\DeclareMathOperator{\divsymb}{div}
\DeclareMathOperator{\tr}{tr}
\DeclareMathOperator{\dvol}{dvol}
\DeclareMathOperator{\Ric}{Ric}
\DeclareMathOperator{\Real}{Re}
\newcommand{\oX}{\overline{X}}
\newcommand{\og}{\overline{g}}
\newcommand{\lp}{\langle}
\newcommand{\rp}{\rangle}
\newcommand{\lv}{\lvert}
\newcommand{\rv}{\rvert}
\newcommand{\mP}{\mathcal{P}}
\newcommand{\bC}{\mathbb{C}}
\newcommand{\bN}{\mathbb{N}}
\newcommand{\bR}{\mathbb{R}}
\def\sideremark#1{\ifvmode\leavevmode\fi\vadjust{\vbox to0pt{\vss
 \hbox to 0pt{\hskip\hsize\hskip1em
 \vbox{\hsize3cm\tiny\raggedright\pretolerance10000
 \noindent #1\hfill}\hss}\vbox to8pt{\vfil}\vss}}}
\newcommand{\comment}[1]{}
\newtheorem{thm}{Theorem}[section]
\newtheorem{prop}[thm]{Proposition}
\newtheorem{lem}[thm]{Lemma}
\newtheorem{cor}[thm]{Corollary}
\theoremstyle{definition}
\theoremstyle{remark}
\newtheorem{remark}[thm]{Remark}
\numberwithin{equation}{section}
\begin{document}

\title{On fractional GJMS operators}
\author{Jeffrey S. Case}
\thanks{JSC was partially supported by NSF Grant No.\ DMS-1004394}
\address{Department of Mathematics \\ Princeton University \\ Fine Hall, Washington Road \\ Princeton, NJ 08544}
\email{jscase@math.princeton.edu}
\author{Sun-Yung Alice Chang}
\thanks{SYAC was partially supported by NSF Grant No.\ DMS-1104536}
\address{Department of Mathematics \\ Princeton University \\ Fine Hall, Washington Road \\ Princeton, NJ 08544}
\email{chang@math.princeton.edu}
\date{\today}
\keywords{fractional Laplacian; Poincar\'e--Einstein manifold; conformally covariant operator; smooth metric measure space}
\subjclass[2010]{Primary 58J70; Secondary 35J70, 53A30}
\begin{abstract}
We describe a new interpretation of the fractional GJMS operators as generalized Dirichlet-to-Neumann operators associated to weighted GJMS operators on naturally associated smooth metric measure spaces.  This gives a geometric interpretation of the Caffarelli--Silvestre extension for $(-\Delta)^\gamma$ when $\gamma\in(0,1)$, and both a geometric interpretation and a curved analogue of the higher order extension found by R.\ Yang for $(-\Delta)^\gamma$ when $\gamma>1$.  We give three applications of this correspondence.  First, we exhibit some energy identities for the fractional GJMS operators in terms of energies in the compactified Poincar\'e--Einstein manifold, including an interpretation as a renormalized energy.  Second, for $\gamma\in(1,2)$, we show that if the scalar curvature and the fractional $Q$-curvature $Q_{2\gamma}$ of the boundary are nonnegative, then the fractional GJMS operator $P_{2\gamma}$ is nonnegative.  Third, by assuming additionally that $Q_{2\gamma}$ is not identically zero, we show that $P_{2\gamma}$ satisfies a strong maximum principle.
\end{abstract}
\maketitle


\section{Introduction}
\label{sec:intro}

There has recently been a great deal of interest in fractional-order nonlocal operators and scalar invariants related to them; e.g.\ \cite{CaffarelliSilvestre2007,ChangGonzalez2011,GonzalezQing2010}.  In this article we are primarily interested in the fractional-order conformally covariant powers of the Laplacian (henceforth fractional GJMS operators) and their associated fractional $Q$-curvatures as introduced by Graham and Zworski~\cite{GrahamZworski2003}.  Unlike familiar geometric objects such as the scalar curvature, the mean curvature, and the conformal Laplacian, it is still not entirely clear what these objects are geometrically.  In this paper, we provide a geometric interpretation of the fractional GJMS operators.  Specifically, we exhibit a natural correspondence between the fractional GJMS operators on the boundary of a Poincar\'e--Einstein manifold and weighted GJMS operators on a natural smooth metric measure space obtained by compactifying the Poincar\'e--Einstein manifold.  This generalizes the relationship between the conformal half-Laplacian and the conformal Laplacian which was studied in depth by Escobar~\cite{Escobar1992a,Escobar1992}.  We give three applications of this correspondence.  First, we show that the energies of the fractional GJMS operators can be naturally interpreted as renormalized energies of a natural second-order operator, generalizing a result of R.\ Yang~\cite{Yang2013} in the flat case.  Second, we give sufficient conditions in terms of the conformal boundary for the fractional GJMS operators with $\gamma\leq2$ to be nonnegative.  Third, we give sufficient conditions in terms of the conformal boundary for the fractional GJMS operators to satisfy a strong maximum principle.  The latter two applications partially arise by exploiting the relationship between the fractional GJMS operators $P_{2\gamma}$ of order $2\gamma\in(2,4)$ and the weighted Paneitz operator and considering the weighted analogues of positivity results known for the Paneitz operator~\cite{Gursky1999,GurskyMalchiodi2014,XuYang2001}. Third, we use the relationship between the fractional GJMS operators $P_{2\gamma}$ of order $2\gamma\in(2,4)$ and the weighted Paneitz operator to give sufficient conditions for the positivity of and strong maximum principles for $P_{2\gamma}$ which are analogous to conditions known for the Paneitz operator~\cite{Gursky1999,GurskyMalchiodi2014,XuYang2001}.

The simplest case of a fractional-order conformally covariant power of the Laplacian arises in connection to the operator $B=\frac{\partial}{\partial\eta}+\frac{n-1}{2n}H$ defined on the boundary $(M^n,h)$ of a compact Riemannian manifold-with-boundary $(\oX^{n+1},g)$, where $\frac{\partial}{\partial\eta}$ is the outward pointing normal and $H$ is the mean curvature of the boundary $(M^n,h)$.  This operator is conformally covariant in the sense that if $\sigma\in C^\infty(\oX)$, then the operator $\hat B$ defined in terms of the conformally rescaled metric $\hat g:=e^{2\sigma}g$ is related to $B$ by the conjugation
\[ \hat B = e^{-\frac{n+1}{2}\sigma}\circ B\circ e^{\frac{n-1}{2}\sigma}, \]
where the right hand side is to be interpreted as the pre- and post-composition of $B$ with multiplication operators.  As pointed out and put to great effect by Escobar~\cite{Escobar1992a,Escobar1992} in his study of the Yamabe Problem on manifolds with boundary, the operator $B$ should be regarded as the boundary operator associated to the conformal Laplacian $L_2:=-\Delta_{g}+\frac{n-1}{4n}R_{g}$ defined in the interior $(\oX^{n+1},g)$.  The conformal Laplacian is also conformally covariant; in terms of $\hat g=e^{2\sigma}g$ we have that
\[ \hat L_2 = e^{-\frac{n+3}{2}\sigma}\circ L_2\circ e^{\frac{n-1}{2}\sigma} . \]
The conformal covariance of both operators implies that the sums
\begin{multline}
\label{eqn:escobar}
\int_{X} U\,L_2U\,\dvol_{g} + \int_M f\,Bf\,\dvol_h \\ = \int_{X} \left[\lv\nabla U\rv_{g}^2 + \frac{n-1}{4n}R_{g}U^2\right]\dvol_{g} + \frac{n-1}{2n}\int_M Hf^2\,\dvol_h
\end{multline}
are conformally covariant for any $U\in W^{1,2}(X)$ with $f=U\rv_M$; here the equality follows by integration by parts, and is the reason we declare $B$ to be the boundary operator associated to $L_2$.  In particular, solving for the unique extension $U$ of $f$ such that $L_2U=0$ yields the energy identity
\begin{equation}
\label{eqn:escobar_energy}
\int_M f\,Bf\,\dvol_h = \int_{X} \left[\lv\nabla U\rv_{g}^2 + \frac{n-1}{4n}R_{g}U^2\right]\dvol_{g} + \frac{n-1}{2n}\int_M Hf^2\,\dvol_h .
\end{equation}

In the flat case $(\bR^n,dx^2)$ as the boundary of $(\bR_+^{n+1},dx^2\oplus dy^2)$, the above-described procedure is the classical method for defining $(-\Delta)^{1/2}$.  More precisely, given $f\in C^\infty(\bR^n)$, one can define the half-Laplacian by $(-\Delta)^{1/2}f=-\frac{\partial U}{\partial y}$ for $U$ the harmonic function in $(\bR_+^{n+1},dx^2\oplus dy^2)$ such that $U(\cdot,0)=f$; see~\cite{CaffarelliSilvestre2007} for details.  For this reason and due to its conformal covariance, it is thus natural to consider $B$ to be the conformal half-Laplacian (cf.\ \cite{GuillarmouGuillope2007}).

An important feature of the realization of the (conformal) half-Laplacian as the Dirichlet-to-Neumann operator associated to the (conformal) Laplacian is that it allows one to derive estimates for the former operator, which is a nonlocal operator, using well-established techniques developed for the latter operator.  This motivated Caffarelli and Silvestre~\cite{CaffarelliSilvestre2007} to identify the fractional Laplacian $(-\Delta)^\gamma$ as the Dirichlet-to-Neumann operator associated to a second-order degenerate elliptic operator in the interior.  More precisely, they showed that for each $\gamma\in(0,1)$ there is an explicit constant $c_\gamma$ such that for any $f\in C^\infty(\bR^n)$, it holds that
\begin{equation}
\label{eqn:intro_caffarelli_silvestre}
(-\Delta)^\gamma f = c_\gamma \lim_{y\to0} y^{m_0}\frac{\partial U}{\partial y}
\end{equation}
for $m_0=1-2\gamma$ and $U\in C^\infty(\bR_+^{n+1})\cap C^0(\overline{\bR_+^{n+1}})$ such that $U(\cdot,0)=f$ and $\divsymb\left(y^{m_0}\nabla U\right)=0$.  Their analysis is greatly assisted by the curious observation that one can formally think of the condition $\divsymb\left(y^{m_0}\nabla U\right)=0$ as the condition that $U$ be a harmonic function in $\bR^{n+1+m_0}$ which is radially symmetric with respect to the $\bR^{m_0}$ factor.

As shown by Chang and Gonz\'alez~\cite{ChangGonzalez2011}, the Caffarelli--Silvestre extension is a reformulation of the definition of the fractional GJMS operators as introduced by Graham and Zworski~\cite{GrahamZworski2003}.  Given a Poincar\'e--Einstein manifold $(X^{n+1},M^n,g_+)$, Graham and Zworski showed that for all but a finite number of values $\gamma\in(0,n/2)$ one can define, via scattering theory, a formally self-adjoint pseudodifferential operator $P_{2\gamma}$ on the boundary $(M^n,h)$ such that the symbol of $P_{2\gamma}$ is the symbol of $(-\Delta)^\gamma$ and $P_{2\gamma}$ is conformally covariant.  Moreover, in the case $\gamma\in\bN$ the operators $P_{2\gamma}$ agree with the GJMS operators $P_{2k}$ introduced by the eponymous Graham, Jenne, Mason and Sparling~\cite{GJMS1992} via the ambient metric; in particular, $P_{2k}$ is independent of the Poincar\'e--Einstein fill-in.  For $\gamma\in(0,1)$, Chang and Gonz\'alez~\cite{ChangGonzalez2011} observed that, when written in terms of the compactification $g:=r^2g_+$, where $r$ is the geodesic defining function associated to $h$, the scattering definition of $P_{2\gamma}$ becomes
\begin{equation}
\label{eqn:intro_chang_gonzalez}
P_{2\gamma}f = c_\gamma \lim_{\gamma\to0}r^{m_0}\frac{\partial U}{\partial r}
\end{equation}
for $m_0$ and $c_\gamma$ as in~\eqref{eqn:intro_caffarelli_silvestre} and $U$ the function in $(\oX,g)$ such that $U\rv_M=f$ and
\begin{equation}
\label{eqn:chang_gonzalez_2nd_order}
\divsymb\left(r^{m_0}\nabla U\right) + E(r,m_0)U = 0 .
\end{equation}
While a clear geometric interpretation of the error term $E(r,m_0)$ is not given in~\cite{ChangGonzalez2011}, it was observed that it vanishes in the model case $\left(\bR_+^{n+1},\bR^n,y^{-2}(dx^2\oplus dy^2)\right)$ and that $E(r,0)=\frac{n-1}{4n}R$; i.e.\ the operator on the left hand side of~\eqref{eqn:chang_gonzalez_2nd_order} is the conformal Laplacian when $m_0=0$, recovering the relationship between $L_2$ and $B$ exposed by Escobar~\cite{Escobar1992a,Escobar1992}.  Note that, in comparison to~\eqref{eqn:escobar_energy}, it follows from~\eqref{eqn:intro_chang_gonzalez} and~\eqref{eqn:chang_gonzalez_2nd_order} that
\begin{equation}
\label{eqn:intro_chang_gonzalez_energy}
\int_M f\,P_{2\gamma}f\,\dvol_g = \int_X \left[\lv\nabla U\rv_{\og}^2 + E(r,m_0)U^2\right]r^{m_0}\dvol_{\og}
\end{equation}
for $\gamma\in(0,1)$; that we do not see the analogue of the mean curvature term in~\eqref{eqn:escobar_energy} is due to the assumptions that $(X^{n+1},M^n,g_+)$ is Poincar\'e--Einstein and $r$ is a geodesic defining function (cf.\ \cite{ChangGonzalez2011} and Section~\ref{sec:extension}).

These results still leave some questions remaining.  For example, what happens in the cases $\gamma\in(1,n/2)$?  Is there some geometric interpretation of the operator appearing in~\eqref{eqn:chang_gonzalez_2nd_order}?  Can this explain the formal dimensional interpretation of the parameter $m_0$ used by Caffarelli and Silvestre~\cite{CaffarelliSilvestre2007}?  In this article, we answer these questions by observing that~\eqref{eqn:chang_gonzalez_2nd_order} can be identified as the weighted conformal Laplacian on a canonical smooth metric measure space and that there is a higher order version of the definition of the fractional GJMS operators via extensions, as we now explain.

In the flat case, the first question was answered by R.\ Yang~\cite{Yang2013}.  A key issue to address is that the right hand side of~\eqref{eqn:intro_chang_gonzalez_energy} will always be infinite when $\gamma>1$, and so one must find a new energy identity for the fractional Laplacian of order greater than two.  R.\ Yang overcame this issue by showing that for $k=\lfloor\gamma\rfloor$ and $m_k:=2k+1-2\gamma\in(-1,1)$, one can define the operator $(-\Delta)^\gamma$ on $(\bR^n,dx^2)$ by
\[ (-\Delta)^\gamma f = c_\gamma\lim_{y\to 0} y^{m_k}\frac{\partial}{\partial y}(\Delta_{m_k})^kU , \]
where $\Delta_{m_k}=\Delta + m_ky^{-1}\partial_y$ and $U$ is the extension of $f$ such that $(\Delta_{m_k})^{k+1}U=0$; see Theorem~\ref{thm:yang} below for the precise statement.  The key point is that this definition yields the energy identity
\begin{equation}
\label{eqn:intro_yang}
\int_{\bR^n} f\,(-\Delta)^\gamma f = c_\gamma \int_{\bR_+^{n+1}} \lv\Delta_{m_k}^{(k+1)/2}U\rv^2\,y^{m_k}\dvol ,
\end{equation}
where we declare $\Delta_{m_k}^{(k+1)/2}U=\nabla(\Delta_{m_k})^{k/2}U$ if $k$ is even.

A key property of the operator $\Delta_{m_k}$ is that it is formally self-adjoint with respect to the measure $y^{m_k}\dvol$ on $(\bR_+^{n+1},dx^2\oplus dy^2)$.  Combining the ideas underlying Escobar's energy identity~\eqref{eqn:escobar_energy} and R.\ Yang's energy identity~\eqref{eqn:intro_yang}, it is natural to expect that the curved analogue of the right hand side of~\eqref{eqn:intro_yang} should be the energy associated to a conformally covariant operator of order $k+1$ defined on a smooth metric measure space; i.e.\ a Riemannian manifold with measure.  Meaningful notions of the conformal Laplacian and the Paneitz operator --- conformally covariant operators of order two and four, respectively --- on smooth metric measure spaces were given in~\cite{Case2011t}, and they are precisely the operators which appear in the curved analogue of~\eqref{eqn:intro_yang}.

In the most general sense studied in~\cite{Case2011t}, a smooth metric measure space is a four-tuple $(X^{n+1},g,v^m\dvol,\mu)$ of a Riemannian manifold with (possibly empty) boundary, a nonnegative function $v\in C^\infty(X)$ such that $v^{-1}(0)=\partial X$, and parameters $m\in\bR\cup\{\pm\infty\}$ and $\mu\in\bR$.  Heuristically, such a space should be thought of as the base of the warped product $(X^{n+1}\times F_\mu^m,g\oplus v^2g_F)$ for $(F_\mu^m,g_F)$ the $m$-dimensional simply-connected spaceform with $\Ric(g_F)=\mu g_F$.  In this article we are exclusively interested in the case $\mu=m-1$, so that $(F_\mu^m,g_F)$ is the $m$-dimensional sphere with constant sectional curvature one and the smooth metric measure space $(\bR_+^{n+1},dx^2\oplus dy^2,y^m\dvol,m-1)$ is formally the base of flat Euclidean $(m+n+1)$-dimensional space, precisely the heuristic used by Caffarelli and Silvestre~\cite{CaffarelliSilvestre2007}.

The point of working with smooth metric measure spaces is that they provide a natural geometric perspective on the Poisson equations appearing in the work of Graham and Zworski~\cite{GrahamZworski2003}.  In Section~\ref{sec:extension} we show that if $(X^{n+1},M^n,g_+)$ is a Poincar\'e--Einstein manifold, then for a given $\gamma\in(0,n/2)$, the Poisson equation which gives rise to the operator $P_{2\gamma}$ is precisely the weighted conformal Laplacian of $(X^{n+1},g_+,1^{m_0}\dvol,m_0-1)$ for $m_0=1-2\gamma$.  Since the weighted conformal Laplacian is conformally covariant, this seamlessly allows one to rewrite the Poisson equation in terms of compactified metrics, and in particular identifies the operator appearing in~\eqref{eqn:intro_chang_gonzalez} as the weighted conformal Laplacian of the compactified metric.  Moreover, since smooth metric measure spaces with $m=0$ are Riemannian manifolds, we again recover the connection to Escobar's work on the boundary Yamabe Problem.  One can also define weighted analogues of the GJMS operators~\cite{GJMS1992} which, analogous to the GJMS operators for Einstein manifolds~\cite{FeffermanGraham2012,Gover2006q}, admit factorizations for the smooth metric measure spaces $(X^{n+1},g_+,1^m\dvol,m-1)$ associated to Poincar\'e--Einstein manifolds.  As a consequence, we derive the curved analogue of the extension found by R.\ Yang~\cite{Yang2013} for the fractional GJMS operators $P_{2\gamma}$ for all $\gamma\in(0,n/2)$.  When $\gamma\in(0,1)$, this gives a more geometric interpretation of the result of Chang and Gonz\'alez~\cite{ChangGonzalez2011}; see Theorem~\ref{thm:01_case} for the statement.  When $\gamma\in(1,2)$, a special case of our work is the following result.

\begin{thm}
\label{thm:intro_12_case}
Let $(X^{n+1},M^n,g_+)$ be a Poincar\'e--Einstein manifold, fix a representative $h$ of the conformal boundary, and let $r$ be the geodesic defining function associated to $h$.  Let $\gamma\in(1,2)$, set $m_1=3-2\gamma$ and $s=\frac{n}{2}+\gamma$, and consider the smooth metric measure space
\[ \left(\oX^{n+1},g:=r^2g_+,r^{m_1}\dvol,m_1-1\right) . \]
Given $f\in C^\infty(M)$, the function $U$ is the solution of the boundary value problem
\begin{equation}
\label{eqn:intro_12_case_boundary}
\begin{cases}
L_{4,\phi_1}^{m_1} U = 0, & \quad\text{in $X$}, \\
U = f, & \quad\text{on $M$}, \\
\lim_{r\to0}r^{m_1}\frac{\partial U}{\partial r} = 0
\end{cases}
\end{equation}
if and only if the function $u=r^{n-s}U$ is the solution of the Poisson problem
\begin{equation}
\label{eqn:intro_12_case_poisson}
\begin{cases}
   -\Delta_{g_+}u - s(n-s)u = 0, & \quad\text{in $X$}, \\
   u = Fr^{n-s} + Gr^s, & \quad F,G\in C^\infty(\oX) \\
   F\rv_{r=0} = f .
\end{cases}
\end{equation}
Moreover, the solution $U$ of~\eqref{eqn:intro_12_case_boundary} is such that
\begin{equation}
\label{eqn:intro_12_case_p2gamma}
P_{2\gamma}f = \frac{d_\gamma}{8\gamma(\gamma-1)}\lim_{r\to0} r^{m_1}\frac{\partial}{\partial r}\Delta_{\phi_1}U .
\end{equation}
\end{thm}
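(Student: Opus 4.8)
The plan is to exploit the fact that the weighted GJMS operators factor on the Poincar\'e--Einstein smooth metric measure space, so that the fourth-order boundary value problem~\eqref{eqn:intro_12_case_boundary} can be reduced to the second-order Poisson problem~\eqref{eqn:intro_12_case_poisson} studied by Graham and Zworski. Concretely, I would first recall (from Section~\ref{sec:extension}) that on the smooth metric measure space $(X^{n+1},g_+,1^{m_0}\dvol,m_0-1)$ with $m_0=1-2\gamma$, the weighted conformal Laplacian $L_{2,\phi}^{m_0}$ acting on the right density weight coincides, up to the spectral shift, with $-\Delta_{g_+}-s(n-s)$; and that $L_{4,\phi}^m$ factors as a composition of two such second-order operators with shifted parameters, paralleling the Einstein factorization $P_{2k}=\prod_{j}\bigl(-\Delta_{g_+}-(s-j)(n-s+j)\bigr)$-type identities. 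For $\gamma\in(1,2)$, set $k=1$, so $L_{4,\phi_1}^{m_1}$ should factor (in the appropriate conformal density bundles) as $L_{2,\psi}^{m_{\ast}}\circ L_{2,\phi_1}^{m_1}$ where the parameter shift $m_1\mapsto m_{\ast}=m_1-2$ corresponds exactly to passing from $s$ to $s-1$. Hence a function killed by $L_{4,\phi_1}^{m_1}$ (with the stated Neumann-type condition at $r=0$) is, after the weight conversion $u=r^{n-s}U$, a function $u$ with $(-\Delta_{g_+}-s(n-s))u$ lying in the kernel of the lower operator, and the boundary conditions force that inner function to vanish; this is where the asymptotic expansion $u=Fr^{n-s}+Gr^s$ and the indicial analysis at $r=0$ enter.

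Second, I would carry out the equivalence of boundary conditions carefully. The content is that (i) smoothness of $U$ up to $M$ with $U|_M=f$ corresponds to $u$ having the Graham--Zworski expansion with $F|_{r=0}=f$, using $U=r^{s-n}u$ and $n-s=-\gamma-n/2+n=n/2-\gamma<0$ — so the $r^{n-s}$ term is the leading one and $r^{s}=r^{n/2+\gamma}$ contributes the higher-order piece, matching $U = f + \dots + (\text{coefficient})\,r^{2\gamma}+\dots$; (ii) the condition $\lim_{r\to0}r^{m_1}\partial_r U=0$ eliminates the obstruction that would otherwise obstruct solvability of~\eqref{eqn:intro_12_case_boundary}, equivalently it is the condition that there be no $\log$ term / that the solution $U$ be the distinguished one. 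This second-order reduction is exactly the $k=0$ case already proven for $\gamma\in(0,1)$ (Theorem~\ref{thm:01_case}), now applied with the shifted spectral parameter coming from the factorization; I would phrase it as: $U$ solves~\eqref{eqn:intro_12_case_boundary} iff $v:=L_{2,\phi_1}^{m_1}U$ solves a homogeneous problem of the type already understood, and the boundary data forces $v\equiv0$, so $L_{2,\phi_1}^{m_1}U=0$, which by the $\gamma\in(0,1)$-type analysis (with parameter $s$) is equivalent to~\eqref{eqn:intro_12_case_poisson}.

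Third, for the formula~\eqref{eqn:intro_12_case_p2gamma}, I would compute the weighted Neumann data. Since $L_{4,\phi_1}^{m_1}U=L_{2,\psi}^{m_{\ast}}\Delta_{\phi_1}U$ (schematically, $\Delta_{\phi_1}U$ being the weighted Laplacian of $U$), the quantity $\Delta_{\phi_1}U$ itself solves a second-order problem whose boundary Neumann data is, by Chang--Gonz\'alez~\eqref{eqn:intro_chang_gonzalez}–\eqref{eqn:intro_chang_gonzalez_energy}, a constant multiple of $P_{2\gamma}f$. Precisely, $\Delta_{\phi_1}U$ plays the role of (a multiple of) the lower extension, and its scattering Neumann derivative $\lim_{r\to0}r^{m_1}\partial_r(\Delta_{\phi_1}U)$ is $c_\gamma^{-1}$-proportional to $P_{2\gamma}f$; tracking the normalization constants $c_\gamma$, $d_\gamma$, and the combinatorial factor from the first-order factorization yields the coefficient $\tfrac{d_\gamma}{8\gamma(\gamma-1)}$. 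Here the factor $8\gamma(\gamma-1)$ should emerge as $(2m_1\text{-type shift})\cdot(\dots)$ — concretely the product of the two "reduced-order" constants, one of which degenerates to a pure number because $k=1$. I would verify the constant by checking the flat model $(\bR_+^{n+1},\bR^n,y^{-2}(dx^2\oplus dy^2))$ against R.\ Yang's~\eqref{eqn:intro_yang} and Theorem~\ref{thm:yang}, where all curvature terms drop out and the identity reduces to an ODE computation in $y$.

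The main obstacle I expect is the bookkeeping in the factorization step: establishing that $L_{4,\phi_1}^{m_1}$ on the relevant conformal density line bundle really does factor as an \emph{exact} composition of two weighted conformal Laplacians with the correct shifted parameters and the correct scalar prefactor, rather than merely up to lower-order error. This is the weighted analogue of the Gover-type factorization $P_{2k}$ for Einstein metrics, and getting the conformal weights, the parameter $\mu=m-1$ versus its shift, and the $s\leftrightarrow n-s$ symmetry all consistent is the delicate part; once that algebraic identity is in hand, both the equivalence of the two boundary value problems and the Dirichlet-to-Neumann computation follow by iterating the already-established $\gamma\in(0,1)$ case.
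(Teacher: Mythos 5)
Your overall strategy agrees with the paper's: use conformal covariance of $L_{4,\phi_1}^{m_1}$ to transfer the problem to $(X,g_+,1^{m_1}\dvol,m_1-1)$, invoke the weighted GJMS factorization (Theorem~\ref{thm:weighted_gjms_factorization}) to reduce the fourth-order equation to an iterated Poisson equation, match the Graham--Zworski expansion to the asymptotics of $U$, and then read off the weighted Neumann data.  This is the right route. However, there are two concrete errors that would need to be repaired.

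First, the bookkeeping in the factorization is off in a way that propagates into your argument for the Neumann formula.  The correct factorization (from~\eqref{eqn:weighted_gjms_factorization}, or Lemma~\ref{lem:use_factorization}) is
\[
\left(L_{4,\phi_1}^{m_1}\right)_+ = \Bigl(-\Delta_{g_+}-(s-2)(n-s+2)\Bigr)\circ\Bigl(-\Delta_{g_+}-s(n-s)\Bigr),
\]
so the spectral shift is $s\mapsto s-2$ (not $s\mapsto s-1$), and in compactified form the \emph{inner} factor is $L_{2,\phi_0}^{m_0}$ with $m_0=m_1-2=1-2\gamma$, not $L_{2,\phi_1}^{m_1}$.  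Thus it is $U$ itself that is annihilated by the inner factor: $L_{2,\phi_0}^{m_0}U=0$.  In particular, your claim in the third paragraph that ``$\Delta_{\phi_1}U$ plays the role of the lower extension'' is not right: $\Delta_{\phi_1}U$ is not in the kernel of any natural second-order operator, and the function that the Chang--Gonz\'alez machinery directly controls is $U$, not $\Delta_{\phi_1}U$.  If you try to run the Chang--Gonz\'alez Neumann-data argument on $\Delta_{\phi_1}U$ directly, the logic does not close.

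Second, and more importantly, you are missing the step that converts the Neumann data of $r^{-1}\partial_rU$ into the Neumann data of $\Delta_{\phi_1}U$.  From the Poisson expansion one obtains
\[
P_{2\gamma}f=\frac{d_\gamma}{4\gamma(\gamma-1)}\lim_{r\to0}r^{m_1}\frac{\partial}{\partial r}\Bigl(r^{-1}\frac{\partial U}{\partial r}\Bigr),
\]
and the identity $L_{2,\phi_0}^{m_0}U=0$ gives, upon using $\Delta_{\phi_1}=\Delta_{\phi_0}+2r^{-1}\partial_r$,
\[
-\Delta_{\phi_1}U = -2r^{-1}\frac{\partial U}{\partial r}-\frac{m_0+n-1}{2}J_{\phi_0}^{m_0}U.
\]
To conclude $\lim r^{m_1}\partial_r\Delta_{\phi_1}U=2\lim r^{m_1}\partial_r(r^{-1}\partial_rU)$ one must check that the $J_{\phi_0}^{m_0}U$ term does not contribute, which requires the asymptotic estimate $\partial_rJ_{\phi_0}^{m_0}=O(r)$.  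This is the paper's key technical lemma (proved via evenness of the metric expansion $h_r=h+h_{(2)}r^2+\cdots$), and it is not merely ``bookkeeping in the factorization'': your proposal never flags the need to control the weighted Schouten scalar at this step.  The factor $\tfrac{d_\gamma}{8\gamma(\gamma-1)}$ then comes from the $\rho^{2\gamma}$ coefficient in the expansion~\eqref{eqn:12_expansion} together with this factor of~$2$, rather than from a ``product of reduced-order constants'' as you suggest; checking the flat model is a sanity check, not a derivation.
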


Here $d_\gamma$ is an explicit constant given in~\eqref{eqn:scattering_definition} and $L_{2,\phi}^m$ and $L_{4,\phi}^m$ are the weighted conformal Laplacian and the weighted Paneitz operator, respectively, introduced in~\cite{Case2011t}; see Section~\ref{sec:smms} for their definitions.  Theorem~\ref{thm:12_case} is a more general result which allows great freedom in the choice of defining function.  Theorem~\ref{thm:intro_12_case} has two useful consequences.  First, it shows that solutions to the fourth order boundary value problem~\eqref{eqn:intro_12_case_boundary} are in one-to-one correspondence with solutions to the second order Poisson equation~\eqref{eqn:intro_12_case_poisson}.  Second, the formula~\eqref{eqn:intro_12_case_p2gamma} readily allows one to derive an energy identity for $P_{2\gamma}$ in terms of the energy of the weighted Paneitz operator which generalizes~\eqref{eqn:intro_yang}; see Corollary~\ref{cor:12_inequality} for details.

We expect that this new geometric relationship between fractional GJMS operators on boundaries of Poincar\'e--Einstein manifolds and weighted GJMS operators in their compactifications will lead to many new insights into the nature of the fractional GJMS operators.  In this article we give three such applications.

The first application is again a curved analogue of an observation made by R.\ Yang~\cite{Yang2013}.  For $\gamma>1$, the right hand side of~\eqref{eqn:intro_chang_gonzalez_energy} is infinite.  However, R.\ Yang showed that the energy of the fractional Laplacian $(-\Delta)^\gamma$ can be recovered as a renormalization of the right hand side of~\eqref{eqn:intro_chang_gonzalez_energy}.  His proof immediately carries over to the curved setting using the weighted conformal Laplacian and the weighted Paneitz operator.  This inspired Theorem~\ref{thm:12_renormalized_energy}, though the proof we present is greatly simplified by using the asymptotics of solutions to the Poisson equation~\eqref{eqn:poisson_equation}; for details, see Section~\ref{sec:energy}.

The second application is to give natural sufficient conditions for the positivity of the fractional GJMS operators of order $2\gamma\in(2,4)$.  For the Paneitz operator, it is known through work of Gursky~\cite{Gursky1999} (in dimension four), Xu and P.\ Yang~\cite{XuYang2001} (in dimensions six and larger) and Gursky and Malchiodi~\cite{GurskyMalchiodi2014} (in dimensions at least five) that if a Riemannian manifold has nonnegative scalar curvature and nonnegative (fourth-order) $Q$-curvature, then the Paneitz operator is nonnegative, and moreover, there is rigidity when the kernel of the Paneitz operator is nontrivial.  Here we establish the corresponding result for $P_{2\gamma}$ with $\gamma\in(1,2)$ in terms of the scalar curvature and the fractional $Q$-curvature $Q_{2\gamma}$ of the boundary.

\begin{thm}
\label{thm:positive}
Let $(X^{n+1},M^n,g_+)$ be a Poincar\'e--Einstein manifold, let $\gamma\in(1,2)$ if $n\geq 4$ and let $\gamma\in(1,3/2]$ if $n=3$.  Suppose that there is a representative of the conformal boundary with nonnegative scalar curvature and nonnegative fractional $Q$-curvature $Q_{2\gamma}$.  Then $P_{2\gamma}\geq0$.  Moreover, $\ker P_{2\gamma}\not=\{0\}$ if and only if $Q_{2\gamma}\equiv0$ or $n=2\gamma=3$, in which case $\ker P_{2\gamma}=\bR$ consists of the constant functions.
\end{thm}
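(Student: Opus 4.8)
The plan is to use the extension characterization from Theorem~\ref{thm:intro_12_case} (more precisely its general form Theorem~\ref{thm:12_case}) to reduce positivity of the nonlocal operator $P_{2\gamma}$ to positivity of the weighted Paneitz operator $L_{4,\phi_1}^{m_1}$ on the compactified smooth metric measure space $(\oX^{n+1},g:=r^2g_+,r^{m_1}\dvol,m_1-1)$. First I would record the energy identity that follows from~\eqref{eqn:intro_12_case_p2gamma}: for $f\in C^\infty(M)$ and $U$ the solution of the fourth-order boundary value problem~\eqref{eqn:intro_12_case_boundary}, integration by parts against the weighted measure gives
\[
\int_M f\,P_{2\gamma}f\,\dvol_h \;=\; \frac{d_\gamma}{8\gamma(\gamma-1)}\,\mathcal{Q}_{4,\phi_1}^{m_1}(U,U),
\]
where $\mathcal{Q}_{4,\phi_1}^{m_1}$ is the quadratic form of the weighted Paneitz operator. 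The prefactor $d_\gamma/\bigl(8\gamma(\gamma-1)\bigr)$ is positive for $\gamma\in(1,2)$, so it suffices to show (i) $\mathcal{Q}_{4,\phi_1}^{m_1}(U,U)\geq 0$ for all such extensions $U$, and (ii) to analyze the equality case.

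For step (i), the key is that the weighted Paneitz quadratic form on a smooth metric measure space admits — exactly as in the Riemannian case treated by Gursky, Xu--P.~Yang, and Gursky--Malchiodi — a pointwise algebraic rewriting of the form
\[
\mathcal{Q}_{4,\phi}^{m}(U,U) \;=\; \int \Bigl[ \lv \Delta_\phi U\rv^2 - a_{m,n}\,\mathsf{P}_\phi(\nabla U,\nabla U) + (\text{curvature terms})\,U^2 \Bigr]\, v^{m}\dvol,
\]
so that a weighted Bochner/refined-Kato argument, together with the hypotheses that the boundary has nonnegative scalar curvature and nonnegative $Q_{2\gamma}$, forces integrand nonnegativity. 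The translation of "nonnegative scalar curvature of the boundary" and "nonnegative $Q_{2\gamma}$" into curvature positivity on the interior SMMS is exactly the point where the Poincaré--Einstein structure is used: under the Einstein condition the weighted Schouten tensor and weighted $Q$-curvature of $(\oX,g,r^{m_1}\dvol,m_1-1)$ are controlled by boundary data, so the hypotheses propagate inward. This is the step I expect to be the main obstacle: one must verify that the dimensional restrictions ($\gamma\in(1,2)$ for $n\geq 4$, $\gamma\in(1,3/2]$ for $n=3$) are precisely what is needed for the relevant coefficient $a_{m_1,n}$ and the algebraic inequality to close — this mirrors the way the Riemannian Paneitz positivity proofs split into the $n=4$, $n=5$, and $n\geq 6$ cases, and the $n=3$ restriction to $\gamma\leq 3/2$ should come from keeping $m_1=3-2\gamma\geq 0$ so that the "dimension" $m_1+n$ stays above the threshold where the inequality degenerates.

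For the equality analysis, suppose $P_{2\gamma}f = 0$ or, more generally, $\langle f,P_{2\gamma}f\rangle = 0$. Then $\mathcal{Q}_{4,\phi_1}^{m_1}(U,U)=0$, and the pointwise inequalities used in step (i) must all be equalities; tracing through the Bochner term forces $\nabla^2_\phi U$ proportional to the metric, hence (using the boundary conditions in~\eqref{eqn:intro_12_case_boundary} and that $U$ extends $f$) $U$ is constant, so $f$ is constant. Conversely, a constant $f$ lies in $\ker P_{2\gamma}$ precisely when $Q_{2\gamma}\equiv 0$, since $P_{2\gamma}(1) = \tfrac{n-2\gamma}{2}Q_{2\gamma}$ (the standard normalization of the fractional $Q$-curvature), which vanishes identically iff $Q_{2\gamma}\equiv 0$ or $n=2\gamma$; the latter, combined with $\gamma\in(1,3/2]$ and $n=3$, gives exactly the exceptional case $n=2\gamma=3$. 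Assembling these gives $\ker P_{2\gamma}=\bR$ in the stated cases and $\ker P_{2\gamma}=\{0\}$ otherwise. I would present step (i) as a lemma about weighted Paneitz positivity on SMMS satisfying an interior curvature condition, then deduce the theorem by checking that compactified Poincaré--Einstein manifolds with the stated boundary hypotheses satisfy that condition.
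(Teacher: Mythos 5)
Your overall strategy — use the extension theorem to write $\langle f,P_{2\gamma}f\rangle$ as a weighted Paneitz energy and run a Gursky/Gursky--Malchiodi-style weighted Bochner argument — is the right one, and the equality analysis is essentially correct. But the proposal has a genuine gap at exactly the place you flag as "the main obstacle," and as stated the plan would not close.

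The issue is your choice of compactification. Working with the geodesic compactification $g=r^2g_+$ and the energy identity of Corollary~\ref{cor:12_inequality} with $\Phi=0$, you get
\[
\frac{8\gamma(\gamma-1)}{d_\gamma}\int_M f\,P_{2\gamma}f\,\dvol_h
= \int_X \Bigl[(\Delta_{\phi_1}U)^2 - (4P-(m_1+n-1)J_{\phi_1}^{m_1}g)(\nabla U,\nabla U) + \tfrac{m_1+n-3}{2}Q_{\phi_1}^{m_1}U^2\Bigr]r^{m_1}\dvol_g,
\]
and neither $Q_{2\gamma}$ nor $R_h$ appears explicitly; the hypotheses on the boundary are not localized anywhere in the integrand, and the zeroth-order coefficient $Q_{\phi_1}^{m_1}$ has no definite sign. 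The paper avoids this by replacing $r$ with the \emph{adapted defining function} $y$ of Lemma~\ref{lem:12_rhostar}, chosen so that the compactified SMMS $(X,y^2g_+,y^{m_1}\dvol,m_1-1)$ satisfies $J_{\phi_0}^{m_0}=0$ and $Q_{\phi_1}^{m_1}=0$. With this choice the $U^2$ term disappears from the interior energy, the asymptotic expansion of $y$ produces a boundary term proportional to $\int Q_{2\gamma}f^2$ (see~\eqref{eqn:rhostar_energy_identity}), and — crucially — Proposition~\ref{prop:positive_scalar_curvature} shows the adapted metric has positive scalar curvature throughout $X$, which is what makes the Bochner/Cauchy--Schwarz manipulations in Proposition~\ref{prop:gursky} close. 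You would need to either reproduce this choice or find another mechanism to convert the boundary hypotheses into a pointwise curvature condition in the interior.

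A second missing step: Proposition~\ref{prop:gursky} requires $R_h>0$, not merely $R_h\geq 0$. The paper gets from $R_h\geq 0$ to $R_h>0$ in two stages: first, if $R_h\equiv 0$ then Corollary~\ref{cor:signs_of_r_and_q2} (which itself relies on the positivity of the scalar curvature of the adapted metric and on unique continuation for Einstein metrics) produces a point where $Q_{2\gamma}<0$, contradicting the hypothesis; second, with $R_h$ semi-positive, Lemma~\ref{lem:12_maximum} applies the degenerate Hopf Lemma of Proposition~\ref{prop:hopf} to the weighted Schouten scalar of the adapted metric to rule out interior boundary zeros and conclude $R_h>0$. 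Neither ingredient appears in your outline, and both are needed before the Bochner argument can be run. Finally, on the dimensional restriction: the relevant inequality in Proposition~\ref{prop:gursky} needs $m_1+n-3\geq 0$, equivalently $\gamma\leq n/2$. For $n=3$ this coincides with $m_1\geq 0$, which is the heuristic you gave, but for $n\geq 4$ the constraint $m_1\in(-1,1)$ already gives $m_1+n-3>0$, so the split in cases is not about the sign of $m_1$ per se.
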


Note in particular that this result applies to the critical third-order operator $P_3$ in the $3+1$ setting of Poincar\'e--Einstein manifolds.

The third application is also a fractional-order analogue of work of Gursky and Malchiodi~\cite{GurskyMalchiodi2014}.  One difficulty in studying questions involving the Paneitz operator is that, being a fourth-order operator, it does not in general satisfy a maximum principle.  Gursky and Malchiodi showed that when the underlying metric has nonnegative scalar curvature and semi-positive $Q$-curvature --- i.e.\ $Q\geq0$ and $Q\not\equiv0$ --- the Paneitz operator does satisfy a strong maximum principle.  By adapting the ideas from~\cite{GurskyMalchiodi2014}, we show that the fractional GJMS operators $P_{2\gamma}$ for $\gamma\in(1,2)$ likewise satisfy a strong maximum principle when the representative $h$ of the conformal boundary has nonnegative scalar curvature and $Q_{2\gamma}$ semi-positive.

\begin{thm}
\label{thm:maximum}
Let $(X^{n+1},M^n,g_+)$ be a Poincar\'e--Einstein manifold and suppose that there is a representative $h$ for the conformal boundary with scalar curvature $R\geq0$ and semi-positive fractional $Q$-curvature $Q_{2\gamma}$ for some $1<\gamma<\min\{2,n/2\}$ fixed.  Then for any $f\in C^\infty(M)$ such that $P_{2\gamma}f\geq0$, either $f>0$ or $f\equiv0$.  Moreover, if $f>0$, then the representative $f^{\frac{4}{n-2\gamma}}h$ of the conformal boundary has positive scalar curvature and nonnegative fractional $Q$-curvature $Q_{2\gamma}$.
\end{thm}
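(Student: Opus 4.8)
The plan is to combine the extension characterization of $P_{2\gamma}$ with the factorization of the weighted Paneitz operator, thereby reducing the strong maximum principle to a pair of second-order maximum principles of Escobar type. Throughout I may assume that $P_{2\gamma}>0$ with trivial kernel: this is precisely Theorem~\ref{thm:positive} in the semi-positive case (and $n\not=2\gamma$ under the present hypotheses, so the exceptional possibility $n=2\gamma=3$ does not arise). In particular $f$ is uniquely determined by $\rho:=P_{2\gamma}f\geq0$, and the whole content is a sign statement about $f$.

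First I would set up the reduction. By Theorem~\ref{thm:12_case} (the general form of Theorem~\ref{thm:intro_12_case}), the datum $f$ with $P_{2\gamma}f=\rho$ corresponds to the solution $U$ of the fourth-order boundary value problem~\eqref{eqn:intro_12_case_boundary}; and since $(X^{n+1},g_+,1^{m_1}\dvol,m_1-1)$ is ``Einstein-like'' (as $g_+$ is Einstein), the weighted Paneitz operator factors, after conjugation by powers of the defining function, as a composition $L_{4,\phi_1}^{m_1}=\bigl(L_{2,\phi_1}^{m_1}-k_1\bigr)\circ\bigl(L_{2,\phi_1}^{m_1}-k_2\bigr)$ of two shifted weighted conformal Laplacians of the compactification $(\oX,g,r^{m_1}\dvol,m_1-1)$, each a genuinely degenerate-elliptic operator with the admissible weight $r^{m_1}$, $|m_1|<1$. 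Setting $V:=\Delta_{\phi_1}U$ — which, modulo terms irrelevant to the boundary limit, is the inner factor applied to $U$ — the problem becomes a coupled pair: the outer equation $(L_{2,\phi_1}^{m_1}-k_1)V=0$ with weighted Neumann datum $\lim_{r\to0}r^{m_1}\partial_r V$ a positive multiple of $\rho$ (by~\eqref{eqn:intro_12_case_p2gamma}), and the inner equation $(L_{2,\phi_1}^{m_1}-k_2)U=V$ with $U|_M=f$ and $\lim_{r\to0}r^{m_1}\partial_r U=0$, the Dirichlet datum $V|_M$ being the image of $f$ under the tangential part of the inner factor, an operator of order $2(\gamma-1)$.

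Next comes the sign propagation, run twice. The operators $L_{2,\phi_1}^{m_1}-k_i$ are weighted Schr\"odinger operators whose potentials are the weighted scalar curvatures of the compactification shifted by $-k_i$; using the freedom in the choice of geodesic defining function afforded by Theorem~\ref{thm:12_case} together with the hypothesis $R\geq0$ on the conformal boundary, I would show — this parallels the positivity computation behind Theorem~\ref{thm:positive} — that both potentials are nonnegative, and it is exactly here that the restriction $\gamma<\min\{2,n/2\}$ is forced, the less favorable of the two shifts otherwise spoiling the sign. Granting this, each factor satisfies the weak and strong maximum principles and a Hopf boundary-point lemma adapted to the weight $r^{m_1}$. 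An Escobar-type argument for the outer factor — its maximum principle, the Hopf lemma, and the sign of the zeroth-order (Robin-type) boundary contribution coming from $R\geq0$, combined with $\lim r^{m_1}\partial_r V\propto\rho\geq0$ — forces $V$ to have a fixed sign in $X$; feeding that sign into the inner factor, the same argument with the homogeneous Neumann condition $\lim r^{m_1}\partial_r U=0$ yields $U\geq0$ in $X$, hence $f=U|_M\geq0$. For the strict alternative: if $f\geq0$ with $f(x_0)=0$ at some $x_0\in M$, then $U\geq0$ attains the minimum value $0$ at the boundary point $x_0$, so the strong maximum principle and Hopf lemma for the inner factor give either $U\equiv0$ or $\lim_{r\to0}r^{m_1}\partial_r U<0$ near $x_0$; the latter, traced back through $V$ and the boundary relations, is incompatible with $\rho\geq0$ unless $V$ and $\rho$ degenerate entirely, a degeneration excluded by $Q_{2\gamma}\not\equiv0$ (the sole use of semi-positivity, playing exactly the role $Q\not\equiv0$ plays in~\cite{GurskyMalchiodi2014}). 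Thus $U\equiv0$, $f\equiv0$, and combined with the previous step $f>0$ or $f\equiv0$. Finally, if $f>0$ set $\hat h=f^{4/(n-2\gamma)}h$; conformal covariance of $P_{2\gamma}$ and the normalization $Q_{2\gamma}=\tfrac{2}{n-2\gamma}P_{2\gamma}1$ give $\widehat Q_{2\gamma}=\tfrac{2}{n-2\gamma}f^{-(n+2\gamma)/(n-2\gamma)}P_{2\gamma}f\geq0$, while $R_{\hat h}>0$ follows because $f>0$ forces $U>0$ in $X$ by the strong maximum principle, which propagates to strict positivity of the associated second-order energy and hence of the first eigenvalue of the conformal Laplacian of $\hat h$, exactly as in the second-order case $\gamma=\tfrac12$ studied by Escobar~\cite{Escobar1992a,Escobar1992}.

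The main obstacle I anticipate is the step in the third paragraph: showing that \emph{both} factors of $L_{4,\phi_1}^{m_1}$ have nonnegative potential under the sole hypothesis $R\geq0$ on the conformal boundary. This requires a careful, possibly $f$-adapted choice of defining function and is precisely where the range of admissible $\gamma$ gets pinned down; it is the fourth-order analogue of the most delicate part of~\cite{GurskyMalchiodi2014,XuYang2001} and of the proof of Theorem~\ref{thm:positive}.
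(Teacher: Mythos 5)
Your proposal takes a genuinely different route from the paper's, and unfortunately it has a structural flaw that I do not think can be repaired along the lines you sketch.

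The central difficulty is the factorization-and-sign-propagation scheme. First, the factorization you write, $L_{4,\phi_1}^{m_1}=\bigl(L_{2,\phi_1}^{m_1}-k_1\bigr)\circ\bigl(L_{2,\phi_1}^{m_1}-k_2\bigr)$ with both factors built from the \emph{same} parameter $m_1$, is not the one the paper has available. Lemma~\ref{lem:use_factorization} gives $L_{4,\phi_1}^{m_1}=L_{2,\phi}^{m_1-2}\circ L_{2,\phi}^{m_1+2}$, where each factor is the weighted conformal Laplacian of a smooth metric measure space with a \emph{different} dimensional parameter (and hence a different weighted Laplacian $\Delta_\phi$), not a single operator shifted by a scalar. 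Second, and more seriously, this factorization \emph{degenerates} on the solution you want to analyze: the inner factor $L_{2,\phi}^{m_1-2}=L_{2,\phi_0}^{m_0}$ is, up to conjugation, the Poisson operator $-\Delta_{g_+}-s(n-s)$, and the content of the proof of Theorem~\ref{thm:12_case} is that the solution $U$ of the fourth-order boundary value problem~\eqref{eqn:intro_12_case_boundary} \emph{already satisfies} $L_{2,\phi_0}^{m_0}U=0$. So if you set $V$ to be the inner factor applied to $U$, you get $V\equiv0$, and the ``outer equation'' carries no information. The quantity $V:=\Delta_{\phi_1}U$ is not the inner factor applied to $U$; from~\eqref{eqn:12_case_Delta-to-L} it is a first-order-in-$U$ expression $2\rho^{-1}\partial_\rho U+\tfrac{m_0+n-1}{2}J_{\phi_0}^{m_0}U$, and it does not satisfy a self-contained second-order equation. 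Third, even if one had a clean second-order equation for some $V$, the weighted Neumann datum $\lim r^{m_1}\partial_r V\propto P_{2\gamma}f\geq0$ alone cannot determine the sign of $V$: the Dirichlet datum $V\rv_M$ is an unknown expression in $f$, which is precisely what you are trying to control, and the Hopf lemma with a one-signed Neumann condition at best rules out an interior positive maximum, which is far from forcing $U\geq0$ through the inner equation.

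For comparison, the paper's proof is a continuity argument in the conformal factor, in the spirit of Gursky--Malchiodi: set $f_\lambda=1-\lambda+\lambda f$, observe that conformal covariance makes $Q_{2\gamma}$ of $h_\lambda:=f_\lambda^{4/(n-2\gamma)}h$ semi-positive for $\lambda\in[0,\lambda_0)$, invoke Lemma~\ref{lem:12_maximum} (which says that $R_h\geq0$, $Q_{2\gamma}\geq0$ with one semi-positive forces $R_h>0$) to keep $R_{h_\lambda}>0$ along the path, and at the critical parameter $\lambda_0$ extract the differential inequality $\Delta f_{\lambda_0}\leq\tfrac{n-2\gamma}{4(n-1)}R_h f_{\lambda_0}$ on $M$, to which the ordinary strong maximum principle for the boundary Laplacian applies. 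A maximum principle in the bulk \emph{is} used, but it is applied to the scalar curvature $J$ of the adapted metric (which satisfies the Riccati-type inequality~\eqref{eqn:12_maximum_r_interior} obtained from $Q_{\phi_1}^{m_1}=0$, with Neumann datum $\propto Q_{2\gamma}$), not to a factor of the weighted Paneitz operator applied to $U$. This sidesteps your flagged obstacle about signing both potentials entirely. Your final paragraph about the sign of $R_{h_f}$ when $f>0$ has the right flavor (the paper phrases it via the Yamabe constant) and the conformal-covariance observation about $Q_{2\gamma,h_f}$ is correct, but those are the easy parts; the core of the theorem requires the continuity argument.
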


A similar statement holds for the fractional GJMS operators $P_{2\gamma}$ with $\gamma\in(0,1)$, in which case one only needs to assume that the representative of the conformal boundary has semi-positive fractional $Q$-curvature.  The main new ingredient in the proof of Theorem~\ref{thm:maximum} relative to the arguments of Gursky and Malchiodi~\cite{GurskyMalchiodi2014} is an improved, relative to the statement given by Gonz\'alez and Qing~\cite{GonzalezQing2010}, Hopf Lemma for degenerate elliptic operators on smooth metric measure spaces.

We remark here that Hang and P.\ Yang~\cite{HangYang2014} have recently taken another perspective on the strong maximum principle for the Paneitz operator, and in particular showed that one only need to assume that $Q$ is semi-positive and the Yamabe constant is positive to recover the result of Gursky and Malchiodi~\cite{GurskyMalchiodi2014}.  A key idea in their work is to use the natural power of the Green's function $G$ for the conformal Laplacian and as an approximate Green's function for the Paneitz operator while exploiting conformal covariance to easily compute the error terms.  Similar ideas would work for the fractional GJMS operators $P_{2\gamma}$ with $\gamma\in(1,2)$, though we have opted not to pursue the details in this article. 

The basic idea underlying both Theorem~\ref{thm:positive} and Theorem~\ref{thm:maximum} is to find a special choice of defining function which naturally encodes the fractional $Q$-curvature in the interior geometry of the compactified smooth metric measure space.  This idea has its origins in the work of Lee~\cite{Lee1995} (see also~\cite{Qing2003}), where a relationship between the scalar curvature of (a representative of) the conformal boundary and the scalar curvature of a particular conformal compactification of a Poincar\'e--Einstein manifold was established.  Lee's defining function is obtained as a particular solution of the Poisson equation~\eqref{eqn:poisson_equation}.  Building on this idea, we will use the special defining function $\rho^\ast$ introduced in~\cite{ChangGonzalez2011,GonzalezQing2010} to see the influence of the fractional $Q$-curvature in the interior geometry of what we will call the adapted smooth metric measure space; i.e.\ the smooth metric measure space arising by using $\rho^\ast$ as the defining function in.  In particular, $Q_{2\gamma}$ naturally arises in the asymptotic expansion of the scalar curvature of the adapted metric near the boundary.  We exploit this fact in conjunction with our observation that, as a generalization of~\cite{Lee1995}, if $\gamma>1$ and the representative of the conformal boundary has nonnegative scalar curvature, then the adapted metric has positive scalar curvature in the interior.

Throughout this article we consider Poincar\'e--Einstein manifolds $(X^{n+1},M^n,g_+)$ in the strong sense that $\Ric(g_+)=-ng_+$ globally.  This is used in a strong way via Theorem~\ref{thm:weighted_gjms_factorization}, which realizes the Poisson operators which give rise to the fractional GJMS operators as factors of the associated weighted GJMS operators.  The connection to smooth metric measure spaces allows us to easily understand the geometry of the interior, which in particular allows us to adapt the geometric ideas from~\cite{Gursky1999,GurskyMalchiodi2014,XuYang2001} to our setting.  It is this latter point which is the most important.  Theorem~\ref{thm:intro_12_case} relies only on conformal covariance, and so one could simply compute as in~\cite{ChangGonzalez2011} to establish a more general form of Theorem~\ref{thm:intro_12_case} for Poincar\'e metrics as in~\cite{FeffermanGraham2012} or even asymptotically hyperbolic manifolds.  However, it is unclear to us whether Theorem~\ref{thm:positive} and Theorem~\ref{thm:maximum} will remain true in those settings, or if one will require some additional assumptions (cf.\ \cite{GuillarmouQing2010}).

This article is organized as follows.  The first two sections review the necessary background for this article:  Section~\ref{sec:scattering} contains the relevant facts of the scattering approach to defining the fractional GJMS operators and Section~\ref{sec:smms} contains the relevant definitions and facts about smooth metric measure spaces necessary to realize the fractional GJMS operators as boundary operators associated to weighted GJMS operators; note that Theorem~\ref{thm:weighted_gjms_factorization} is new, giving a factorization of the weighted GJMS operators on a distinguished class of quasi-Einstein smooth metric measure spaces.  The remaining sections detail our results and techniques.  In Section~\ref{sec:extension} we describe how to define the fractional GJMS operators in terms of an extension problem involving weighted GJMS operators on suitable smooth metric measure spaces, which as a corollary yields a relationship between the energy of the fractional GJMS operators and the energy of the corresponding weighted GJMS operators.  In Section~\ref{sec:energy} we give another interpretation of the energy of the fractional GJMS operators as the finite part of the energy of the second-order extension operator introduced by Chang and Gonz\'alez~\cite{ChangGonzalez2011}.  In Section~\ref{sec:adapted} we detail the construction of the adapted smooth metric measure spaces and discuss their properties; in particular, we prove the aforementioned result on the nonnegativity of the scalar curvature of the adapted metric.  In Section~\ref{sec:positivity} we prove Theorem~\ref{thm:positive} and Theorem~\ref{thm:maximum}, as well as other related positivity results for the fractional GJMS operators; this section also includes the aforementioned Hopf lemma.  We also include an appendix which gives a partial factorization of the GJMS operators on normalized products of a positively-curved Einstein manifold with a negatively-curved Einstein manifold which is needed to prove Theorem~\ref{thm:weighted_gjms_factorization}.

\subsection*{Acknowledgments} The authors would like to thank Mar\'ia del Mar Gonz\'alez, Robin Graham, Jie Qing, Paul Yang, and Ray Yang for helpful conversations while this paper was being put together.

\section{Fractional GJMS operators via scattering theory}
\label{sec:scattering}

In this section we recall the definition of the fractional GJMS operators via scattering theory~\cite{GrahamZworski2003}.  A triple $(X^{n+1},M^n,g_+)$ is a \emph{Poincar\'e--Einstein manifold} if
\begin{enumerate}
\item $X^{n+1}$ is (diffeomorphic to) the interior of a compact manifold $\oX^{n+1}$ with boundary $\partial\oX=M^n$,
\item $(X^{n+1},g_+)$ is complete with $\Ric(g_+)=-ng_+$, and
\item there exists a nonnegative $\rho\in C^\infty(X)$ such that $\rho^{-1}(0)=M^n$, $d\rho\not=0$ along $M$, and the metric $g:=\rho^2g_+$ extends to a smooth metric on $\oX^{n+1}$.
\end{enumerate}
A function $\rho$ satisfying these properties is called a \emph{defining function}, though we will later allow defining functions to have less regularity at the boundary $M$ (cf.\ Theorem~\ref{thm:01_case}).  Since $\rho$ is only determined up to multiplication by a positive smooth function on $\oX$, it is clear that only the conformal class $[h]:=[g|_{TM}]$ on $M$ is well-defined for a Poincar\'e--Einstein manifold.  We call the pair $(M^n,[h])$ the \emph{conformal boundary} of the Poincar\'e--Einstein manifold $(X^{n+1},M^n,g_+)$ and we call a metric $h\in[h]$ a \emph{representative of the conformal boundary}.

Given a Poincar\'e--Einstein manifold $(X^{n+1},M^n,g_+)$ and a representative $h$ of the conformal boundary, there exists a unique defining function $r$, called the \emph{geodesic defining function}, such that, locally near $M$, the metric $g_+$ takes the form
\[ g_+ = r^{-2}\left(dr^2 + h_r\right) \]
for $h_r$ a one-parameter family of metrics on $M$ with $h_0=h$ and having an asymptotic expansion which is even in powers of $r$, at least up to order $n$ (cf.\ \cite{FeffermanGraham2012,GrahamLee1991,Lee1995}).  For simplicity, we assume everywhere except Corollary~\ref{cor:signs_of_r_and_q2} that $h$ is even to all orders; since we restrict our attention to $\gamma<\frac{n}{2}$, this assumption plays no role except to simply a few statements in this section.

It is well-known (see~\cite{GrahamZworski2003,MazzeoMelrose1987} for more general statements) that given $f\in C^\infty(M)$ and $s\in\bC$ such that $\Real s>\frac{n}{2}$, $s\not\in\frac{n}{2}+\bN$, and $s(n-s)$ is not in the pure-point spectrum $\sigma_{\mathrm{pp}}(-\Delta_{g_+})$ of $-\Delta_{g_+}$, the Poisson equation
\begin{equation}
\label{eqn:poisson_equation}
-\Delta_{g_+}u - s(n-s)u = 0 \qquad\text{in $X$}
\end{equation}
has a unique solution of the form
\begin{equation}
\label{eqn:poisson_asymptotics}
u = Fr^{n-s} + Hr^s \qquad\text{for $F,H\in C^\infty\left(\oX\right)$ and $F\rv_M=f$}.
\end{equation}
Indeed, $F$ has an asymptotic expansion
\begin{equation}
\label{eqn:F_expansion}
F = f_{(0)} + f_{(2)}r^2 + f_{(4)}r^4 + \dotsb
\end{equation}
for $f_{(0)}=f$ and all the functions $f_{(2\ell)}$ are determined by $f$.  The \emph{Poisson operator} $\mP(s)$ is the operator which maps $f$ to the solution $u=\mP(s)f$, and this operator is analytic for $s(n-s)\not\in\sigma_{\mathrm{pp}}(-\Delta_{g_+})$.  The \emph{scattering operator} is defined by $S(s)f = H\rv_M$.  This defines a meromorphic family of pseudodifferential operators in $\Real(s)>\frac{n}{2}$.  The values $s\in\left\{\frac{n}{2}+1,\frac{n}{2}+2,\dotsc\right\}$ are simple poles, and are known as the trivial poles.  The scattering operator may have other poles, but we shall assume for the remainder of this article that we are not in these exceptional cases.

Given $\gamma\in\left(0,\frac{n}{2}\right)$, Graham and Zworski~\cite{GrahamZworski2003} defined the \emph{fractional GJMS operator $P_{2\gamma}$} as the operator
\begin{equation}
\label{eqn:scattering_definition}
P_{2\gamma}f := d_\gamma S\left(\frac{n}{2}+\gamma\right)f \qquad\text{for $d_\gamma=2^{2\gamma}\frac{\Gamma(\gamma)}{\Gamma(-\gamma)}$}.
\end{equation}
For $\gamma\in\bN$, this definition recovers the GJMS operators~\cite{GJMS1992}.  Due to the overuse of the symbol `$P$', we usually denote the GJMS operators by $L_{2k}$ when $k\in\bN$.  From its definition, it is clear that $P_{2\gamma}$ is linear.  For $\gamma\in\left(0,\frac{n}{2}\right)$, Graham and Zworski showed that $P_{2\gamma}$ is a formally self-adjoint pseudodifferential operator with principle symbol equal to the principle symbol of $(-\Delta)^\gamma$, and moreover, if $\hat h=e^{2\sigma}h$ is another choice of conformal representative of the conformal boundary, then
\[ \hat P_{2\gamma}f = e^{-\frac{n+2\gamma}{2}\sigma} P_{2\gamma}\left(e^{\frac{n-2\gamma}{2}\sigma}f\right) \]
for all $f\in C^\infty(M)$.  Together these properties justify the terminology ``fractional GJMS operator.''

We adopt the convention that for $\gamma\in\left(0,\frac{n}{2}\right)$, the \emph{fractional $Q$-curvature $Q_{2\gamma}$} is the scalar
\begin{equation}
\label{eqn:q_scattering}
Q_{2\gamma} := \frac{2}{n-2\gamma}P_{2\gamma}(1) .
\end{equation}
In particular, we emphasize that this definition produces a well-defined invariant in the critical case $2\gamma=n$; see~\cite{GrahamZworski2003}.
\section{Smooth metric measure spaces}
\label{sec:smms}

In this article, we show that one can identify fractional GJMS operators as boundary operators associated to weighted GJMS operators in an interior smooth metric measure space.  In order to make sense of this, we must recall some aspects of the (conformal) geometry of smooth metric measure spaces.  Our treatment here will be mostly formal, with definitions being made by passing to formal warped products with fibers of non-integer dimensions.  For a more intrinsic discussion of these topics, we refer the reader to~\cite{Case2011t}.

A \emph{smooth metric measure space} is a four-tuple $(\oX^{n+1},g,v^m\dvol,\mu)$ determined by a Riemannian manifold $(\oX^{n+1},g)$ with boundary $M^n=\partial\oX^{n+1}$, the Riemannian volume element $\dvol$ associated to $g$, a nonnegative function $v\in C^\infty(\oX)$ with $v^{-1}(0)=M$, and constants $m\in\bR\setminus\{1-n\}$ and $\mu\in\bR$.  In the interior $X$ of $\oX$, we define the function $\phi\in C^\infty(X)$ by $v^m=e^{-\phi}$; this is the definition of the symbol $v^m$ in the case $m=\infty$.  For the purposes of this article, we will only be interested in the cases $m\in(1-n,1)$.  Note that smooth metric measure spaces with $m=0$ are Riemannian manifolds.

The only reason for the assumption $m\not=1-n$ is that it allows us to define the weighted Schouten tensor~\eqref{eqn:weighted_schouten_tensor}.  We can allow $m=1-n$ so long as we do not consider the weighted Schouten tensor of $(\oX^{n+1},g,v^m\dvol,\mu)$, a liberty that we will make use of when discussing smooth metric measure spaces as a means to study the fractional GJMS operators $P_{2\gamma}$ in the critical case $2\gamma=n$ for $n$ odd.  For this purpose, note that we can define the weighted conformal Laplacian without using the weighted Schouten tensor; see~\eqref{eqn:weighted_gjms12}.

Formally, one can think of a smooth metric measure space $(X^{n+1},g,v^m\dvol,\mu)$ as the base of the warped product $(X^{n+1}\times F_\mu^m,g\oplus v^2g_F)$ for $(F_\mu^m,g_F)$ the simply-connected $m$-dimensional spaceform with $\Ric(g_F)=\mu g_F$.  This means that one should write down geometric invariants on the warped product (cf.\ \cite{ONeill}) when $m\in\bN$, restrict them to the base, and then extend them to general $m$ by treating $m$ as a formal parameter.  As one example, the \emph{Bakry-\'Emery Ricci tensor} $\Ric_\phi^m$ of $(X^{n+1},g,v^m\dvol,\mu)$ is defined by $\Ric_\phi^m:=\Ric-mv^{-1}\nabla^2v$, and is formally the restriction of the Ricci tensor of the corresponding warped product $(X^{n+1}\times F_\mu^m,g\oplus v^2g_F)$ to horizontal lifts of vector fields on $X$.  As another example, the \emph{weighted Laplacian} $\Delta_\phi$ is the operator $\Delta_\phi:=\Delta-\nabla\phi$, and is formally obtained by applying the Laplacian of the corresponding warped product to the horizontal lift of a smooth function on $X$, and then projecting back to $X$.

We say that $(X^{n+1},g,v^m\dvol_g,\mu)$ and $(X^{n+1},\hat g,\hat v^m\dvol_{\hat g},\mu)$ are \emph{pointwise conformally equivalent} if there is a function $u\in C^\infty(M)$ such that $\hat g=u^{-2}g$ and $\hat v=u^{-1}v$; formally, this is equivalent to the statement that the corresponding warped products are pointwise conformally equivalent.  We define the \emph{weighted Schouten tensor} $P_\phi^m$ and the \emph{weighted Schouten scalar} $J_\phi^m$ of a smooth metric measure space by
\begin{align}
\label{eqn:weighted_schouten_tensor} P_\phi^m & = \frac{1}{m+n-1}\left(\Ric_\phi^m - J_\phi^mg\right) \\
\label{eqn:weighted_schouten_scalar} J_\phi^m & = \frac{1}{2(m+n)}\left(R - 2mv^{-1}\Delta v - m(m-1)v^{-2}\lv\nabla v\rv^2 + m\mu v^{-2}\right) .
\end{align}
Unlike the Riemannian case, the weighted Schouten scalar is not in general the trace of the weighted Schouten tensor.  We shall denote by $Y_\phi^m$ the difference
\[ Y_\phi^m := J_\phi^m - \tr P_\phi^m; \]
note that the Schouten tensor $\widetilde{P}$ of the corresponding warped product satisfies $\widetilde{P}=P\oplus\frac{1}{m}Y_\phi^mv^2g_F$.

Given an integer $k$, the \emph{weighted GJMS operator} $L_{2k,\phi}^m$ of a smooth metric measure space $(X^{n+1},g,v^m\dvol,\mu)$ is the linear conformally covariant differential operator with leading term $(-\Delta_\phi)^k$ which is formally obtained by applying the GJMS operator $L_{2k}$ of the corresponding warped product to the horizontal lift of a function on $X$ and then projecting back to $X$.  In particular, the weighted GJMS operators are conformally covariant; if $(X^{n+1},g,v^m\dvol_g,\mu)$ and $(X^{n+1},\hat g,\hat v^m\dvol_{\hat g},\mu)$ are such that $\hat g=u^{-2}g$ and $\hat v=u^{-1}v$, then
\begin{equation}
\label{eqn:weighted_gjms_covariant}
\widehat{L_{2k,\phi}^m}w = u^{\frac{m+n+1+2k}{2}} L_{2k,\phi}^m\left(u^{-\frac{m+n+1-2k}{2}}w\right)
\end{equation}
for all $w\in C^\infty(X)$.

Local formulae for the weighted GJMS operators of order two and four, and defined without passing to the formal warped product, have been given in~\cite{Case2011t}:
\begin{equation}
\label{eqn:weighted_gjms12}
\begin{split}
L_{2,\phi}^m & = -\Delta_\phi + \frac{m+n-1}{2}J_\phi^m , \\
L_{4,\phi}^m & = \Delta_\phi^2 + \delta_\phi\left(4P_\phi^m - (m+n-1)J_\phi^mg\right)d + \frac{m+n-3}{2}Q_\phi^m
\end{split}
\end{equation}
for
\[ Q_\phi^m := -\Delta_\phi J_\phi^m - 2\left(\lv P_\phi^m\rv^2 + \frac{1}{m}\left(Y_\phi^m\right)^2\right) + \frac{m+n+1}{2}\left(J_\phi^m\right)^2 \]
the \emph{weighted $Q$-curvature}; our sign convention is $\Delta_\phi=\delta_\phi d$ on functions.  Recursive formulae for the GJMS operators of all orders were found by Juhl~\cite{Juhl2013}; see also~\cite{FeffermanGraham2013}.  In principle, these formulae can be extended to the weighted GJMS operators, though at present this has not been carried out in the literature.  This issue will be addressed elsewhere.

In addition to the conformal covariance of the weighted GJMS operator~\eqref{eqn:weighted_gjms_covariant}, we need the following product formulae for the weighted GJMS operators of a special class of smooth metric measure spaces.

\begin{thm}
\label{thm:weighted_gjms_factorization}
Let $(X^{n+1},g_+)$ be a Riemannian manifold with $\Ric(g_+)=-ng_+$.  Then for any $k\in\bN$ and any $m\in(1-n,\infty]$, the weighted GJMS operator $L_{2k,\phi}^m$ of the smooth metric measure space $(X^{n+1},g_+,1^m\dvol,m-1)$ satisfies
\begin{equation}
\label{eqn:weighted_gjms_factorization}
L_{2k,\phi}^m = \prod_{j=1}^k \left(-\Delta_{g_+} - \frac{(n-m+2k-4j+3)(m+n-2k+4j-3)}{4}\right) .
\end{equation}
\end{thm}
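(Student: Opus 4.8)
The plan is to reduce the weighted statement to an honest Riemannian computation on a warped product and then invoke a factorization of the ordinary GJMS operators in that setting. First I would exploit the formal definition of the weighted GJMS operator: $L_{2k,\phi}^m$ on $(X^{n+1},g_+,1^m\dvol,m-1)$ is obtained by taking the GJMS operator $L_{2k}$ of the warped product $(X^{n+1}\times F^m, g_+\oplus 1\cdot g_F) = (X^{n+1}\times F^m, g_+\oplus g_F)$, where $F^m$ is the round $m$-sphere of sectional curvature one (since $\mu=m-1$), applying it to the horizontal lift of a function on $X$, and restricting. Because the warping function $v\equiv 1$, this warped product is simply the Riemannian product $(X\times S^m, g_+\oplus g_{S^m})$, so there is no genuine warping to contend with; the whole subtlety is packaged into treating $m$ as a formal parameter at the end. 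Thus the first task is to show that on a Riemannian product of an Einstein manifold with $\Ric(g_+) = -ng_+$ and the unit sphere $S^m$ (an Einstein manifold with $\Ric = (m-1)g_{S^m}$), the GJMS operator $L_{2k}$ restricted to functions pulled back from the first factor factors as the stated product of shifted Laplacians.

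The key input here is the known factorization of GJMS operators on Einstein manifolds: on an $N$-dimensional Einstein manifold with $\Ric = 2\lambda(N-1)g$ (or whatever normalization), Gover's formula~\cite{Gover2006q} (see also~\cite{FeffermanGraham2012}) gives $L_{2k} = \prod_{j=1}^k\left(-\Delta + c_j\right)$ for explicit constants $c_j$ depending on $N$, $k$, $j$, and $\lambda$. The product $X^{n+1}\times S^m$ is not itself Einstein, so one cannot apply this directly; this is precisely why the appendix (referenced in the introduction as "a partial factorization of the GJMS operators on normalized products of a positively-curved Einstein manifold with a negatively-curved Einstein manifold") is needed. The plan is therefore: (i) cite or prove in the appendix that on the product $(X^{n+1}\times S^m, g_+\oplus g_{S^m})$ the GJMS operator $L_{2k}$, when acting on functions constant along $S^m$, agrees with $\prod_{j=1}^k\left(-\Delta_{g_+} - a_j\right)$ for suitable constants $a_j$ expressed in terms of $n$, $m$, $k$, $j$; (ii) compute the constants $a_j$ explicitly and check they match $\frac{(n-m+2k-4j+3)(m+n-2k+4j-3)}{4}$; (iii) observe that $\Delta_{g_+\oplus g_{S^m}}$ applied to an $S^m$-invariant function equals $\Delta_{g_+}$, so the restriction step is transparent; (iv) extend the resulting polynomial identity in $m$ from $m\in\bN$ (where the warped-product interpretation is literal) to all $m\in(1-n,\infty]$ by the formal-parameter principle, noting both sides are rational — in fact polynomial — in $m$.

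**Main obstacle.** The hard part is step (i)–(ii): establishing the factorization of $L_{2k}$ on the non-Einstein product and pinning down the constants. On a product of two Einstein pieces the ambient/Poincaré--Einstein machinery does not immediately produce a clean factorization, so one must either build the relevant Poincaré metric for the product by hand and run the scattering or ambient construction, or use Juhl's recursive formulae~\cite{Juhl2013,FeffermanGraham2013} together with the fact that on each factor the Schouten tensor is a constant multiple of the metric. A cleaner route, which I would pursue, is to recognize that $(X^{n+1}\times S^m)$ with this product metric is itself conformal (after a suitable change) to something whose Poincaré--Einstein filling is understood — or more directly, to use the known factorization on the round sphere $S^m$ combined with the Einstein factorization on $X$ and a separation-of-variables argument on the product ambient metric, which is a product of the two ambient metrics. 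The bookkeeping of the constants $a_j$ — getting the precise symmetric combination $(n-m+2k-4j+3)(m+n-2k+4j-3)/4$ rather than something off by shifts — is where essentially all the care is required, and verifying it against the known cases $k=1,2$ using the explicit formulae~\eqref{eqn:weighted_gjms12} for $L_{2,\phi}^m$ and $L_{4,\phi}^m$ (which reduce, on $(X^{n+1},g_+,1^m\dvol,m-1)$ with $\Ric(g_+)=-ng_+$, to computable expressions since $J_\phi^m$ and $P_\phi^m$ become constants) provides the crucial sanity check and anchors the induction.
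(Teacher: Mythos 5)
Your reduction is the right one and matches the paper's: since $v\equiv1$, the formal warped product is the honest Riemannian product $(X^{n+1}\times F^m, g_+\oplus g_F)$ of two Einstein manifolds, so the theorem follows once one has a factorization of $L_{2k}$ on that product when acting on functions pulled back from $X$, together with the formal-parameter principle. You also correctly identify the real obstacle as establishing that factorization and pinning down the constants, and you correctly observe that the $k=1,2$ cases can be checked against \eqref{eqn:weighted_gjms12}.

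However, your preferred route for the hard step rests on a false premise. You suggest that the ambient metric of the product is ``a product of the two ambient metrics'' and that one could separate variables accordingly. This is not so: the ambient construction associates to an $N$-manifold an $(N+2)$-dimensional Lorentzian metric, so the product of the ambient metrics of $X^{n+1}$ and $F^m$ would have dimension $n+m+5$, whereas the ambient metric of the $(n+m+1)$-dimensional product must have dimension $n+m+3$. More fundamentally, the conformal class $[g_+\oplus g_F]$ has a single metric bundle (one dilation factor), not a product of two, so the ambient spaces cannot decouple. What is true --- and is the actual input the paper uses --- is the theorem of Gover and Leitner that the ambient metric of such a product of Einstein manifolds takes the closed form $\tilde g = 2\rho\,dt^2 + 2t\,dt\,d\rho + t^2g_\rho$ with the non-product one-parameter family $g_\rho = (1-\tfrac{\rho}{2})^2g_+ + (1+\tfrac{\rho}{2})^2g_F$. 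With this in hand, the paper's Appendix~\ref{app:product} writes down the $\tilde\Delta$-harmonic extension equation for a function of $x\in X$ and $\rho$, differentiates it $\ell$ times at $\rho=0$ to obtain a three-term recursion for the Taylor coefficients, and solves that recursion explicitly; the obstruction at order $k$ is exactly the claimed product of shifted Laplacians. That explicit recursion and its closed-form solution~\eqref{eqn:q_formula} is precisely the ``bookkeeping where essentially all the care is required'' that your proposal leaves undone. So while your global architecture agrees with the paper, the proposal as written would not go through as stated and omits the computational heart of the argument.
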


The proof of Theorem~\ref{thm:weighted_gjms_factorization} when $k\in\{1,2\}$ is an easy consequence of~\eqref{eqn:weighted_gjms12}.  We prove Theorem~\ref{thm:weighted_gjms_factorization} in Appendix~\ref{app:product} by passing to the formal warped product associated to $(X^{n+1},g_+,1^m\dvol,m-1)$.

In this article, smooth metric measure spaces arise naturally when studying fractional GJMS operators $P_{2\gamma}$ defined on Poincar\'e--Einstein manifolds, with the parameter $\gamma$ determining the value of the dimensional parameter $m$.  The key idea is that there is a natural relationship between the Poisson equation~\eqref{eqn:poisson_equation} and the weighted conformal Laplacian via Theorem~\ref{thm:weighted_gjms_factorization}, and hence the conformal covariance of the latter equation easily yields reformulations of~\eqref{eqn:poisson_equation} and~\eqref{eqn:poisson_asymptotics} in terms of generalized Dirichlet-to-Neumann maps for compactifications of Poincar\'e--Einstein manifolds; see Section~\ref{sec:extension} for details.  The following lemma is useful when performing local computations using this correspondence.

\begin{lem}
\label{lem:pe_smms_formulae}
Let $(X^{n+1},M^n,g_+)$ be a Poincar\'e--Einstein manifold and let $\rho$ be any defining function.  Fix $m\not=1-n$.  The smooth metric measure space $(X^{n+1},g:=\rho^2g_+,\rho^m\dvol_{g},m-1)$ has
\begin{align}
\label{eqn:J_to_grad} J + \rho^{-1}\Delta\rho & = \frac{n+1}{2}\rho^{-2}\left(\lv\nabla\rho\rv^2 - 1 \right) \\
\label{eqn:J_to_weight} J_\phi^m & = J - \frac{m}{n+1}\left(J + \rho^{-1}\Delta\rho\right) \\
\label{eqn:P_to_weight} P_\phi^m & = P .
\end{align}
\end{lem}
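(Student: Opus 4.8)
The plan is to deduce all three identities from the single hypothesis $\Ric(g_+) = -ng_+$ by transplanting it onto $g = \rho^2 g_+$ via the conformal transformation laws; since every quantity in the statement is computed with respect to $g$, it is cleanest to regard $g_+ = \rho^{-2} g$ as a conformal rescaling of $g$, keeping $\nabla$, $\Delta = \tr\nabla^2$, and $\lv\cdot\rv$ taken throughout with respect to $g$. Writing $g_+ = e^{2\omega} g$ with $\omega = -\log\rho$ and applying the transformation law for the Ricci tensor in dimension $n+1$, the term $\rho^{-2}\,d\rho\otimes d\rho$ appearing in $\nabla^2\omega$ cancels against $d\omega\otimes d\omega$, leaving only $-\rho^{-1}\nabla^2\rho$, so that $\Ric(g_+) = -n\rho^{-2} g$ becomes the master identity
\[ \Ric_g = -(n-1)\rho^{-1}\nabla^2\rho + \bigl(n\rho^{-2}\lv\nabla\rho\rv^2 - \rho^{-1}\Delta\rho - n\rho^{-2}\bigr) g . \]
Taking the $g$-trace gives $R = -2n\rho^{-1}\Delta\rho + n(n+1)\rho^{-2}(\lv\nabla\rho\rv^2 - 1)$; dividing by $2n$ and using $J = \tfrac{R}{2n}$ in dimension $n+1$ yields \eqref{eqn:J_to_grad} at once.

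For \eqref{eqn:J_to_weight} I would substitute directly into the definition \eqref{eqn:weighted_schouten_scalar} with $v = \rho$ and $\mu = m-1$, replacing $R$ by $2nJ$ and $\rho^{-2}(\lv\nabla\rho\rv^2-1)$ by $\tfrac{2}{n+1}\bigl(J + \rho^{-1}\Delta\rho\bigr)$ via \eqref{eqn:J_to_grad}. After collecting terms the prefactor $\tfrac{1}{2(m+n)}$ cancels against an emerging factor $\tfrac{m+n}{n+1}$, leaving exactly $J_\phi^m = J - \tfrac{m}{n+1}(J + \rho^{-1}\Delta\rho)$; this step is purely algebraic once the master identity is in hand.

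For \eqref{eqn:P_to_weight}, since $v = \rho$ we have $\Ric_\phi^m = \Ric_g - m\rho^{-1}\nabla^2\rho$, so the master identity gives $\Ric_\phi^m = -(m+n-1)\rho^{-1}\nabla^2\rho + \bigl(n\rho^{-2}\lv\nabla\rho\rv^2 - \rho^{-1}\Delta\rho - n\rho^{-2}\bigr)g$. Plugging this into \eqref{eqn:weighted_schouten_tensor} shows $P_\phi^m = -\rho^{-1}\nabla^2\rho + c_\phi\, g$ for a scalar $c_\phi$, while the master identity together with $P = \tfrac{1}{n-1}(\Ric_g - J g)$ gives $P = -\rho^{-1}\nabla^2\rho + c\, g$; hence $P_\phi^m - P$ is a pure multiple of $g$. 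To see that this multiple vanishes I would compute the $g$-trace directly from \eqref{eqn:weighted_schouten_tensor}: $(m+n-1)\tr_g P_\phi^m = \tr_g\Ric_\phi^m - (n+1)J_\phi^m = R - m\rho^{-1}\Delta\rho - (n+1)J_\phi^m$, and substitute $R = 2nJ$ together with the already-proven \eqref{eqn:J_to_weight} to check that this equals $(m+n-1)J$. Since $\tr_g P = J$ as well, equality of traces forces $c_\phi = c$, which is \eqref{eqn:P_to_weight}.

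All the computations are routine. The one real subtlety is in the last step: unlike the Riemannian case, $J_\phi^m$ is not the $g$-trace of $P_\phi^m$, so \eqref{eqn:P_to_weight} cannot be read off formally and one must genuinely verify that the pure-trace parts of $P_\phi^m$ and $P$ agree. Beyond that, the main thing to watch is consistency of the sign convention for the Laplacian (here $\Delta = \tr\nabla^2$) and that the ambient dimension is $n+1$, not $n$, throughout the conformal transformation laws.
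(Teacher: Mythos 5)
Your proposal is correct, and it follows essentially the same route as the paper: all three identities are read off from the Einstein condition $\Ric(g_+)=-ng_+$ transplanted to $g$ via the conformal transformation laws, then substituted into the definitions of the weighted quantities. The only (cosmetic) difference is in \eqref{eqn:P_to_weight}: the paper substitutes the Einstein identity $P + \rho^{-1}\nabla^2\rho = -\tfrac{1}{n+1}(J+\rho^{-1}\Delta\rho)g$ directly into the definition of $P_\phi^m$, whereas you observe that both $P_\phi^m$ and $P$ have trace-free part equal to that of $-\rho^{-1}\nabla^2\rho$ and then match traces; these are equivalent, and your explicit remark that $J_\phi^m \neq \tr P_\phi^m$ --- so the trace comparison must be done honestly via \eqref{eqn:weighted_schouten_tensor} --- is exactly the right point to flag.
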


\begin{proof}

\eqref{eqn:J_to_grad} follows immediately from the fact that $\rho^{-2}g$ has constant scalar curvature $-n(n+1)$.  By definition,
\[ J_\phi^m = \frac{1}{2(m+n)}\left(2nJ - 2m\rho^{-1}\Delta\rho - m(m-1)\rho^{-2}(\lv\nabla\rho\rv^2-1)\right) . \]
Inserting~\eqref{eqn:J_to_grad} into the above display yields~\eqref{eqn:J_to_weight}.  Since $\rho^{-2}g$ is Einstein, it holds that
\[ P + \rho^{-1}\nabla^2\rho = -\frac{1}{n+1}\left(J+\rho^{-1}\Delta\rho\right)g . \]
Inserting this and~\eqref{eqn:J_to_weight} into the definition
\[ P_\phi^m = \frac{1}{m+n-1}\left( (n-1)P + Jg - m\rho^{-1}\nabla^2\rho - J_\phi^m g \right) \]
of the weighted Schouten tensor yields~\eqref{eqn:P_to_weight}.
\end{proof}
\section{Weighted GJMS operators as boundary operators}
\label{sec:extension}

One of the key observations in this article is that one can naturally regard the fractional GJMS operators as the boundary operators associated to weighted GJMS operators on suitable smooth metric measure spaces.  Phrased differently, one can define the fractional GJMS operators as generalized Dirichlet-to-Neumann maps associated to weighted GJMS operators, providing a natural geometric interpretation of the works of Caffarelli and Silvestre~\cite{CaffarelliSilvestre2007}, Chang and Gonz\'alez~\cite{ChangGonzalez2011}, and R.\ Yang~\cite{Yang2013} on extension problems related to fractional GJMS operators.  The purpose of this section is to make this connection precise.

\subsection{The model case $\gamma\in(0,1)$} By way of motivation, let us begin by reformulating the extension theorem of Chang and Gonz\'alez~\cite{ChangGonzalez2011} for the fractional GJMS operator $P_{2\gamma}$ with $\gamma\in(0,1)$ in the language of smooth metric measure spaces.  Indeed, the following result provides a slight improvement of~\cite[Theorem~4.3 and Theorem~4.7]{ChangGonzalez2011} by allowing one to compute using a much larger class of conformal compactifications.

\begin{thm}
\label{thm:01_case}
Let $(X^{n+1},M^n,g_+)$ be a Poincar\'e--Einstein manifold, fix a representative $h$ of the conformal boundary, and let $r$ be the geodesic defining function associated to $h$.  Let $\gamma\in(0,1)$ and set $m_0=1-2\gamma$.  Let $\rho$ be a defining function for $M$ such that, asymptotically near $M$,
\begin{equation}
\label{eqn:01_case_rho}
\rho = r + \Phi r^{1+2\gamma} + o(r^{1+2\gamma})
\end{equation}
for $\Phi\in C^\infty(M)$, and consider the smooth metric measure space
\begin{equation}
\label{eqn:01_case_smms}
\left(\oX^{n+1},g:=\rho^2g_+,\rho^{m_0}\dvol,m_0-1\right) .
\end{equation}
Given $f\in C^\infty(M)$, the function $U$ is the solution of the boundary value problem
\begin{equation}
\label{eqn:01_case}
\begin{cases}
L_{2,\phi_0}^{m_0} U = 0, & \quad\text{in $X$}, \\
U = f, & \quad\text{on $M$}
\end{cases}
\end{equation}
if and only if the function $u=\rho^{n-s}U$ is the solution of the Poisson problem
\begin{equation}
\label{eqn:01_case_poisson}
\begin{cases}
-\Delta_{g_+}u - s(n-s)u = 0, & \text{in $X$},\\
u = Fr^{n-s} + Gr^s, & F,G\in C^\infty(\oX), \\
F\rv_{r=0} = f
\end{cases}
\end{equation}
for $s=\frac{n}{2}+\gamma$.  Moreover, the solution $U$ of~\eqref{eqn:01_case} is such that
\begin{equation}
\label{eqn:01_extension_defn}
P_{2\gamma}f - \frac{n-2\gamma}{2}d_\gamma\Phi f = \frac{d_\gamma}{2\gamma}\lim_{\rho\to0} \rho^{m_0}\frac{\partial U}{\partial\rho}
\end{equation}
for $d_\gamma$ as in~\eqref{eqn:scattering_definition}.
\end{thm}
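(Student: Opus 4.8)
The plan is to exploit the conformal covariance of the weighted conformal Laplacian together with the factorization from Theorem~\ref{thm:weighted_gjms_factorization} to reduce the fourth-order (here: second-order) boundary value problem to the scattering Poisson equation, and then to extract the Dirichlet-to-Neumann term by comparing asymptotic expansions. First I would record, via Theorem~\ref{thm:weighted_gjms_factorization} applied to $(X^{n+1},g_+,1^{m_0}\dvol,m_0-1)$ with $k=1$, that the weighted conformal Laplacian $L_{2,\phi_0}^{m_0}$ computed with respect to the metric $g_+$ itself equals $-\Delta_{g_+}-s(n-s)$ for $s=\tfrac{n}{2}+\gamma$; indeed the constant in the one-factor product is $\tfrac14(n-m_0+1)(m_0+n-1)=\tfrac14(n+2\gamma)(n-2\gamma)=s(n-s)$. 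Thus the Poisson equation~\eqref{eqn:01_case_poisson} is, intrinsically, the statement that $u$ is annihilated by the weighted conformal Laplacian of the (non-compact) smooth metric measure space $(X^{n+1},g_+,1^{m_0}\dvol,m_0-1)$.

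Next I would apply the conformal covariance formula~\eqref{eqn:weighted_gjms_covariant} with $u$-parameter $\rho$ (so $\hat g=\rho^{-2}\cdot(\rho^2 g_+)^{-1}$... more precisely: passing from the pair $(g_+,1)$ to $(g=\rho^2 g_+,\,v=\rho)$ is a conformal change with factor $\rho^{-1}$ in the notation $\hat g=u^{-2}g$, $\hat v=u^{-1}v$). For $k=1$ the covariance weights are $\tfrac{m_0+n+1\pm2}{2}=\tfrac{n-2\gamma}{2}$ on the domain side and $\tfrac{n+4-2\gamma}{2}$ on the range side; hence $L_{2,\phi_0}^{m_0}$ computed for the compactified SMMS~\eqref{eqn:01_case_smms} is intertwined with $-\Delta_{g_+}-s(n-s)$ by conjugation with $\rho^{\frac{n-2\gamma}{2}}=\rho^{n-s}$. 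This gives precisely the equivalence $L_{2,\phi_0}^{m_0}U=0 \iff (-\Delta_{g_+}-s(n-s))(\rho^{n-s}U)=0$, i.e.\ $u=\rho^{n-s}U$ solves the Poisson equation; the boundary conditions match because $U|_M=f$ translates, using $\rho=r+O(r^{1+2\gamma})$, into $F|_{r=0}=f$ after one checks that the leading coefficient of $u$ in powers of $r$ is unaffected by the higher-order perturbation $\Phi r^{1+2\gamma}$. The Neumann-type normalization in~\eqref{eqn:01_case} of Theorem~\ref{thm:intro_12_case} has no analogue here since the problem is second order; one only needs existence and uniqueness, which follow from the corresponding statements for~\eqref{eqn:poisson_equation} recalled in Section~\ref{sec:scattering}.

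It remains to prove the Dirichlet-to-Neumann formula~\eqref{eqn:01_extension_defn}. Here I would write $U=\rho^{s-n}u$ and use the asymptotic expansion $u=Fr^{n-s}+Gr^s$ with $S(s)f=G|_M$ and $P_{2\gamma}f=d_\gamma G|_M$. Substituting $\rho=r+\Phi r^{1+2\gamma}+o(r^{1+2\gamma})$ and expanding $\rho^{s-n}=r^{s-n}\bigl(1+\Phi r^{2\gamma}+\cdots\bigr)^{s-n}=r^{s-n}\bigl(1-(n-s)\Phi r^{2\gamma}+\cdots\bigr)$, one finds
\[
U = F + \bigl(G - (n-s)\Phi f\bigr)r^{2\gamma} + o(r^{2\gamma}),
\]
using $n-s=\tfrac{n-2\gamma}{2}=-\gamma+\tfrac n2$ and $2\gamma=s-(n-s)$. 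Since $F$ is even in $r$ to the relevant order while $r^{2\gamma}$ is the first fractional power, applying $\rho^{m_0}\partial_\rho$ (with $m_0=1-2\gamma$, so $\rho^{m_0}\partial_\rho r^{2\gamma}=2\gamma\,r^{m_0}r^{2\gamma-1}(1+o(1))=2\gamma(1+o(1))$ after using $\rho=r(1+o(1))$) and letting $\rho\to0$ kills the $F$-contribution and yields $\lim_{\rho\to0}\rho^{m_0}\partial_\rho U = 2\gamma\bigl(G|_M-(n-s)\Phi f\bigr)$. Multiplying by $\tfrac{d_\gamma}{2\gamma}$ gives $d_\gamma G|_M-\tfrac{n-2\gamma}{2}d_\gamma\Phi f = P_{2\gamma}f-\tfrac{n-2\gamma}{2}d_\gamma\Phi f$, which is~\eqref{eqn:01_extension_defn}.

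The main obstacle I anticipate is the bookkeeping in this last step: one must be careful that the expansion of $u$ truly has no fractional powers below $r^{2\gamma}$ (this uses $\gamma\in(0,1)$, so $2\gamma<2$, and the evenness of $h_r$), that the perturbation term $\Phi r^{1+2\gamma}$ in $\rho$ does not feed back into the coefficient of $r^{2\gamma}$ in $U$ through the even part $F$, and that $\rho^{m_0}\partial_\rho$ versus $r^{m_0}\partial_r$ differ only by terms that vanish in the limit. A secondary check is verifying that the constant $s(n-s)$ produced by Theorem~\ref{thm:weighted_gjms_factorization} with $m=m_0$, $k=1$ indeed matches $\left(\tfrac n2\right)^2-\gamma^2$, and tracking the covariance exponents $\tfrac{m+n+1\pm2k}{2}$ correctly so that the conjugating power is exactly $\rho^{n-s}$ and not some other power; both are short but must be done with care.
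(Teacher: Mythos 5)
Your proposal is correct and takes essentially the same approach as the paper: apply Theorem~\ref{thm:weighted_gjms_factorization} to identify $(L_{2,\phi_0}^{m_0})_+$ with $-\Delta_{g_+}-s(n-s)$, conjugate by $\rho^{n-s}$ via the conformal covariance~\eqref{eqn:weighted_gjms_covariant} to pass to the compactified SMMS, and read off the Neumann limit from the asymptotic expansion $U=f+\bigl(d_\gamma^{-1}P_{2\gamma}f-\tfrac{n-2\gamma}{2}\Phi f\bigr)\rho^{2\gamma}+o(\rho^{2\gamma})$. Your bookkeeping of the exponents and of why the $F$-contribution and the $\rho$-versus-$r$ discrepancies drop out in the limit is accurate and slightly more explicit than the paper's.
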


\begin{proof}

By the conformal invariance of the weighted conformal Laplacian,
\[ L_{2,\phi_0}^{m_0} U = \rho^{-\frac{m_0+n+3}{2}}\left(L_{2,\phi_0}^{m_0}\right)_+\left(\rho^{\frac{m_0+n-1}{2}}U\right) \]
for $\left(L_{2,\phi_0}^{m_0}\right)_+$ the weighted conformal Laplacian of $(X^{n+1},g_+,1^{m_0}\dvol,m_0-1)$.  By Theorem~\ref{thm:weighted_gjms_factorization} and the choice of $m_0$,
\[ \left(L_{2,\phi_0}^{m_0}\right)_+ = -\Delta_{g_+} - \frac{(n-2\gamma)(n+2\gamma)}{4} . \]
In particular, $U$ is a solution to~\eqref{eqn:01_case} if and only if $u=\rho^{n-s}U$ is a solution to~\eqref{eqn:01_case_poisson}.  It thus follows from~\eqref{eqn:poisson_asymptotics}, \eqref{eqn:F_expansion}, and~\eqref{eqn:01_case_rho} that, for such a solution $U$,
\begin{equation}
\label{eqn:01_expansion}
U = f + \left(d_\gamma^{-1}P_{2\gamma}f - \frac{n-2\gamma}{2}\Phi f\right)\rho^{2\gamma} + o(\rho^{2\gamma}),
\end{equation}
from which the conclusion immediately follows.
\end{proof}

Note that Theorem~\ref{thm:01_case} puts very mild assumptions on the choice of defining function, and in particular it applies to all defining functions which are smooth up to the boundary $M$ (in which case $\Phi=0$).  This allows us to be very flexible in making our choice of defining function.  An especially important defining function is constructed in Lemma~\ref{lem:01_rhostar}, where the term $\Phi$ is a constant multiple of $Q_{2\gamma}$.

As an immediate corollary of Theorem~\ref{thm:01_case} we obtain the following energy identity relating a fractional GJMS operator of order $2\gamma\in(0,2)$ on the boundary and the weighted conformal Laplacian of the corresponding smooth metric measure space in the interior.

\begin{cor}
\label{cor:01_inequality}
Let $(X^{n+1},M^n,g_+)$ and $(\oX^{n+1},g,\rho^{m_0}\dvol,m_0-1)$ be as in Theorem~\ref{thm:01_case}.  Given $f\in C^\infty(M)$, let $U$ be the solution of~\eqref{eqn:01_case}.  Then
\begin{multline}
\label{eqn:01_energy_inequality}
-\frac{2\gamma}{d_\gamma}\left[\int_M f\,P_{2\gamma}f\,\dvol_h - \frac{n-2\gamma}{2}d_\gamma\int_M \Phi f^2\,\dvol_h\right] \\ = \int_X \left(\lv\nabla U\rv^2 + \frac{m_0+n-1}{2}J_{\phi_0}^{m_0} U^2\right)\rho^{m_0}\dvol_g .
\end{multline}
\end{cor}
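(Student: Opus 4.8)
The plan is to obtain~\eqref{eqn:01_energy_inequality} from Theorem~\ref{thm:01_case} by integration by parts; essentially all of the content already lives in that theorem, so I will treat the corollary as routine. The structural observation is that on the smooth metric measure space~\eqref{eqn:01_case_smms} the weight coincides with the measure density, $\rho^{m_0}=e^{-\phi_0}$, so that $\divsymb\bigl(\rho^{m_0}\nabla W\bigr)=\rho^{m_0}\Delta_{\phi_0}W$ and $\Delta_{\phi_0}$ is formally self-adjoint with respect to $\rho^{m_0}\dvol_g$. Applying the divergence theorem to the vector field $\rho^{m_0}U\nabla U$ on $X_\epsilon:=\{\rho>\epsilon\}$, and using $L_{2,\phi_0}^{m_0}=-\Delta_{\phi_0}+\tfrac{m_0+n-1}{2}J_{\phi_0}^{m_0}$, I get
\begin{multline*}
\int_{X_\epsilon}\Bigl(\lv\nabla U\rv^2+\tfrac{m_0+n-1}{2}J_{\phi_0}^{m_0}U^2\Bigr)\rho^{m_0}\dvol_g \\ =\int_{X_\epsilon}\bigl(U\,L_{2,\phi_0}^{m_0}U\bigr)\rho^{m_0}\dvol_g+\int_{\{\rho=\epsilon\}}U\,\langle\nabla U,\nu\rangle\,\rho^{m_0}\,d\sigma_\epsilon,
\end{multline*}
where $\nu$ is the outward unit normal to $X_\epsilon$ and $d\sigma_\epsilon$ the induced measure.

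Since $U$ solves~\eqref{eqn:01_case}, the first term on the right vanishes. The outward normal to $X_\epsilon$ along $\{\rho=\epsilon\}$ points toward decreasing $\rho$, and because $\lv\nabla\rho\rv_g\to1$, $g\rv_{TM}=h$, and $d\sigma_\epsilon\to\dvol_h$ as $\epsilon\to0$, the boundary integral converges to $-\int_M f\,\bigl(\lim_{\rho\to0}\rho^{m_0}\,\partial_\rho U\bigr)\dvol_h$. Therefore
\[ \int_X\Bigl(\lv\nabla U\rv^2+\tfrac{m_0+n-1}{2}J_{\phi_0}^{m_0}U^2\Bigr)\rho^{m_0}\dvol_g=-\int_M f\,\Bigl(\lim_{\rho\to0}\rho^{m_0}\frac{\partial U}{\partial\rho}\Bigr)\dvol_h, \]
where for the moment the left side denotes $\lim_{\epsilon\to0}\int_{X_\epsilon}$. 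Substituting~\eqref{eqn:01_extension_defn} --- equivalently, differentiating the expansion~\eqref{eqn:01_expansion} of $U$ term by term, which gives $\lim_{\rho\to0}\rho^{m_0}\partial_\rho U=\tfrac{2\gamma}{d_\gamma}\bigl(P_{2\gamma}f-\tfrac{n-2\gamma}{2}d_\gamma\Phi f\bigr)$ --- and collecting terms yields~\eqref{eqn:01_energy_inequality}.

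The one point that needs care, and the main (if modest) obstacle, is the passage to the limit $\epsilon\to0$, i.e.\ convergence of the boundary integral over $\{\rho=\epsilon\}$. This is immediate from~\eqref{eqn:01_expansion}: writing $U=f+c\,\rho^{2\gamma}+o(\rho^{2\gamma})$ with $c\in C^\infty(M)$ and differentiating, $\rho^{m_0}\partial_\rho U\to 2\gamma c$ uniformly on $M$. One can moreover remove the $\lim_{\epsilon\to0}$ qualification by checking that the right side of~\eqref{eqn:01_energy_inequality} converges absolutely: since $m_0=1-2\gamma\in(-1,1)$, the normal and tangential parts of $\rho^{m_0}\lv\nabla U\rv^2$ are $O(\rho^{2\gamma-1})$ and $O(\rho^{1-2\gamma})$ near $M$, hence integrable, and for the zeroth-order term one uses Lemma~\ref{lem:pe_smms_formulae} together with $\bigl(L_{2,\phi_0}^{m_0}\bigr)_+=-\Delta_{g_+}-\tfrac{(n-2\gamma)(n+2\gamma)}{4}$ to see that the potentially $O(\rho^{2\gamma-2})$ contributions to $J_{\phi_0}^{m_0}$ coming from $J$ and from $\rho^{-2}(\lv\nabla\rho\rv^2-1)$ cancel, so that $\rho^{m_0}J_{\phi_0}^{m_0}U^2$ is integrable too.
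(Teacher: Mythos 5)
Your proof is correct, and it is essentially the routine argument the paper has in mind (the paper states this as ``an immediate corollary'' and gives no details; the closest explicit model is the integration by parts carried out in the display~\eqref{eqn:formal_computation} in the proof of Theorem~\ref{thm:12_renormalized_energy}). Your observation that $\rho^{m_0}\dvol_g$ is the natural measure, the divergence theorem applied to $\rho^{m_0}U\nabla U$ over $X_\epsilon$, the sign of the outward normal on $\{\rho=\epsilon\}$, and the substitution of~\eqref{eqn:01_extension_defn} all match what one would extract from Escobar's identity~\eqref{eqn:escobar_energy} and the proof of Theorem~\ref{thm:01_case}. The integrability check at the end is a genuine addition of rigor that the paper elides: the cancellation of the $O(\rho^{2\gamma-2})$ singularities in $J_{\phi_0}^{m_0}$ does occur, as one can verify directly from Lemma~\ref{lem:pe_smms_formulae} by noting that $J_g$ contributes $-2\gamma(2\gamma-1)\Phi r^{2\gamma-2}$, the term $J+\rho^{-1}\Delta\rho$ contributes $2\gamma(n+1)\Phi r^{2\gamma-2}$, and in the combination $J-\frac{m_0}{n+1}(J+\rho^{-1}\Delta\rho)$ the coefficient becomes $-2\gamma(2\gamma-1+m_0)=0$ because $m_0=1-2\gamma$. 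So $J_{\phi_0}^{m_0}=O(1)$ and the right-hand side of~\eqref{eqn:01_energy_inequality} is an honest convergent integral.
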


Note that if $\gamma\in(0,1)$, then $d_\gamma<0$; see~\eqref{eqn:scattering_definition}.

\subsection{The cases $\gamma\in\left(1,\frac{n}{2}\right)\setminus\bN$} Let us now consider the analogues of Theorem~\ref{thm:01_case} and Corollary~\ref{cor:01_inequality} for larger values of $\gamma$.  Using the equivalence of~\eqref{eqn:01_case} with the Poisson problem~\eqref{eqn:01_case_poisson}, Chang and Gonz\'alez showed~\cite{ChangGonzalez2011} that one can identify the fractional GJMS operator $P_{2\gamma}$ for all $\gamma\in(0,\frac{n}{2})\setminus\bN$ by solving~\eqref{eqn:01_case} and taking sufficiently many normal derivatives on the boundary.  However, it is difficult to use this observation to study the analytic properties of $P_{2\gamma}$; in particular, the energy identity~\eqref{eqn:01_energy_inequality} is no longer valid.

R.\ Yang showed~\cite{Yang2013} how to realize fractional powers of the Laplacian in the flat Euclidean space $\bR^n$ via extensions in such a way as to retain an energy identity relating $(-\Delta)^\gamma$ to an operator defined in the interior of $\bR_+^{n+1}$.  His key insight is that one should understand $(-\Delta)^\gamma$ in terms of an extension using an operator of order $2\lfloor\gamma\rfloor+2$.

\begin{thm}[{R.\ Yang~\cite[Theorem~4.1]{Yang2013}}]
\label{thm:yang}
Let $0<\gamma\not\in\bN$ and set $k=\lfloor\gamma\rfloor$ and $m_k=2k+1-2\gamma\in(-1,1)$.  Define $\Delta_{m_k}:=\Delta+m_ky^{-1}\partial_y$.  Suppose that $U\in W^{k+1,2}(\bR_+^{n+1},y^{m_k}\dvol)$ satisfies
\begin{equation}
\label{eqn:yang_case}
\begin{cases}
 \Delta_{m_k}^{k+1}U = 0, & \text{in $\bR_+^{n+1}$} \\
 \frac{\partial^{2\ell}U}{\partial y^{2\ell}}=(\Delta^\ell f)\prod_{j=0}^{\ell-1}\frac{1}{2\gamma-4j}, & \text{on $M$, where $\ell\in\{0,1,\dotsc,\lfloor\frac{k}{2}\rfloor\}$}, \\
 \lim_{y\to0}y^{m_k}\frac{\partial^{2\ell+1}U}{\partial y^{2\ell+1}} = 0, & \text{where $\ell\in\{0,1,\dotsc\lfloor\frac{k-1}{2}\rfloor\}$}, \\
\end{cases}
\end{equation}
for some $f\in H^\gamma(\bR^n)$, where our convention is that the empty product is equal to one.  Then
\begin{equation}
\label{eqn:yang_extension_defn}
(-\Delta)^\gamma f = C_{\gamma}\lim_{y\to0} y^{m_k}\frac{\partial}{\partial y}\Delta_{m_k}^kU(x,y)
\end{equation}
for $C_{\gamma}$ an explicit constant depending only on $\gamma$.
\end{thm}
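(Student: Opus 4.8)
The natural approach is to reduce the statement to the Caffarelli--Silvestre extension, which is the case $k=0$. The key observation is that $\Delta_{m_k}$ is \emph{exactly} the Caffarelli--Silvestre extension operator for the exponent $s:=\gamma-k=\gamma-\lfloor\gamma\rfloor\in(0,1)$, since $m_k=1-2s$. Combined with the composition law $(-\Delta)^\gamma=(-\Delta)^{s}\circ(-\Delta)^{k}$, valid on a dense subspace of $H^\gamma(\bR^n)$, it therefore suffices to prove the single identity
\[ \Delta_{m_k}^k U\big|_{y=0}=c_{k,\gamma}\,(-\Delta)^k f \]
for an explicit nonzero constant $c_{k,\gamma}$, and to observe that $V:=\Delta_{m_k}^k U$ is itself a Caffarelli--Silvestre extension. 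The latter is immediate: $\Delta_{m_k}^{k+1}U=0$ says precisely that $V$ is $\Delta_{m_k}$-harmonic, and (by the regularity theory for this degenerate-elliptic equation, whose weight $y^{m_k}$ lies in the Muckenhoupt class $A_2$) $V$ has finite weighted Dirichlet energy, hence coincides with the Caffarelli--Silvestre extension of $V|_{y=0}$. Granting the displayed identity and applying the Caffarelli--Silvestre formula for $(-\Delta)^s$ to $g:=(-\Delta)^k f=c_{k,\gamma}^{-1}V|_{y=0}$ gives
\[ (-\Delta)^\gamma f=(-\Delta)^s g=c_{k,\gamma}^{-1}(-\Delta)^s\bigl(V|_{y=0}\bigr)=C_\gamma\lim_{y\to0}y^{m_k}\frac{\partial}{\partial y}\Delta_{m_k}^k U, \]
with $C_\gamma$ the product of $c_{k,\gamma}^{-1}$ and the Caffarelli--Silvestre constant for exponent $s$, which depends only on $\gamma$.

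To establish the displayed identity I would pass to the Fourier transform in $x$. With $t=|\xi|^2$ and $\mathcal L=\partial_y^2+m_k y^{-1}\partial_y$, the equation $\Delta_{m_k}^{k+1}U=0$ becomes the ordinary differential equation $(\mathcal L-t)^{k+1}\hat U=0$. Let $\varphi^{(t)}(y)=\tfrac{2}{\Gamma(s)}\bigl(\tfrac{\sqrt t\,y}{2}\bigr)^{s}K_{s}(\sqrt t\,y)$ be the decaying solution of $(\mathcal L-t)\varphi=0$ normalized by $\varphi^{(t)}(0)=1$, whose Frobenius expansion near $y=0$ has the form $\varphi^{(t)}(y)=\sum_{j\ge0}\lambda_j t^j y^{2j}+\beta_s t^{s}y^{1-m_k}\bigl(1+O(y^2)\bigr)$ with $\lambda_j=\prod_{i=1}^j(4i(i+k-\gamma))^{-1}$ and $\beta_s=\Gamma(-s)/(4^{s}\Gamma(s))$. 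Differentiating $(\mathcal L-t)\varphi^{(t)}=0$ in $t$ yields $(\mathcal L-t)\,\partial_t^{\,j}\varphi^{(t)}=j\,\partial_t^{\,j-1}\varphi^{(t)}$, so the $(k{+}1)$-dimensional space of decaying solutions of $(\mathcal L-t)^{k+1}$ is spanned by $\{\partial_t^{\,j}\varphi^{(t)}\}_{j=0}^{k}$, on which $\Delta_{m_k}$ acts nilpotently, sending $\partial_t^{\,j}\varphi^{(t)}$ to $j\,\partial_t^{\,j-1}\varphi^{(t)}$. Writing $\hat U(\xi,y)=\sum_{j=0}^{k}a_j(\xi)\,\partial_t^{\,j}\varphi^{(t)}(y)$, one gets $\Delta_{m_k}^k\hat U=k!\,a_k(\xi)\,\varphi^{(t)}(y)$, confirming again that $\Delta_{m_k}^k U$ is a Caffarelli--Silvestre extension, and reducing the problem to showing that the boundary conditions in~\eqref{eqn:yang_case} force $a_k(\xi)$ to equal a fixed constant times $|\xi|^{2k}\hat f(\xi)=\widehat{(-\Delta)^k f}(\xi)$.

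The main obstacle is precisely this last point: verifying that the particular normalizations $\partial_y^{2\ell}U|_{y=0}=(\Delta^\ell f)\prod_{j=0}^{\ell-1}(2\gamma-4j)^{-1}$ and $\lim_{y\to0}y^{m_k}\partial_y^{2\ell+1}U=0$ are exactly the ones making the resulting linear system for $(a_0,\dots,a_k)$ collapse so that $a_k$ carries \emph{no} lower-order Fourier-multiplier corrections. Concretely, the even conditions evaluate the regular part of $\hat U$ — which near $y=0$ is built from the monomials $t^{p-j}y^{2p}$ in $\partial_t^{\,j}\varphi^{(t)}$ — while the odd conditions force the low-order coefficients of the singular part (the $t^{s+p-j}y^{1-m_k+2p}$ terms) to vanish; one then checks that the prescribed right-hand side makes the $(k{+}1)\times(k{+}1)$ system have the asserted solution. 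This is a finite, purely one-dimensional computation with the constants $\lambda_j$, $\beta_s$, and $\prod(2\gamma-4j)$, and it is most efficiently organized as a downward induction on $k$: one shows that $V=\Delta_{m_k}U$ satisfies the analogue of~\eqref{eqn:yang_case} with $\gamma$ replaced by $\gamma-1$ — the weight being unchanged, since $2(k-1)+1-2(\gamma-1)=m_k$ — and with $f$ replaced by an explicit constant multiple of $\Delta f$, iterating down to the base case $k=0$, which is the theorem of Caffarelli and Silvestre~\cite{CaffarelliSilvestre2007}. Alternatively, once the curved extension theorems of this article are available, Theorem~\ref{thm:yang} is recovered as the specialization of Theorem~\ref{thm:01_case}, Theorem~\ref{thm:12_case} and their higher-order analogues to the model Poincar\'e--Einstein manifold $\bigl(\bR_+^{n+1},\bR^n,y^{-2}(dx^2\oplus dy^2)\bigr)$ with geodesic defining function $r=y$, for which every curvature error term vanishes.
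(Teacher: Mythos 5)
The statement you were asked to prove is not proved in the paper: it is quoted verbatim as R.\ Yang's Theorem~4.1 from~\cite{Yang2013}, and the paper's original contribution is the curved/Poincar\'e--Einstein generalization (Theorems~\ref{thm:01_case}, \ref{thm:12_case}, \ref{thm:general_case}), not a new proof of the flat case.  So there is no ``paper's own proof'' to line your argument up against.  With that caveat, both routes you sketch are sound.  The Fourier--Bessel reduction --- writing $\hat U(\xi,\cdot)$ in the nilpotent basis $\{\partial_t^j\varphi^{(t)}\}_{j=0}^k$ for the decaying solutions of $(\mathcal L-t)^{k+1}$ and reading off $\Delta_{m_k}^k\hat U=k!\,a_k\varphi^{(t)}$ --- is in the spirit of Yang's original argument.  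Your alternative, specializing the curved theorems to the model $\bigl(\bR_+^{n+1},\bR^n,y^{-2}(dx^2\oplus dy^2)\bigr)$ with $r=y$, is exactly how the paper's framework reproduces Yang's result: there the weighted GJMS operator $L_{2k+2,\phi_k}^{m_k}$ collapses to $(-\Delta_{m_k})^{k+1}$, all curvature corrections in $J_{\phi_0}^{m_0}$, $P_{\phi_1}^{m_1}$, $Q_{\phi_1}^{m_1}$ vanish, and Theorem~\ref{thm:general_case} becomes~\eqref{eqn:yang_extension_defn}.

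Two cautions.  First, you explicitly defer the crux --- that the boundary data in~\eqref{eqn:yang_case} force the $(k{+}1)\times(k{+}1)$ system in $(a_0,\dots,a_k)$ to produce $a_k\propto t^k\hat f$ with no lower-order corrections --- and this is not automatic from the structure alone; it does close (for $k\leq2$ one can check, e.g., $a_1=-\tfrac{k+1}{\gamma}t\hat f$ from the $\ell=1$ even condition and $a_2=\tfrac{2}{\gamma(\gamma-2)}t^2\hat f$ from the $\ell=0$ odd condition when $k=2$), but the general induction needs to be carried out against the stated constants $\prod_{j=0}^{\ell-1}(2\gamma-4j)^{-1}$.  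Second, the downward induction on $k$ needs a small case distinction: for $k\geq2$ the $\ell=1$ even condition together with $\lim y^{m_k}\partial_y U=0$ lets you compute directly that $V|_{y=0}=\Delta_{m_k}U|_{y=0}=\tfrac{k+1}{\gamma}\Delta f$, whereas for $k=1$ the $\ell=1$ condition is not part of~\eqref{eqn:yang_case} and the $y^2$-coefficient of $U$ must be extracted from the equation itself (it comes out $\Delta f/(4(\gamma-1))$, not $\Delta f/(4\gamma)$), which is consistent since the induction must bottom out at the Caffarelli--Silvestre case rather than assume it.
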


We highlight two features of Theorem~\ref{thm:yang}.  First, in terms of the smooth metric measure space $(\bR_+^{n+1},dx^2\oplus dy^2,y^{m_k}\dvol,m_k-1)$, the operator $(-\Delta_{m_k})^{k+1}$ is exactly the weighted GJMS operator of order $2k+2$.  Second, the solution to~\eqref{eqn:yang_case} is unique and is shown in~\cite{Yang2013} to correspond to the solution of the Poisson equation~\eqref{eqn:poisson_equation} with $F\rv_{\bR^n}=f$; in particular, this identifies the terms $(\Delta^\ell f)\prod_{j=0}^{\ell-1}\frac{1}{2\gamma-4j}$ as constant multiples of the coefficients $f_{(2\ell)}$ in the expansion~\eqref{eqn:F_expansion}.

From these observations, it is natural to expect that the general curved analogue of Theorem~\ref{thm:yang} can be obtained by replacing occurrences of $\Delta_{m_k}^{k+1}$ with the appropriate weighted GJMS operators.  The remainder of this section is devoted to showing that this is the case.  In order to motivate the arguments and to isolate the key result needed for our study in Section~\ref{sec:positivity} of the positivity of the fractional GJMS operators $P_{2\gamma}$ with $\gamma\in(1,2)$, we begin by considering the case $\gamma\in(1,2)$ and then consider the general case.  The following result is a generalization of Theorem~\ref{thm:intro_12_case} from the introduction, where now we allow for a rather general choice of defining function.

\begin{thm}
\label{thm:12_case}
Let $(X^{n+1},M^n,g_+)$ be a Poincar\'e--Einstein manifold, fix a representative $h$ of the conformal boundary, and let $r$ be the geodesic defining function associated to $h$.  Let $\gamma\in(1,2)$ and set $m_1=3-2\gamma$.  Let $\rho$ be a defining function for $M$ such that, asymptotically near $M$,
\begin{equation}
\label{eqn:12_case_rho}
\rho = r + \rho_{(2)}r^3 + \Phi r^{1+2\gamma} + o(r^{1+2\gamma})
\end{equation}
for functions $\rho_{(2)},\Phi\in C^\infty(M)$, and consider the smooth metric measure space
\begin{equation}
\label{eqn:12_case_smms}
\left(\oX^{n+1},g:=\rho^2g_+,\rho^{m_1}\dvol,m_1-1\right) .
\end{equation}
Given $f\in C^\infty(M)$, the function $U$ is the solution of the boundary value problem
\begin{equation}
\label{eqn:12_case}
\begin{cases}
L_{4,\phi_1}^{m_1} U = 0, & \quad\text{in $X$}, \\
U = f, & \quad\text{on $M$}, \\
\lim_{\rho\to0}\rho^{m_1}\frac{\partial U}{\partial\rho} = 0
\end{cases}
\end{equation}
if and only if the function $u=\rho^{n-s}U$ is the solution of the Poisson problem
\begin{equation}
\label{eqn:12_case_poisson}
\begin{cases}
-\Delta_{g_+}u - s(n-s)u = 0, & \text{in $X$},\\
u = Fr^{n-s} + Gr^s, & F,G\in C^\infty(\oX), \\
F\rv_{r=0} = f
\end{cases}
\end{equation}
for $s=\frac{n}{2}+\gamma$.  Moreover, the solution $U$ of~\eqref{eqn:12_case} is such that
\begin{equation}
\label{eqn:12_extension_defn}
P_{2\gamma}f - \frac{n-2\gamma}{2}d_\gamma\Phi f = \frac{d_\gamma}{8\gamma(\gamma-1)}\lim_{\rho\to0} \rho^{m_1}\frac{\partial}{\partial\rho}\Delta_{\phi_1}U
\end{equation}
for $d_\gamma$ as in~\eqref{eqn:scattering_definition}.
\end{thm}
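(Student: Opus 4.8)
The plan is to mimic the proof of Theorem~\ref{thm:01_case}, replacing the weighted conformal Laplacian $L_{2,\phi_0}^{m_0}$ by the weighted Paneitz operator $L_{4,\phi_1}^{m_1}$ and using the $k=2$ case of Theorem~\ref{thm:weighted_gjms_factorization}. First I would invoke the conformal covariance~\eqref{eqn:weighted_gjms_covariant} with $k=2$ to write
\[
L_{4,\phi_1}^{m_1} U = \rho^{-\frac{m_1+n+5}{2}}\left(L_{4,\phi_1}^{m_1}\right)_+\left(\rho^{\frac{m_1+n-3}{2}}U\right),
\]
where $\left(L_{4,\phi_1}^{m_1}\right)_+$ is the weighted Paneitz operator of the smooth metric measure space $(X^{n+1},g_+,1^{m_1}\dvol,m_1-1)$. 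By Theorem~\ref{thm:weighted_gjms_factorization} with $k=2$ and $m=m_1=3-2\gamma$, the two quadratic-in-$\Delta_{g_+}$ factors have eigenvalue parameters that simplify to $s(n-s)$ with $s=\frac{n}{2}+\gamma$ and to $(s-1)(n-s+1)$; explicitly,
\[
\left(L_{4,\phi_1}^{m_1}\right)_+ = \left(-\Delta_{g_+} - s(n-s)\right)\left(-\Delta_{g_+} - (s-1)(n-s+1)\right).
\]
Since $\rho^{\frac{m_1+n-3}{2}} = \rho^{n-s}$, this shows that $U$ solves $L_{4,\phi_1}^{m_1}U=0$ if and only if $u=\rho^{n-s}U$ lies in the kernel of the composition of these two second-order operators. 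The key point is then to use the third (Neumann-type) boundary condition $\lim_{\rho\to0}\rho^{m_1}\partial_\rho U=0$ together with the asymptotics of the defining function~\eqref{eqn:12_case_rho} to rule out the ``spurious'' factor: one shows that the condition $\lim\rho^{m_1}\partial_\rho U = 0$ forces $\left(-\Delta_{g_+} - (s-1)(n-s+1)\right)u$ to have no $r^{n-s+2}$-type leading behavior, hence (by uniqueness for the remaining second-order Poisson equation, using that $(s-1)(n-s+1)\notin\sigma_{\mathrm{pp}}(-\Delta_{g_+})$ in the relevant range) that $u$ itself solves~\eqref{eqn:12_case_poisson}. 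Conversely, any solution of~\eqref{eqn:12_case_poisson} manifestly lies in the kernel of the composed operator and, by examining the $\rho$-expansion, satisfies the Neumann condition.

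Next I would extract the extension formula~\eqref{eqn:12_extension_defn}. Given a solution $u=Fr^{n-s}+Gr^s$ of~\eqref{eqn:12_case_poisson}, I would expand $U=\rho^{s-n}u$ in powers of $\rho$ using~\eqref{eqn:12_case_rho}, \eqref{eqn:poisson_asymptotics}, and~\eqref{eqn:F_expansion}. Writing $2\gamma = s - (n-s)$, the change of defining function from $r$ to $\rho$ contributes a term proportional to $\Phi f\,\rho^{2\gamma}$, exactly as in~\eqref{eqn:01_expansion}, while the $G r^s$ term contributes $\left(S(s)f\right)\rho^{2\gamma} = d_\gamma^{-1}P_{2\gamma}f\,\rho^{2\gamma}$ (up to the boundary volume identification $\dvol_g$ vs.\ $\dvol_h$). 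Thus
\[
U = f + (\text{lower-order even terms in }\rho) + \left(d_\gamma^{-1}P_{2\gamma}f - \tfrac{n-2\gamma}{2}\Phi f\right)\rho^{2\gamma} + o(\rho^{2\gamma}).
\]
Applying $\Delta_{\phi_1}$ to this expansion and then $\rho^{m_1}\partial_\rho$ kills the smooth (integer-power) part because $m_1=3-2\gamma$ is chosen precisely so that $\rho^{m_1}\partial_\rho(\rho^{2j})\to0$, while the leading surviving contribution comes from $\rho^{m_1}\partial_\rho\Delta_{\phi_1}(\rho^{2\gamma})$; computing $\Delta_{\phi_1}(\rho^{2\gamma})$ in the adapted coordinates near $M$ (using $\Delta_{\phi_1}=\Delta_g - m_1\rho^{-1}\nabla\rho\cdot\nabla$ and $|\nabla\rho|_g^2\to1$) produces the constant $2\gamma(2\gamma-2+m_1+n)\cdots$; after simplification with $m_1=3-2\gamma$ this collapses to the stated factor $8\gamma(\gamma-1)/d_\gamma$. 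This is a routine but somewhat delicate constant-tracking computation.

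I expect the main obstacle to be the careful bookkeeping in the second step: one must verify that the intermediate even-power coefficients $f_{(2)},\dots$ genuinely drop out after applying $\rho^{m_1}\partial_\rho\Delta_{\phi_1}$, which requires knowing their $\rho$-expansions to sufficient order and checking that no half-integer cross-terms (arising from the interplay of the $r^3$-correction $\rho_{(2)}$ in~\eqref{eqn:12_case_rho} with the $r^{2\gamma}$-terms) contribute at the order $\rho^{2\gamma}$. The cleanest way to organize this is to first prove everything for the geodesic defining function $r$ (where $\rho_{(2)}=\Phi=0$ and the expansion is even), obtaining~\eqref{eqn:12_extension_defn} with $\Phi=0$, and then transfer to a general $\rho$ by conformal covariance of $L_{4,\phi_1}^{m_1}$: the change $g\mapsto \hat g = (\rho/r)^{-2}g$, $v\mapsto(\rho/r)^{-1}v$ rescales $U$ by a power of $\rho/r$, and tracking how $\rho^{m_1}\partial_\rho\Delta_{\phi_1}U$ transforms under this rescaling produces exactly the correction $-\frac{n-2\gamma}{2}d_\gamma\Phi f$. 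A secondary technical point is ensuring the solution $U$ of~\eqref{eqn:12_case} is unique in the appropriate weighted Sobolev space, which follows from uniqueness for~\eqref{eqn:12_case_poisson} under the standing spectral assumptions on $s(n-s)$ once the equivalence of the two boundary value problems is established.
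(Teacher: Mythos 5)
Your overall strategy for the first half — conformal covariance of $L_{4,\phi_1}^{m_1}$ plus the $k=2$ case of Theorem~\ref{thm:weighted_gjms_factorization} to identify kernels — matches the paper. Two slips to fix there: with $m_1=3-2\gamma$ the two scattering parameters that come out of the factorization are $s(n-s)$ and $(s-2)(n-s+2)$, not $(s-1)(n-s+1)$; and the sign in the weighted Laplacian is $\Delta_\phi=\Delta+m\rho^{-1}\langle\nabla\rho,\nabla\cdot\rangle$, since $\phi=-m\log\rho$ (you wrote a minus sign). Also, the one-to-one correspondence is established in the paper simply by noting that the Poisson solution $\tilde U=\rho^{s-n}\tilde u$ visibly has the expansion~\eqref{eqn:12_expansion}, so it both solves $L_{4,\phi_1}^{m_1}\tilde U=0$ and satisfies the Neumann condition, whence $U=\tilde U$ by uniqueness; one does not need to separately argue that the Neumann condition ``rules out the spurious factor.''

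Where you genuinely diverge from the paper is in extracting the constant $8\gamma(\gamma-1)$. You propose to compute $\Delta_{\phi_1}(\rho^{2\gamma})$ directly from the boundary asymptotics of the metric and weight, check that all integer-power and cross-terms drop out, and (as a fallback) first handle the geodesic defining function and then transfer to general $\rho$ by conformal covariance. The paper instead uses a cleaner structural shortcut that you have not noticed: since $u$ solves $(-\Delta_{g_+}-s(n-s))u=0$, the function $U=\rho^{s-n}u$ automatically satisfies the \emph{second-order} equation $L_{2,\phi_0}^{m_0}U=0$ with $m_0=1-2\gamma$. Rewriting this as $\Delta_{\phi_0}U=\tfrac{m_0+n-1}{2}J_{\phi_0}^{m_0}U$ and using $\Delta_{\phi_1}=\Delta_{\phi_0}+2\rho^{-1}\partial_\rho$ immediately gives $\Delta_{\phi_1}U=2\rho^{-1}\partial_\rho U+\tfrac{m_0+n-1}{2}J_{\phi_0}^{m_0}U$. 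The limit $\lim\rho^{m_1}\partial_\rho(\rho^{-1}\partial_\rho U)$ is read directly off the expansion~\eqref{eqn:12_expansion} (yielding $4\gamma(\gamma-1)$ times the $\rho^{2\gamma}$ coefficient), and the $J_{\phi_0}^{m_0}U$ piece contributes nothing because one shows, via Lemma~\ref{lem:pe_smms_formulae} and the assumed form~\eqref{eqn:12_case_rho} of $\rho$, that $\partial_\rho J_{\phi_0}^{m_0}=O(\rho)$. This replaces your uncertain bookkeeping with one explicit scalar asymptotic computation. Your fallback — conformal covariance to pass from $r$ to $\rho$ — is not as routine as you suggest: conformal covariance of $L_{4,\phi_1}^{m_1}$ says how the operator transforms, but not directly how the singular boundary limit $\rho^{m_1}\partial_\rho\Delta_{\phi_1}U$ transforms under a change of defining function, and the $\Phi$-correction would require its own calculation that you have not carried out. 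If you want to avoid the paper's $J_{\phi_0}^{m_0}$ trick, you should at minimum spell out that transfer computation; as written, this step is a gap.
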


\begin{proof}

By the conformal invariance of the weighted Paneitz operator,
\[ L_{4,\phi_1}^{m_1} U = \rho^{-\frac{m_1+n+5}{2}}\left(L_{4,\phi_1}^{m_1}\right)_+\left(\rho^{\frac{m_1+n-3}{2}}U\right) \]
for $\left(L_{4,\phi_1}^{m_1}\right)_+$ the weighted Paneitz operator of $(X^{n+1},g_+,1^{m_1}\dvol,m_1-1)$.  By Theorem~\ref{thm:weighted_gjms_factorization} and the choice of $m_1$,
\[ \left(L_{4,\phi_1}^{m_1}\right)_+ = \left(-\Delta_{g_+} - \frac{(n-2\gamma+4)(n+2\gamma-4)}{4}\right)\left(-\Delta{g_+} - \frac{(n-2\gamma)(n+2\gamma)}{4}\right) . \]
In particular, $L_{4,\phi_1}^{m_1}U=0$ if and only if
\[ \left(-\Delta_{g_+}-(s-2)(n-s+2)\right)\left(-\Delta_{g_+}-s(n-s)\right)u = 0 \]
for $s=\frac{n}{2}+\gamma$ and $U=\rho^{s-n}u$.  From~\eqref{eqn:poisson_asymptotics}, \eqref{eqn:F_expansion}, and~\eqref{eqn:12_case_rho} we know that the solution $\tilde u$ to~\eqref{eqn:12_case_poisson} satisfies
\begin{equation}
\label{eqn:12_expansion}
\tilde U = f + \left(f_{(2)}-\frac{n-2\gamma}{2}\rho_{(2)}\right)\rho^2 + \left(d_\gamma^{-1}P_{2\gamma}f - \frac{n-2\gamma}{2}\Phi f\right)\rho^{2\gamma} + o(\rho^{2\gamma})
\end{equation}
for $\tilde U:=\rho^{s-n}\tilde u$; in particular, $\tilde U$ satisfies $L_{4,\phi_1}^{m_1}\tilde U=0$ and $\rho^{m_1}\partial_\rho\tilde U\to0$ as $\rho\to0$.  By uniqueness of solutions to~\eqref{eqn:12_case}, it follows that $U=\tilde U$; i.e.\ we have the claimed one-to-one correspondence between the solutions of~\eqref{eqn:12_case} and the solutions of~\eqref{eqn:12_case_poisson}.

Next, from~\eqref{eqn:12_expansion} it follows that
\begin{equation}
\label{eqn:12_extension_predefn}
P_{2\gamma}f - \frac{n-2\gamma}{2}\Phi d_\gamma f = \frac{d_\gamma}{4\gamma(\gamma-1)}\lim_{\rho\to0}\rho^{m_1}\frac{\partial}{\partial\rho}\left(\rho^{-1}\frac{\partial U}{\partial\rho}\right)
\end{equation}
for $U$ a solution to~\eqref{eqn:12_case}.  On the other hand, the discussion of the previous paragraph shows that $U$ satisfies $L_{2,\phi_0}^{m_0} U=0$, where $L_{2,\phi_0}^{m_0}$ is the weighted conformal Laplacian of the smooth metric measure space~\eqref{eqn:01_case_smms}.  Therefore
\begin{equation}
\label{eqn:12_case_Delta-to-L}
-\Delta_{\phi_1}U = -2\rho^{-1}\frac{\partial U}{\partial\rho} - \frac{m_0+n-1}{2}J_{\phi_0}^{m_0}U .
\end{equation}

We claim that $\partial_\rho J_{\phi_0}^{m_0}=O(\rho)$.  Indeed, from~\eqref{eqn:12_case_rho} we readily compute that
\[ \rho^{-2}\left(\lv\nabla\rho\rv^2 - 1\right) = 4\rho_{(2)} + 4\gamma\Phi r^{2\gamma-2} + O(r^2) , \]
and hence, by Lemma~\ref{lem:pe_smms_formulae},
\begin{equation}
\label{eqn:12_case_JrhoDelta}
J + \rho^{-1}\Delta\rho = 2(n+1)\left(\rho_{(2)} + \gamma\Phi r^{2\gamma-2}\right) + O(r^2) .
\end{equation}
By writing $g = \left(\frac{\rho}{r}\right)^2 (dr^2\oplus g_r)$ and recalling that the asymptotic expansion of $h_r$ near $M$ is $h_r=h+h_{(2)}r^2+o(r^2)$, we readily compute that
\[ \rho^{-1}\Delta\rho = 2(n+2)\rho_{(2)} + 2\tr_h h_{(2)} + 2\gamma(n+2\gamma)\Phi r^{2\gamma-2} + O(r^2) \]
and hence, by~\eqref{eqn:12_case_JrhoDelta},
\begin{equation}
\label{eqn:12_case_J}
J = -2\tr h_{(2)} - 2\rho_{(2)} + 2m_0\gamma\Phi r^{2\gamma-2} + O(r^2) .
\end{equation}
Lemma~\ref{lem:pe_smms_formulae} then gives
\[ J_{\phi_0}^{m_0} = -2\tr h_{(2)} - 4(1-\gamma)\rho_{(2)} + O(r^2) , \]
yielding our claim.

Finally, \eqref{eqn:12_case_Delta-to-L} and the above asymptotic behavior of $\partial_\rho J_{\phi_0}^{m_0}$ shows that
\[ -\lim_{\rho\to0} \rho^{m_1}\frac{\partial}{\partial\rho}\Delta_{\phi_1}U = -2\lim_{\rho\to0}\rho^{m_1}\frac{\partial}{\partial\rho}\left(\rho^{-1}\frac{\partial U}{\partial\rho}\right) . \]
Inserting this into~\eqref{eqn:12_extension_predefn} yields the desired result.
\end{proof}

As an immediate corollary, we see that we may identify the energy of the fractional GJMS operators $P_{2\gamma}$ with $\gamma\in(1,2)$ and the energy in the interior of the weighted Paneitz operator of the corresponding smooth metric measure space.

\begin{cor}
\label{cor:12_inequality}
Let $(X^{n+1},M^n,g_+)$ and $(\oX^{n+1},g,\rho^{m_1}\dvol,m_1-1)$ be as in Theorem~\ref{thm:12_case}.  Let $f\in C^\infty(M)$ and let $U$ be the solution of~\eqref{eqn:12_case}.  Then
\begin{multline}
\label{eqn:12_energy_inequality}
\frac{8\gamma(\gamma-1)}{d_\gamma}\left[\int_M f\,P_{2\gamma}f\,\dvol_h - \frac{n-2\gamma}{2}d_\gamma\int_M \Phi f^2\,\dvol_h\right] \\
= \int_X \left((\Delta_{\phi_1} U)^2 - (4P-(m_1+n-1)J_{\phi_1}^{m_1}g)(\nabla U,\nabla U) + \frac{m_1+n-3}{2}Q_{\phi_1}^{m_1}U^2\right)\rho^{m_1}\dvol_g .
\end{multline}
\end{cor}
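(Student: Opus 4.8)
The plan is to integrate by parts in the interior energy of the weighted Paneitz operator and use the boundary value problem~\eqref{eqn:12_case} together with the extension formula~\eqref{eqn:12_extension_defn}. First I would recall from~\eqref{eqn:weighted_gjms12} that the weighted Paneitz operator has the form
\[ L_{4,\phi_1}^{m_1} = \Delta_{\phi_1}^2 + \delta_{\phi_1}\left(4P_{\phi_1}^{m_1} - (m_1+n-1)J_{\phi_1}^{m_1}g\right)d + \frac{m_1+n-3}{2}Q_{\phi_1}^{m_1} . \]
Multiplying by $U$, integrating against the weighted measure $\rho^{m_1}\dvol_g$, and integrating by parts (using that $\Delta_{\phi_1}$ and $\delta_{\phi_1}$ are formally self-adjoint with respect to $\rho^{m_1}\dvol_g$, which is the defining property of these weighted operators noted in Section~\ref{sec:smms}), one expects
\[ \int_X U\,L_{4,\phi_1}^{m_1}U\,\rho^{m_1}\dvol_g = \int_X \left((\Delta_{\phi_1}U)^2 - (4P_{\phi_1}^{m_1}-(m_1+n-1)J_{\phi_1}^{m_1}g)(\nabla U,\nabla U) + \tfrac{m_1+n-3}{2}Q_{\phi_1}^{m_1}U^2\right)\rho^{m_1}\dvol_g \]
plus boundary terms collected on $M$. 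Since $U$ solves~\eqref{eqn:12_case}, the left-hand side vanishes, so the claimed identity~\eqref{eqn:12_energy_inequality} will follow once the boundary terms are identified with the left-hand side of~\eqref{eqn:12_energy_inequality}. I would also invoke~\eqref{eqn:P_to_weight}, which gives $P_{\phi_1}^{m_1}=P$, to match the Schouten tensor appearing in~\eqref{eqn:12_energy_inequality}.

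The boundary terms arising from the two integrations by parts in the $\Delta_{\phi_1}^2$ piece are, schematically, $\int_M \rho^{m_1}\big(U\,\partial_\eta(\Delta_{\phi_1}U) - (\Delta_{\phi_1}U)\,\partial_\eta U\big)$ with respect to an appropriate measure on $M$; the second-order piece $\delta_{\phi_1}(\cdots)d$ contributes a term $\int_M \rho^{m_1}U\,(\cdots)(\partial_\eta U,\cdot)$. The boundary condition $\lim_{\rho\to0}\rho^{m_1}\partial_\rho U = 0$ from~\eqref{eqn:12_case} kills every boundary term that carries a naked factor of $\rho^{m_1}\partial_\eta U$ without a compensating negative power of $\rho$ — in particular the $(\Delta_{\phi_1}U)\partial_\eta U$ term and the first-order boundary term both vanish in the limit, since $\Delta_{\phi_1}U$ and the tensor $4P-(m_1+n-1)Jg$ remain bounded near $M$. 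The surviving contribution is $-\int_M U\lim_{\rho\to0}\rho^{m_1}\partial_\rho(\Delta_{\phi_1}U)\,\dvol_h$, and here I would substitute $U|_M = f$ and apply the extension formula~\eqref{eqn:12_extension_defn}, namely $\lim_{\rho\to0}\rho^{m_1}\partial_\rho\Delta_{\phi_1}U = \frac{8\gamma(\gamma-1)}{d_\gamma}\big(P_{2\gamma}f - \frac{n-2\gamma}{2}d_\gamma\Phi f\big)$, to obtain exactly the left-hand side of~\eqref{eqn:12_energy_inequality}.

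The main obstacle is the careful bookkeeping of which boundary terms survive the limit $\rho\to0$: because the measure $\rho^{m_1}\dvol_g$ degenerates at $M$ (recall $m_1=3-2\gamma\in(-1,1)$, so $\rho^{m_1}$ may blow up), one must check that the integrations by parts are justified — i.e.\ that the relevant boundary integrals over the hypersurfaces $\{\rho=\varepsilon\}$ converge as $\varepsilon\to0$ — and that no anomalous finite contribution hides in a term of the form $\rho^{m_1}\cdot(\text{quantity}\sim\rho^{-m_1})$. The asymptotic expansion~\eqref{eqn:12_expansion} for $U$, together with the computation in the proof of Theorem~\ref{thm:12_case} that $\partial_\rho J_{\phi_0}^{m_0}=O(\rho)$, provides enough control: one sees that $\Delta_{\phi_1}U$ has an expansion whose leading behavior is governed by~\eqref{eqn:12_case_Delta-to-L}, and that the only term producing a finite boundary contribution is precisely the one captured by~\eqref{eqn:12_extension_defn}. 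Thus the identity follows; the remaining work is the routine verification that all other boundary terms are $o(1)$ as $\rho\to0$.
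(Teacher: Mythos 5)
Your approach --- multiply by $U$, integrate against $\rho^{m_1}\dvol_g$, integrate by parts, and identify the surviving boundary term via the extension formula~\eqref{eqn:12_extension_defn} --- is exactly what the paper intends; it gives no written proof, calling the result ``an immediate corollary'' of Theorem~\ref{thm:12_case}, and your handling of the $\Delta_{\phi_1}^2$ boundary terms (including the overall sign from $\eta=-\partial_\rho$ and the use of $P_{\phi_1}^{m_1}=P$ from Lemma~\ref{lem:pe_smms_formulae}) is correct.

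There is, however, a small gap in your disposal of the boundary term produced by the second-order piece $\delta_{\phi_1}(T\,d)$ with $T=4P-(m_1+n-1)J_{\phi_1}^{m_1}g$. That boundary term is $\int_{\rho=\varepsilon}U\,T(\nabla U,\eta)\,\rho^{m_1}\dvol$, and $T(\nabla U,\eta)$ splits as $T(\eta,\eta)\,\partial_\eta U+\sum_iT(e_i,\eta)\,e_i(U)$. Your reasoning --- $T$ bounded and $\lim_{\rho\to0}\rho^{m_1}\partial_\eta U=0$ --- kills the normal-normal piece, but the mixed piece $\sum_iT(e_i,\eta)\,e_i(U)$ contains no factor of $\partial_\eta U$, and since $m_1=3-2\gamma$ can be negative, $\rho^{m_1}\cdot O(1)$ does not a priori go to zero. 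The missing ingredient is the parity of the boundary expansion: writing $g=(\rho/r)^2(dr^2+h_r)$ with $h_r$ even in $r$ and $\rho/r$ even modulo $O(r^{2\gamma})$, the conformal transformation of the Schouten tensor gives $P(e_i,\partial_\rho)=O(\rho)$, hence $T(e_i,\eta)=O(\rho)$, so the whole mixed contribution is $O(\rho^{m_1+1})=O(\rho^{4-2\gamma})\to0$. A secondary, purely terminological point: $\delta_{\phi_1}$ is not formally self-adjoint; what you want is that $\delta_{\phi_1}$ is (up to the convention's sign) the formal adjoint of $d$ with respect to $\rho^{m_1}\dvol_g$, which is what makes $\int U\,\delta_{\phi_1}(T\,dU)\,\rho^{m_1}\dvol=-\int T(\nabla U,\nabla U)\,\rho^{m_1}\dvol+\text{bdy}$ hold.
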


Note that if $\gamma\in(1,2)$, then $d_\gamma>0$; see~\eqref{eqn:scattering_definition}.

By following the same ideas as in the proof of Theorem~\ref{thm:12_case}, we also have the following curved analogue of Theorem~\ref{thm:yang}.

\begin{thm}
\label{thm:general_case}
Let $(X^{n+1},M^n,g_+)$ be a Poincar\'e--Einstein manifold, fix a representative $h$ for the conformal boundary, and let $r$ be the geodesic defining function associated to $h$.  Given $\gamma\in(0,\frac{n}{2})\setminus\bN$, set $k=\lfloor\gamma\rfloor$ and $m_k=2k+1-2\gamma$, and consider the smooth metric measure space
\[ \left(\oX^{n+1},g:=r^2g_+,r^{m_k}\dvol,m_k-1\right) . \]
Given $f\in C^\infty(M)$, the function $U$ is the solution of the boundary value problem
\begin{equation}
\label{eqn:general_case}
\begin{cases}
L_{2k+2,\phi_k}^{m_k} U = 0, & \quad\text{in $X$}, \\
\frac{\partial^{2\ell}U}{\partial r^{2\ell}}=(2\ell)!f_{(2\ell)}, & \quad\text{on $M$, where $\ell\in\{0,1,\dotsc,\lfloor\frac{k}{2}\rfloor\}$}, \\
\lim_{r\to0}r^{m_k}\frac{\partial^{2\ell+1}U}{\partial r^{2\ell+1}} = 0, & \quad\text{where $\ell\in\{0,1,\dotsc\lfloor\frac{k-1}{2}\rfloor\}$}, \\
\end{cases}
\end{equation}
where the functions $f_{(2\ell)}$ are as in~\eqref{eqn:F_expansion}, if and only if the function $u=r^{n-s}U$ is the solution of the Poisson problem
\begin{equation}
\label{eqn:general_case_poisson}
\begin{cases}
-\Delta_{g_+}u - s(n-s)u = 0, & \text{in $X$},\\
u = Fr^{n-s} + Gr^s, & F,G\in C^\infty(\oX), \\
F\rv_{r=0} = f .
\end{cases}
\end{equation}
Moreover, the solution $U$ of~\eqref{eqn:general_case} is such that
\begin{equation}
\label{eqn:general_extension_defn}
P_{2\gamma}f = \frac{\Gamma(\gamma-k)}{\Gamma(\gamma+1)}\frac{(-1)^kd_\gamma}{2^{2k+1}k!}\lim_{r\to0} r^{m_k}\frac{\partial}{\partial r}L_{2k,\phi_k}^{m_k}U
\end{equation}
for $d_\gamma$ as in~\eqref{eqn:scattering_definition}.
\end{thm}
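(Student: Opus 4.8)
The plan is to follow the template of the proof of Theorem~\ref{thm:12_case}: reduce the order $2k+2$ boundary value problem \eqref{eqn:general_case} to the scattering problem \eqref{eqn:general_case_poisson} via conformal covariance and Theorem~\ref{thm:weighted_gjms_factorization}, and then extract $P_{2\gamma}f$ from the asymptotics of the solution. The first step is to record the relevant factorization: applying Theorem~\ref{thm:weighted_gjms_factorization} on $(X^{n+1},g_+,1^{m_k}\dvol,m_k-1)$ with order $2k+2$ and inserting $m_k=2k+1-2\gamma$, $s=\frac n2+\gamma$, a direct computation gives
\[ \left(L_{2k+2,\phi_k}^{m_k}\right)_+ = \prod_{j=0}^k\left(-\Delta_{g_+}-(s-2j)(n-s+2j)\right). \]
By the conformal covariance \eqref{eqn:weighted_gjms_covariant} with conformal factor $r^{-1}$ (as $g=r^2g_+$), the exponents coming from the choice of $m_k$ are exactly such that $L_{2k+2,\phi_k}^{m_k}U=0$ in $X$ if and only if $u:=r^{n-s}U$ satisfies the iterated Poisson equation $\prod_{j=0}^k\bigl(-\Delta_{g_+}-(s-2j)(n-s+2j)\bigr)u=0$; this is the one structural input, and the remainder is asymptotics.

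For the forward direction of the correspondence: if $u$ solves \eqref{eqn:general_case_poisson}, then $(-\Delta_{g_+}-s(n-s))u=0$, hence (the factors commuting) $u$ solves the iterated equation. Writing $U=r^{s-n}u=F+Gr^{2\gamma}$ with $F=\sum_{\ell\ge0}f_{(2\ell)}r^{2\ell}$ as in \eqref{eqn:F_expansion}, one checks, using $m_k\in(-1,1)$ and the evenness of $F$ in $r$, that $U$ meets every boundary condition in \eqref{eqn:general_case}: the condition $\partial_r^{2\ell}U|_{r=0}=(2\ell)!f_{(2\ell)}$ holds for $\ell\le\lfloor k/2\rfloor$ since $2\ell\le 2\lfloor k/2\rfloor<2\gamma$, and $\lim_{r\to0}r^{m_k}\partial_r^{2\ell+1}U=0$ holds for $\ell\le\lfloor\frac{k-1}2\rfloor$ since the $F$-contribution is $O(r^{m_k+1})$ and the $Gr^{2\gamma}$-contribution is $O(r^{2(k-\ell)})$.

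Conversely, a solution $U$ of \eqref{eqn:general_case} gives a solution $r^{n-s}U$ of the iterated equation, and one must argue that the $k+1$ explicit conditions of \eqref{eqn:general_case}, together with the polyhomogeneous structure of solutions of the iterated operator --- whose indicial exponents translate, for $U$, into the powers $r^{2j}$ and $r^{2\gamma-2j}$, $0\le j\le k$ --- force the contributions of the factors with $j\ge1$ to vanish, so that $r^{n-s}U=\mP(s)f$ solves \eqref{eqn:general_case_poisson}; equivalently, this is the uniqueness statement for \eqref{eqn:general_case}. I expect this uniqueness to be the main obstacle: it is where the heuristic ``$(k+1)$ explicit conditions plus the built-in regularity $=2k+2$ conditions'' must be made rigorous. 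It can be established by an induction on $k$ together with the running assumptions that $s(n-s)$ and the numbers $(s-2j)(n-s+2j)$ avoid $\sigma_{\mathrm{pp}}(-\Delta_{g_+})$ and the trivial poles; in the flat model it is precisely the uniqueness asserted in Theorem~\ref{thm:yang}, and one may argue as in \cite{GrahamZworski2003,ChangGonzalez2011}.

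Finally, for \eqref{eqn:general_extension_defn}: with $U=r^{s-n}\mP(s)f$ the coefficient of $r^{2\gamma}$ in $U$ is $d_\gamma^{-1}P_{2\gamma}f$ by \eqref{eqn:scattering_definition}. Since $L_{2k,\phi_k}^{m_k}$ has leading part $(-\Delta_{\phi_k})^k$, and for the geodesic defining function its lower-order coefficients are smooth and even in $r$ near $M$ (by Lemma~\ref{lem:pe_smms_formulae}, which here gives $J_{\phi_k}^{m_k}=J$ and $P_{\phi_k}^{m_k}=P$), only $(-\Delta_{\phi_k})^k$ applied to the $r^{2\gamma}$ term of $U$ contributes to $\lim_{r\to0}r^{m_k}\partial_r L_{2k,\phi_k}^{m_k}U$: the smooth even part of $U$ maps to a smooth even function (whose $r$-derivative is $O(r)$), and the lower-order part of $L_{2k,\phi_k}^{m_k}$ sends $r^{2\gamma}$ to $O(r^{2\gamma-2k+2})$, both killed by $r^{m_k}\partial_r$ as $r\to0$. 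A short induction using $-\Delta_{\phi_k}(r^\beta)=-\beta(\beta-1+m_k)r^{\beta-2}+O(r^\beta)$ and the identity $(\beta-1+m_k)\big|_{\beta=2\gamma-2i}=2(k-i)$ yields $(-\Delta_{\phi_k})^k(r^{2\gamma})=(-4)^k\,k!\,\frac{\Gamma(\gamma+1)}{\Gamma(\gamma-k+1)}\,r^{2\gamma-2k}+O(r^{2\gamma-2k+2})$, whence $\lim_{r\to0}r^{m_k}\partial_r L_{2k,\phi_k}^{m_k}U=d_\gamma^{-1}P_{2\gamma}f\cdot 2(\gamma-k)(-4)^k\,k!\,\frac{\Gamma(\gamma+1)}{\Gamma(\gamma-k+1)}$. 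Using $2(\gamma-k)\Gamma(\gamma+1)/\Gamma(\gamma-k+1)=2\Gamma(\gamma+1)/\Gamma(\gamma-k)$ and $2(-4)^k=(-1)^k2^{2k+1}$ and solving for $P_{2\gamma}f$ gives exactly \eqref{eqn:general_extension_defn}. As in Corollary~\ref{cor:12_inequality}, this also yields an energy identity expressing the energy of $P_{2\gamma}$ through that of the weighted GJMS operator $L_{2k+2,\phi_k}^{m_k}$, generalizing \eqref{eqn:intro_yang}.
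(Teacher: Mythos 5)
Your proof is correct and follows the paper's overall strategy: reduce the order-$(2k+2)$ boundary value problem to the scattering Poisson equation via conformal covariance and Theorem~\ref{thm:weighted_gjms_factorization}, then read off $P_{2\gamma}f$ from the $r^{2\gamma}$-coefficient of $U$. The one place you deviate is in the final constant computation. The paper passes through Proposition~\ref{prop:induction}, whose proof uses Lemma~\ref{lem:use_factorization} (the factorization $L_{2k,\phi_k}^{m_k}=\prod_j L_{2,\phi}^{m_k-2k+4j-2}$, with varying weights) and Lemma~\ref{lem:sl2} (the commutation $L_{2,\phi}^{m}\circ(\rho^{-1}\partial_\rho)\cong\rho^{-1}\partial_\rho\circ L_{2,\phi}^{m-2}$), to obtain $L_{2k,\phi_k}^{m_k}U\cong(-2)^kk!(\rho^{-1}\partial_\rho)^kU$. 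You instead compute the indicial action of $(-\Delta_{\phi_k})^k$ on $r^{2\gamma}$ directly, with the \emph{same} weight $m_k$ in each factor. This lands on the same constant because the indicial factors $\{2\gamma-2i-1+m_k\}_{i=0}^{k-1}=\{2k,2k-2,\dotsc,2\}$ and the ones $\{2,4,\dotsc,2k\}$ produced by the varying-weight factorization are the same set. Your route is a little shorter and avoids Lemmas~\ref{lem:use_factorization} and~\ref{lem:sl2}, but it leans on the implicit claim that the lower-order part $L_{2k,\phi_k}^{m_k}-(-\Delta_{\phi_k})^k$ sends $r^{2\gamma}$ to $O(r^{2\gamma-2k+2})$ — true because the weighted GJMS operator is $(-\Delta_{\phi_k})^k$ plus terms of order $\leq 2k-2$ with smooth (even, bounded near $r=0$) coefficients that carry at most a single $\rho^{-1}$ from $\delta_\phi$ per differentiation — and you assert rather than verify this; the paper's detour through the factorization is precisely what makes this transparent. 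Everything else (the computation $\Gamma(\gamma-k+1)=(\gamma-k)\Gamma(\gamma-k)$, the bookkeeping of the boundary conditions, and the handling of uniqueness, which you correctly flag as the delicate point and which the paper also dispatches by citing uniqueness for \eqref{eqn:general_case}) matches the paper.
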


\begin{remark}
Note that we have stated Theorem~\ref{thm:general_case} in terms of the geodesic defining function only.  This is so that we can use the expansion~\eqref{eqn:F_expansion} to more easily formulate the boundary conditions in~\eqref{eqn:general_case} necessary to conclude that solutions of~\eqref{eqn:general_case} are solutions to the Poisson equation~\eqref{eqn:poisson_equation}.  If one takes instead a defining function of the form
\[ \rho = r\left(1 + \rho_{(2)}r^2 + \rho_{(4)}r^4 + \dotso + \rho_{(2k)}r^{2k} + \Phi r^{2\gamma} + o(r^{2\gamma})\right), \]
such as arises when considering smooth defining functions (in which case $\Phi=0$) or the analogues of the special defining function $y$ constructed in Section~\ref{sec:adapted}, one must replace the boundary conditions in~\eqref{eqn:general_case} involving the functions $f_{(2\ell)}$ with terms also taking into account the terms $\rho_{(2\ell)}$ and one must also include in~\eqref{eqn:general_extension_defn} additional terms involving $\Phi$ (cf.\ Theorem~\ref{thm:01_case} and Theorem~\ref{thm:12_case}).
\end{remark}

\begin{proof}

By the conformal invariance of the weighted Paneitz operator,
\[ L_{2k+2,\phi_k}^{m_k}U = r^{-\frac{m_k+n+2k+3}{2}}\left(L_{2k+2,\phi_k}^{m_k}\right)_+\left(r^{\frac{m_k+n-2k-1}{2}}U\right) \]
for $\left(L_{2k+2,\phi_k}^{m_k}\right)_+$ the weighted Paneitz operator of $(X^{n+1},g_+,1^{m_k}\dvol,m_k-1)$.  By Theorem~\ref{thm:weighted_gjms_factorization} and the choice of $m_k$,
\begin{multline*}
\left(L_{2k+2,\phi}^{m_k}\right)_+ = \left[\prod_{j=2}^{k+1} \left(-\Delta_{g_+} - \frac{(n-4j+4+2\gamma)(n+4j-4-2\gamma)}{4}\right)\right]\\\circ\left[-\Delta_{g_+} - \frac{(n-2\gamma)(n+2\gamma)}{4}\right] .
\end{multline*}
In particular, if $-\Delta_{g_+}u-s(n-s)u=0$ for $s=\frac{n}{2}+\gamma$, then $\tilde U:=r^{s-n}u$ satisfies $L_{2k+2,\phi_k}^{m_k}\tilde U=0$.  Moreover, from~\cite{GrahamZworski2003,MazzeoMelrose1987}, we know that if $\tilde U$ is chosen so that $U\rv_M=f$, then
\begin{equation}
\label{eqn:general_expansion}
\tilde U = f + f_{(2)}r^2 + \dotsb + f_{(2k)}r^{2k} + d_\gamma^{-1}r^{2\gamma}P_{2\gamma}f + O(r^{2k+2}) .
\end{equation}
Thus $\tilde U$ satisfies~\eqref{eqn:general_case}, and hence, by uniqueness of solutions to~\eqref{eqn:general_case}, $\tilde U=U$.

Now, from~\eqref{eqn:general_expansion} (cf.\ \cite{ChangGonzalez2011}) it follows that
\[ P_{2\gamma}f = 2^{-k-1}\frac{d_\gamma\Gamma(\gamma-k)}{\Gamma(\gamma+1)}\lim_{r\to0}r^{m_k}\frac{\partial}{\partial r}\left(r^{-1}\frac{\partial}{\partial r}\right)^kU . \]
That this is equivalent to the desired equation~\eqref{eqn:general_extension_defn} follows from Proposition~\ref{prop:induction} below.
\end{proof}

\begin{prop}
\label{prop:induction}
Let $(X^{n+1},M^n,g_+)$ be a Poincar\'e--Einstein manifold.  Fix a defining function $\rho$ such that
\begin{equation}
\label{eqn:defining_function_induction}
\rho = r + \rho_{(2)}r^3 + \rho_{(4)}r^5 + \dotsb
\end{equation}
for $r$ the geodesic defining function associated to $\rho^2g_+\rv_{TM}$; in other words, suppose the Taylor series expansion of $\rho/r$ near $M$ is even in $r$.  Let $\gamma\in\left(0,\frac{n}{2}\right]\setminus\bN$ and set $m_0=1-2\gamma$.  Suppose that $L_{2,\phi_0}^{m_0}U=0$.  Then for any $k\in\bN$ it holds that
\begin{equation}
\label{eqn:induction}
L_{2k,\phi_k}^{m_k}U \cong (-2)^k k!\left(\rho^{-1}\frac{\partial}{\partial\rho}\right)^kU,
\end{equation}
where $L_{2k,\phi_k}^{m_k}$ is the weighted GJMS operator of order $2k$ of the smooth metric measure space
\[ \left( X^{n+1}, \rho^2g_+, \rho^{m_k}\dvol, m_k-1 \right), \qquad m_k:=m+2k \]
and~\eqref{eqn:induction} means that, in terms of the power series expansions in $\rho$ near the boundary $M$, the coefficients of $\rho^{2\gamma-2k}$ of both sides agree.
\end{prop}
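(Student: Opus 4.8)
The plan is to combine the conformal covariance of the weighted GJMS operators with the factorization in Theorem~\ref{thm:weighted_gjms_factorization}, reducing the whole statement to a one-variable indicial computation near $M$. Throughout set $s=\frac n2+\gamma$ and $m_k=m_0+2k=2k+1-2\gamma$. First, applying~\eqref{eqn:weighted_gjms_covariant} in order two to pass from $(X^{n+1},g_+,1^{m_0}\dvol,m_0-1)$ to $(X^{n+1},\rho^2g_+,\rho^{m_0}\dvol,m_0-1)$, together with the $k=1$ case of Theorem~\ref{thm:weighted_gjms_factorization} (which, since $m_0=1-2\gamma$, gives $\left(L_{2,\phi_0}^{m_0}\right)_+=-\Delta_{g_+}-s(n-s)$; in the critical case $2\gamma=n$ one uses the conventions of Section~\ref{sec:smms}), the hypothesis $L_{2,\phi_0}^{m_0}U=0$ is equivalent to $\left(-\Delta_{g_+}-s(n-s)\right)u=0$ for $u:=\rho^{n-s}U$. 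By the regularity theory recalled in Section~\ref{sec:scattering} around~\eqref{eqn:poisson_asymptotics}, and using~\eqref{eqn:defining_function_induction} together with the standing assumption that $h_r$ is even, $u$ has an expansion $u=\rho^{n-s}\tilde F+\rho^s\tilde H$ with $\tilde F,\tilde H\in C^\infty(\oX)$ having even Taylor expansions, $\tilde F\rv_M=f:=U\rv_M$, and $\tilde H\rv_M=d_\gamma^{-1}P_{2\gamma}f=:c$ by~\eqref{eqn:scattering_definition}; equivalently $U=\tilde F+\rho^{2\gamma}\tilde H$.

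Next I discard the regular part of $U$. Since $\gamma\notin\bN$, the exponent $2\gamma-2k$ is not an even integer. The operator $\left(\rho^{-1}\partial_\rho\right)^k$ sends the even function $\tilde F$ to a series in even powers of $\rho$; and writing $L_{2k,\phi_k}^{m_k}=\rho^{-(n-s+2k+1)}\left(L_{2k,\phi_k}^{m_k}\right)_+\rho^{\,n-s+1}$ via~\eqref{eqn:weighted_gjms_covariant}, with $\left(L_{2k,\phi_k}^{m_k}\right)_+$ a constant-coefficient polynomial in $\Delta_{g_+}$ and $\Delta_{g_+}$ even in $r$ in the geodesic gauge, one checks that $L_{2k,\phi_k}^{m_k}\tilde F$ is again a series in even powers of $\rho$. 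Hence neither contributes to the coefficient of $\rho^{2\gamma-2k}$, and it suffices to compare that coefficient in $L_{2k,\phi_k}^{m_k}\left(\rho^{2\gamma}\tilde H\right)$ and in $(-2)^kk!\left(\rho^{-1}\partial_\rho\right)^k\left(\rho^{2\gamma}\tilde H\right)$. The latter is immediate: $\left(\rho^{-1}\partial_\rho\right)^k\rho^{2\gamma}=2^k\gamma(\gamma-1)\cdots(\gamma-k+1)\rho^{2\gamma-2k}$ while the higher-order terms of $\rho^{2\gamma}\tilde H$ give higher powers of $\rho$, so the coefficient equals $(-1)^k2^{2k}k!\,\gamma(\gamma-1)\cdots(\gamma-k+1)\,c$.

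For the left-hand side, apply~\eqref{eqn:weighted_gjms_covariant} in order $2k$ and Theorem~\ref{thm:weighted_gjms_factorization} with $m_k=2k+1-2\gamma$ to obtain $\left(L_{2k,\phi_k}^{m_k}\right)_+=\prod_{j=1}^k\left(-\Delta_{g_+}-\lambda_j(n-\lambda_j)\right)$ with $\lambda_j=s+1-2j$; the conformal weights combine so that $L_{2k,\phi_k}^{m_k}\left(\rho^{2\gamma}\tilde H\right)=\rho^{-(n-s+2k+1)}\left(L_{2k,\phi_k}^{m_k}\right)_+\left(\rho^{s+1}\tilde H\right)$. The key is the elementary indicial identity $\left(-\Delta_{g_+}-\mu\right)\left(\rho^aw\right)=\left(a(n-a)-\mu\right)\rho^aw+\rho^{a+1}(\cdots)$, valid for $w\in C^\infty(\oX)$ once one writes $g_+=r^{-2}(dr^2+h_r)$: each factor $-\Delta_{g_+}-\lambda_j(n-\lambda_j)$ then maps $\rho^{s+1}C^\infty(\oX)$ into itself and acts on the leading coefficient by multiplication by $(s+1)(n-s-1)-\lambda_j(n-\lambda_j)$, with no resonance since $\lambda_j\notin\{s+1,\,n-s-1\}$ for $j\ge1$ (this uses $0<\gamma\notin\bN$). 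Iterating over $j$ and using $a(n-a)-b(n-b)=(a-b)(n-a-b)$ gives $(s+1)(n-s-1)-\lambda_j(n-\lambda_j)=2j(n-2s-2+2j)=4j(j-1-\gamma)$, so the coefficient of $\rho^{s+1}$ of $\left(L_{2k,\phi_k}^{m_k}\right)_+\left(\rho^{s+1}\tilde H\right)$ at $M$ is $c\prod_{j=1}^k4j(j-1-\gamma)=(-1)^k2^{2k}k!\,\gamma(\gamma-1)\cdots(\gamma-k+1)\,c$. Multiplying by $\rho^{-(n-s+2k+1)}$ moves this coefficient from $\rho^{s+1}$ to $\rho^{2\gamma-2k}$, so it agrees with the one found above and~\eqref{eqn:induction} follows.

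The main obstacle is the asymptotic bookkeeping in the last step: one must verify that the coefficient of the non-integer power $\rho^{2\gamma-2k}$ on each side really is governed solely by $\tilde H\rv_M$, which means tracking how the constant-coefficient operator $\prod_{j=1}^k\left(-\Delta_{g_+}-\lambda_j(n-\lambda_j)\right)$ acts on the two asymptotic families $\rho^{n-s}C^\infty(\oX)$ and $\rho^sC^\infty(\oX)$ present in $u$ (in particular that the resonance at $\lambda_1=s-1$ within the $\rho^{n-s}$ family merely annihilates a leading term and never produces a $\rho^s$-family term), and checking that no factor resonates with the exponent $s+1$, which fails only when $\gamma\in\bN$. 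One must also confirm that Theorem~\ref{thm:weighted_gjms_factorization} remains applicable at the endpoint $m_0=1-n$ occurring when $2\gamma=n$, which is exactly where the conventions of Section~\ref{sec:smms} enter. Peeling off $\tilde F$ at the outset, so that the regular part can be dropped wholesale by a parity argument, is precisely what reduces the problem to the single clean computation above.
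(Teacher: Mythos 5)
Your proof is correct, but it takes a genuinely different route from the paper's. The paper argues by induction on $k$: it first establishes two operator-level commutation identities in Lemma~\ref{lem:sl2}, namely $L_{2,\phi}^m - L_{2,\phi}^{m-\ell} \cong -\ell\,\rho^{-1}\partial_\rho$ and $L_{2,\phi}^m\circ(\rho^{-1}\partial_\rho)\cong(\rho^{-1}\partial_\rho)\circ L_{2,\phi}^{m-2}$ (where $\cong$ there means agreement of leading-order terms as operators, a property the preamble to that lemma asserts is enough to imply~\eqref{eqn:induction}), and then runs the inductive step through the splitting $L_{2k+2,\phi_{k+1}}^{m_{k+1}}=L_{2,\phi}^{m+4k+2}\circ L_{2k,\phi_k}^{m_k}$ of Lemma~\ref{lem:use_factorization}, with the hypothesis $L_{2,\phi_0}^{m_0}U=0$ invoked only at the last step via the first commutation identity. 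You instead push everything back to the Poincar\'e side using the full factorization $(L_{2k,\phi_k}^{m_k})_+=\prod_{j=1}^k\left(-\Delta_{g_+}-\lambda_j(n-\lambda_j)\right)$ of Theorem~\ref{thm:weighted_gjms_factorization} and the conformal covariance~\eqref{eqn:weighted_gjms_covariant}, split $U=\tilde F+\rho^{2\gamma}\tilde H$ via the Poisson asymptotics, kill the $\tilde F$-contribution on both sides of~\eqref{eqn:induction} by parity (here the evenness of $\rho/r$ from~\eqref{eqn:defining_function_induction} is used exactly as the paper uses it in Lemma~\ref{lem:sl2}), and match the coefficient of $\rho^{2\gamma-2k}$ coming from $\rho^{2\gamma}\tilde H$ by a direct indicial computation, which makes the role of $\gamma\notin\bN$ (no $\lambda_j$ resonates with the exponent $s+1$) completely explicit. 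Both approaches rest on Theorem~\ref{thm:weighted_gjms_factorization}; the paper's inductive route is shorter once Lemma~\ref{lem:sl2} is available and displays the $\ksl_2$-type algebraic structure of the commutation relations, while yours bypasses the passage from operator-level to coefficient-level $\cong$ and more transparently isolates the two hypotheses (parity of $\rho/r$ and $\gamma\notin\bN$). One small imprecision: in the geodesic gauge with $h_r$ and $\rho/r$ even, the error term in your indicial identity is $\rho^{a+2}(\cdots)$ rather than $\rho^{a+1}(\cdots)$; this does not affect the argument since you only use the leading-order coefficient.
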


The proof of Proposition~\ref{prop:induction} depends on two simple observations relating the weighted GJMS operators of the same order of the smooth metric measure spaces $(X^{n+1},g,\rho^\ell\dvol,\ell-1)$ and $(X^{n+1},g,\rho^m\dvol,m-1)$.

\begin{lem}
\label{lem:use_factorization}
Let $(X^{n+1},M^n,g_+)$ be a Poincar\'e--Einstein manifold and fix a smooth defining function $\rho$.  For each $m\in(1-n,\infty)$, consider the smooth metric measure space $(X^{n+1},g:=\rho^2g_+,\rho^m\dvol,m-1)$ with its associated weighted GJMS operators $L_{2k,\phi}^m$.  Then
\begin{equation}
\label{eqn:use_factorization}
L_{2k,\phi}^m = \prod_{j=1}^k L_{2,\phi}^{m-2k+4j-2} .
\end{equation}
In particular,
\begin{equation}
\label{eqn:use_factorization_1k-1}
L_{2k,\phi}^m = L_{2,\phi}^{m+2k-2}\circ L_{2k-2,\phi}^{m-2} .
\end{equation}
\end{lem}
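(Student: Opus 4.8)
The plan is to conjugate the whole identity back to the ``trivial'' smooth metric measure space $(X^{n+1},g_+,1^{m'}\dvol,m'-1)$, on which Theorem~\ref{thm:weighted_gjms_factorization} turns every weighted GJMS operator into a polynomial in $-\Delta_{g_+}$, so that the factorization becomes a matching of roots; the only remaining work is bookkeeping the conformal weights when transporting back to $g=\rho^2g_+$.

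Concretely, I would first record, exactly as in the proofs of Theorem~\ref{thm:01_case} and Theorem~\ref{thm:12_case}, that~\eqref{eqn:weighted_gjms_covariant} applied with $u=\rho^{-1}$ gives, for every $\ell\in\bN$ and every admissible dimensional parameter $m'$,
\[ L_{2\ell,\phi}^{m'} = \rho^{-\frac{m'+n+2\ell+1}{2}}\circ\left(L_{2\ell,\phi}^{m'}\right)_+\circ\rho^{\frac{m'+n-2\ell+1}{2}}, \]
where $\left(L_{2\ell,\phi}^{m'}\right)_+$ is the order-$2\ell$ weighted GJMS operator of $(X^{n+1},g_+,1^{m'}\dvol,m'-1)$ and the outer factors are multiplication operators. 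Next, set $m_j':=m-2k+4j-2$ for $j=1,\dots,k$. By Theorem~\ref{thm:weighted_gjms_factorization} applied with $k=1$, $\left(L_{2,\phi}^{m_j'}\right)_+=-\Delta_{g_+}-\tfrac14(n-m_j'+1)(n+m_j'-1)$, and since $n-m_j'+1=n-m+2k-4j+3$ and $n+m_j'-1=m+n-2k+4j-3$, this is precisely the $j$-th factor appearing in the product formula~\eqref{eqn:weighted_gjms_factorization} for $\left(L_{2k,\phi}^m\right)_+$. As these factors are polynomials in $-\Delta_{g_+}$ they commute, so $\left(L_{2k,\phi}^m\right)_+=\left(L_{2,\phi}^{m_k'}\right)_+\circ\dots\circ\left(L_{2,\phi}^{m_1'}\right)_+$.

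Finally I would transport this back. Writing $a_j:=\tfrac12(m_j'+n-1)$, the displayed covariance relation with $\ell=1$ reads $L_{2,\phi}^{m_j'}=\rho^{-(a_j+2)}\circ\left(L_{2,\phi}^{m_j'}\right)_+\circ\rho^{a_j}$, and since $m_{j+1}'-m_j'=4$ one has $a_{j+1}=a_j+2$, so in the composition $L_{2,\phi}^{m_{j+1}'}\circ L_{2,\phi}^{m_j'}$ the multiplication operators sandwiched between the two interior operators cancel. Iterating, the composition $L_{2,\phi}^{m_k'}\circ\dots\circ L_{2,\phi}^{m_1'}$ collapses to $\rho^{-(a_k+2)}\circ\left(\left(L_{2,\phi}^{m_k'}\right)_+\circ\dots\circ\left(L_{2,\phi}^{m_1'}\right)_+\right)\circ\rho^{a_1}$; since $a_1=\tfrac12(m+n-2k+1)$ and $a_k+2=\tfrac12(m+n+2k+1)$, comparing with the $\ell=k$ case of the covariance relation and using the flat factorization identifies this with $L_{2k,\phi}^m$, giving~\eqref{eqn:use_factorization}. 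Identity~\eqref{eqn:use_factorization_1k-1} then follows by splitting off the outermost factor $L_{2,\phi}^{m_k'}=L_{2,\phi}^{m+2k-2}$ and applying~\eqref{eqn:use_factorization} to the parameters $(m-2,k-1)$ to recognize the remaining composition as $L_{2k-2,\phi}^{m-2}$. The one point requiring care is that the curved second-order operators $L_{2,\phi}^{m_j'}$ do \emph{not} commute for different $m_j'$ --- only their flat counterparts do --- so the product in~\eqref{eqn:use_factorization} has to be read as the composition ordered by decreasing dimensional parameter, consistently with~\eqref{eqn:use_factorization_1k-1}, and the telescoping of the conformal weights tracked accordingly; apart from this organizational issue, the statement is an immediate consequence of Theorem~\ref{thm:weighted_gjms_factorization} and~\eqref{eqn:weighted_gjms_covariant}.
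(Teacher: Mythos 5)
Your argument is correct and spells out precisely the steps the paper compresses into the phrase ``immediate consequence of Theorem~\ref{thm:weighted_gjms_factorization}'': conjugate each second-order factor to the corresponding polynomial $-\Delta_{g_+}-\tfrac14(n-m_j'+1)(n+m_j'-1)$ via~\eqref{eqn:weighted_gjms_covariant}, match those against the factors in~\eqref{eqn:weighted_gjms_factorization}, and verify that the intermediate powers of $\rho$ telescope because $m_{j+1}'-m_j'=4$. Your closing remark about the ordering is a genuine nuance the paper leaves implicit: the curved factors $L_{2,\phi}^{m_j'}$ are conjugates of commuting flat operators by \emph{different} powers of $\rho$ and hence do not themselves commute, so the product in~\eqref{eqn:use_factorization} must be read with the larger dimensional parameter on the left, exactly as encoded by~\eqref{eqn:use_factorization_1k-1} and used in the proof of Proposition~\ref{prop:induction}.
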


\begin{proof}

This is an immediate consequence of Theorem~\ref{thm:weighted_gjms_factorization}.
\end{proof}

In the next lemma we establish the equivalence between the highest order terms of operators on $X$, where we measure ``order'' in terms of Taylor series expansions in a defining function at the boundary of a Poincar\'e--Einstein manifold; for example, both the weighted conformal Laplacian $L_{2,\phi}^m$ and the operator $\rho^{-1}\partial_\rho$ are second-order in this sense.  Given two differential operators $A_k$ and $B_k$ of order $2k$ in this sense, we will write $A_k\cong B_k$ if the leading order terms of $A_k$ and $B_k$ agree; this implies that~\eqref{eqn:induction} holds.

\begin{lem}
\label{lem:sl2}
Let $(X^{n+1},M^n,g_+)$ and $(X^{n+1},g,\rho^m\dvol,m-1)$ be as in Lemma~\ref{lem:use_factorization}.  Suppose additionally that $\rho$ is of the form~\eqref{eqn:defining_function_induction}.  Given any $\ell$ such that $m-\ell>1-n$, it holds that
\begin{align}
\label{eqn:sl2_id} L_{2,\phi}^m - L_{2,\phi}^{m-\ell} & \cong -\ell\rho^{-1}\partial_\rho \\
\label{eqn:sl2_commute} L_{2,\phi}^m\circ\left(\rho^{-1}\partial_\rho\right) & \cong \rho^{-1}\partial_\rho \circ L_{2,\phi}^{m-2} .
\end{align}
\end{lem}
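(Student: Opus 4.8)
The plan is to reduce both equivalences to an identity for a one-variable model operator near $M$. Setting $v=\rho$, so that the weight $\phi_m$ enters only through the coefficient $m$, formula~\eqref{eqn:weighted_gjms12} reads
\[ L_{2,\phi}^m = -\Delta_\phi + \frac{m+n-1}{2}J_\phi^m = -\Delta_g - m\rho^{-1}\langle\nabla\rho,\nabla\,\cdot\,\rangle_g + \frac{m+n-1}{2}J_\phi^m . \]
The first point I would record is that the zeroth order term is bounded up to $M$: by Lemma~\ref{lem:pe_smms_formulae} and~\eqref{eqn:J_to_grad} this reduces to $\lv\nabla\rho\rv_g^2 - 1 = O(\rho^2)$, which for $\rho$ of the form~\eqref{eqn:defining_function_induction} follows by writing $\rho = e^w r$ with $w = \rho_{(2)}r^2 + \dotsb$ and computing $\lv\nabla\rho\rv_g^2 = \lv dr + r\,dw\rv_{\bar g}^2 = 1 + 2r\,\partial_r w + O(r^4)$, where $\bar g := r^2 g_+ = dr^2 + h_r$. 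Hence any zeroth order term, together with its $\rho$-derivatives, is of strictly lower Taylor order than $\rho^{-1}\partial_\rho$ and plays no role in the equivalences $\cong$.

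For~\eqref{eqn:sl2_id} I would then use $\Delta_{\phi_m} - \Delta_{\phi_{m-\ell}} = \ell\rho^{-1}\langle\nabla\rho,\nabla\,\cdot\,\rangle_g$, reducing the claim to $\rho^{-1}\langle\nabla\rho,\nabla\,\cdot\,\rangle_g \cong \rho^{-1}\partial_\rho$. In coordinates $(\rho,x)$ adapted to $\rho$ one has $\langle\nabla\rho,\nabla u\rangle_g = g^{\rho\rho}\partial_\rho u + g^{\rho i}\partial_i u$ with $g^{\rho\rho} = \lv\nabla\rho\rv_g^2 = 1 + O(\rho^2)$ and $g^{\rho i} = O(\rho^3)$ --- the latter because $\bar g$ has no cross terms in geodesic coordinates while $\partial\rho/\partial x^i = O(\rho^3)$ --- so that $\rho^{-1}\langle\nabla\rho,\nabla u\rangle_g = \rho^{-1}\partial_\rho u + O(\rho)\partial_\rho u + O(\rho^2)\partial_i u$, which gives the claim.

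For~\eqref{eqn:sl2_commute}, both compositions are of Taylor order $4$, and by the preceding remarks their leading parts are $-(\partial_\rho^2 + m\rho^{-1}\partial_\rho)\circ(\rho^{-1}\partial_\rho)$ and $-(\rho^{-1}\partial_\rho)\circ(\partial_\rho^2 + (m-2)\rho^{-1}\partial_\rho)$ respectively; here I use the coordinate expansion $\Delta_g = \partial_\rho^2 + O(\rho)\partial_\rho + \Delta_{h_\rho} + \dotsb$, obtained exactly as above, together with the formula for $\Delta_\phi$. It then remains to verify the elementary one-variable operator identity
\[ (\partial_t^2 + m t^{-1}\partial_t)(t^{-1}\partial_t) = (t^{-1}\partial_t)(\partial_t^2 + (m-2)t^{-1}\partial_t), \]
which follows by applying both sides to $t^a$: each gives $a(a-2)(a+m-3)t^{a-4}$. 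This is precisely the $\mathfrak{sl}_2$-type commutation relation suggested by the name of the lemma.

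The principal obstacle is bookkeeping rather than conceptual: one must work in coordinates adapted to the non-geodesic defining function $\rho$ and verify that the cross terms $g^{\rho i}$, the subleading part of $g^{\rho\rho}$, the tangential operator $\Delta_{h_\rho}$, and the zeroth order curvature terms all raise the Taylor order and hence do not contribute to the relations $\cong$. Once this is checked, the content of the lemma is the displayed one-variable identity.
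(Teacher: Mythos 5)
Your proof is correct and takes essentially the same approach as the paper's: establish that $L_{2,\phi}^m$ is equivalent, in the leading-order sense of $\cong$, to the one-variable operator $-(\partial_\rho^2 + m\rho^{-1}\partial_\rho)$ because the zeroth-order curvature terms and the off-diagonal metric coefficients are of strictly lower Taylor order for $\rho$ of the form~\eqref{eqn:defining_function_induction}, and then reduce both identities to the elementary $\mathfrak{sl}_2$-type commutation relation for the model operator. The paper states the needed boundedness/parity of $R$, $\rho^{-1}\Delta\rho$, and $\rho^{-2}(\lv\nabla\rho\rv^2-1)$ without spelling out the coordinate bookkeeping and then performs the commutation for~\eqref{eqn:sl2_commute} directly in three lines; you have merely made these estimates explicit.
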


\begin{proof}

Since $\rho$ is of the form~\eqref{eqn:defining_function_induction}, we readily check that the asymptotic expansions of $R$, $\rho^{-1}\Delta\rho$, and $\rho^{-2}\left(\lv\nabla\rho\rv^2-1\right)$ near $M$ all contain only even powers of $r$.  From the definition of the weighted conformal Laplacian, it then follows that~\eqref{eqn:sl2_id} holds.  Using~\eqref{eqn:defining_function_induction} again, we compute that
\begin{align*}
L_{2,\phi}^m\left(\rho^{-1}\partial_\rho(U)\right) & \cong -\left(\Delta + m\rho^{-1}\partial_\rho\right)\left(\rho^{-1}\partial_\rho U\right) \\
& \cong -\rho^{-1}\partial_\rho\left(\Delta U\right) - (m-2)\left(\rho^{-1}\partial_\rho\right)^2U \\
& \cong \rho^{-1}\partial_\rho\left(L_{2,\phi}^{m-2}U\right) . \qedhere
\end{align*}
\end{proof}

\begin{proof}[Proof of Proposition~\ref{prop:induction}]

The proof is by induction.  From~\eqref{eqn:sl2_id} we see that~\eqref{eqn:induction} holds when $k=1$.  Suppose now that it holds for a given value of $k$.  By~\eqref{eqn:use_factorization_1k-1} we may write
\[ L_{2k+2,\phi_{k+1}}^{m_{k+1}}U = L_{2,\phi}^{m+4k+2}\left(L_{2k,\phi_k}^{m_k}U\right) . \]
By applying the inductive hypothesis and then~\eqref{eqn:sl2_commute} to the above display we find that
\begin{align*}
L_{2k+2,\phi_{k+1}}^{m_{k+1}}U & \cong (-2)^kk! L_{2,\phi}^{m+4k+2}\left((\rho^{-1}\partial_\rho)^kU\right) \\
& \cong (-2)^kk!\left(\rho^{-1}\partial_\rho\right)^k L_{2,\phi}^{m+2k+2}U .
\end{align*}
From~\eqref{eqn:sl2_id} and the assumption $L_{2,\phi_0}^{m_0}U=0$ it thus follows that
\[ L_{2k+2,\phi_{k+1}}^{m_{k+1}}U \cong (-2)^{k+1}(k+1)!\left(\rho^{-1}\partial_\rho\right)^{k+1}U, \]
as desired.
\end{proof}
\section{A renormalized energy identity}
\label{sec:energy}

One can attribute the failure of Corollary~\ref{cor:01_inequality} when $\gamma>1$ to the fact that the integral on the right hand side of~\eqref{eqn:01_energy_inequality} is infinite in this case while the energy on the left hand side is finite.  Nevertheless, R.\ Yang~\cite{Yang2013} observed that both sides of~\eqref{eqn:01_energy_inequality} are still related after renormalizing the divergent integral.  In the simplest case, namely $\gamma=3/2$, this was written down explicitly in~\cite[Section~2.2]{Yang2013} as follows.

\begin{thm}
\label{thm:yang_renormalized_energy}
Let $f\in H^{3/2}(\bR^n)$ and let $U\in W^{2,2}(\bR_+^{n+1})$ be the solution of the extension problem~\eqref{eqn:yang_case} with $\gamma=3/2$.  Then
\begin{equation}
\label{eqn:yang_renormalized_energy}
\frac{1}{2}\int_{\bR_+^{n+1}} \left(\Delta U\right)^2 = \lim_{\varepsilon\to0} \left(\frac{1}{\varepsilon}\int_{\bR^n}\lv\nabla f\rv^2 - \int_{\{y>\varepsilon\}}\lv\nabla U\rv^2\,y^{-2}\right) .
\end{equation}
\end{thm}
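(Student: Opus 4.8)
The plan is to derive~\eqref{eqn:yang_renormalized_energy} from a double integration by parts on the truncated half-space $\{y>\varepsilon\}$, using the structural fact that, when $\gamma=3/2$, the biharmonic extension $U$ of Theorem~\ref{thm:yang} also solves a \emph{second}-order degenerate equation. Since $\gamma=3/2$ forces $k=1$ and $m_k=m_1=0$, the extension problem~\eqref{eqn:yang_case} is exactly~\eqref{eqn:12_case} for the flat Poincar\'e--Einstein manifold $\left(\bR_+^{n+1},\bR^n,y^{-2}(dx^2\oplus dy^2)\right)$ with $\rho=y$; as shown inside the proof of Theorem~\ref{thm:12_case} (via the factorization of Theorem~\ref{thm:weighted_gjms_factorization}), this $U$ then also satisfies $L_{2,\phi_0}^{m_0}U=0$ with $m_0=1-2\gamma=-2$, and Lemma~\ref{lem:pe_smms_formulae} gives $J_{\phi_0}^{m_0}=0$ for this flat model, so that
\[ \divsymb\left(y^{-2}\nabla U\right)=0, \qquad\text{equivalently}\qquad \Delta U = 2y^{-1}\partial_y U \qquad\text{in $\bR_+^{n+1}$}. \]
I also record from~\eqref{eqn:poisson_asymptotics}--\eqref{eqn:F_expansion} that near $\bR^n$ one has $U=r^{s-n}u=F+Hr^{2\gamma}=f+O(y^2)$; in particular $\partial_y U\rv_{y=0}=0$, with enough regularity in $x$ to differentiate (a routine density argument lets one assume $f\in C_c^\infty$).

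First I would fix $\varepsilon>0$, write $\int_{\{y>\varepsilon\}}(\Delta U)^2 = 2\int_{\{y>\varepsilon\}}(\divsymb\nabla U)\,(y^{-1}\partial_y U)$ using the second-order equation on one factor, and integrate by parts. The boundary of $\{y>\varepsilon\}$ is $\{y=\varepsilon\}$ with outward normal $-\partial_y$, and the terms at $y=\infty$ vanish by the decay of $U$; using $\nabla U\cdot\nabla(y^{-1}\partial_y U)=\tfrac12 y^{-1}\partial_y\lv\nabla U\rv^2-y^{-2}(\partial_y U)^2$ this gives
\[ \int_{\{y>\varepsilon\}}(\Delta U)^2 = -\int_{\{y>\varepsilon\}} y^{-1}\partial_y\lv\nabla U\rv^2 + 2\int_{\{y>\varepsilon\}} y^{-2}(\partial_y U)^2 - \frac{2}{\varepsilon}\int_{\bR^n} (\partial_y U)^2\big\rv_{y=\varepsilon}. \]
A second integration by parts, now in $y$ alone, yields $-\int_{\{y>\varepsilon\}}y^{-1}\partial_y\lv\nabla U\rv^2 = \tfrac{1}{\varepsilon}\int_{\bR^n}\lv\nabla U\rv^2\big\rv_{y=\varepsilon} - \int_{\{y>\varepsilon\}}y^{-2}\lv\nabla U\rv^2$. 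Substituting the second-order equation once more in the form $2y^{-2}(\partial_y U)^2=\tfrac12(\Delta U)^2$, collecting terms, and using $\lv\nabla U\rv^2-2(\partial_y U)^2 = \lv\nabla_x U\rv^2-(\partial_y U)^2$, I arrive at
\[ \frac12\int_{\{y>\varepsilon\}}(\Delta U)^2 = \frac{1}{\varepsilon}\int_{\bR^n}\left(\lv\nabla_x U\rv^2 - (\partial_y U)^2\right)\big\rv_{y=\varepsilon} - \int_{\{y>\varepsilon\}}\lv\nabla U\rv^2 y^{-2}. \]

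Finally I would let $\varepsilon\to0$. The left side tends to $\tfrac12\int_{\bR_+^{n+1}}(\Delta U)^2$, which is finite since $U\in W^{2,2}$. On the right, the expansion $U=f+O(y^2)$ gives $\partial_y U(\cdot,\varepsilon)=O(\varepsilon)$ and $\nabla_x U(\cdot,\varepsilon)=\nabla f+O(\varepsilon^2)$ in $L^2(\bR^n)$, whence $\varepsilon^{-1}\int_{\bR^n}(\partial_y U)^2\rv_{y=\varepsilon}=O(\varepsilon)\to0$ and $\varepsilon^{-1}\int_{\bR^n}\lv\nabla_x U\rv^2\rv_{y=\varepsilon}=\varepsilon^{-1}\int_{\bR^n}\lv\nabla f\rv^2+O(\varepsilon)$. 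This produces precisely $\lim_{\varepsilon\to0}\left(\tfrac1\varepsilon\int_{\bR^n}\lv\nabla f\rv^2 - \int_{\{y>\varepsilon\}}\lv\nabla U\rv^2 y^{-2}\right)$ on the right, which is~\eqref{eqn:yang_renormalized_energy}. (As a consistency check, Corollary~\ref{cor:12_inequality} specialized to this flat setting, where $\Phi=\rho_{(2)}=0$ and the curvature terms of $g=dx^2\oplus dy^2$ vanish, identifies the left side of~\eqref{eqn:yang_renormalized_energy} with $\int_{\bR^n} f\,P_3 f$.)

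The step I expect to be the main obstacle is the analytic justification of the two integrations by parts together with the limit $\varepsilon\to0$: one must verify that the boundary integrals at $y=\infty$ genuinely vanish --- most safely by first truncating to $\{\varepsilon<y<R\}$, keeping the boundary terms at $y=R$, and letting $R\to\infty$ --- and one must control the expansion $U=f+O(y^2)$ near $\bR^n$ in a norm strong enough that $\varepsilon^{-1}\bigl(\lV\nabla_x U(\cdot,\varepsilon)\rV_{L^2}^2-\lV\nabla f\rV_{L^2}^2\bigr)\to0$. Both follow from the mapping properties of the Poisson operator $\mP(s)$ and the asymptotics~\eqref{eqn:poisson_asymptotics}--\eqref{eqn:F_expansion}; this is exactly the simplification referred to earlier, since R.~Yang's original argument instead established the needed cancellations by an explicit computation in the flat model.
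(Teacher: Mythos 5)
Your derivation is correct (up to the analytic justifications you yourself flag, which are routine given the asymptotics~\eqref{eqn:poisson_asymptotics}--\eqref{eqn:F_expansion}), but it takes a genuinely different route from the one the paper uses. The paper does not prove Theorem~\ref{thm:yang_renormalized_energy} directly: it cites R.~Yang and establishes instead the curved generalization Theorem~\ref{thm:12_renormalized_energy}, whose flat specialization together with Corollary~\ref{cor:12_inequality} recovers~\eqref{eqn:yang_renormalized_energy}. In the proof of Theorem~\ref{thm:12_renormalized_energy} the object integrated by parts on $\{r>\varepsilon\}$ is the \emph{second-order} energy density $\lv\nabla U\rv^2\,r^{m_0}$ against $L_{2,\phi_0}^{m_0}U=0$ --- a single integration by parts --- and the boundary term at $r=\varepsilon$ is then expanded using~\eqref{eqn:12_expansion} and the identity~\eqref{eqn:f2} for $f_{(2)}$ to isolate the finite part, which is a multiple of $\int f\,P_{2\gamma}f$; Corollary~\ref{cor:12_inequality} then converts that into $\int(\Delta_{\phi_1}U)^2$. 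You instead work directly on $\int(\Delta U)^2$, substituting the pointwise relation $\Delta U=2y^{-1}\partial_y U$ into one factor and integrating by parts twice, which closes the identity without invoking the scattering operator, the $f_{(2)}$ formula, or Corollary~\ref{cor:12_inequality} at all. Your argument is more self-contained and closer in spirit to Yang's original computation in the flat model, whereas the paper's route exposes the geometric content (the renormalized quantity is a multiple of $\int f\,P_3 f$) and is the one that generalizes transparently to curved backgrounds and to general $\gamma$ via the $f_{(2\ell)}$. One small imprecision in your closing consistency check: the flat case of Corollary~\ref{cor:12_inequality} gives $\int(\Delta U)^2=\tfrac{6}{d_{3/2}}\int f\,P_3 f$, so the common value of both sides of~\eqref{eqn:yang_renormalized_energy} is $\tfrac{3}{d_{3/2}}\int f\,P_3 f$, not $\int f\,P_3 f$ itself.
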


From Theorem~\ref{thm:yang} it follows that the left hand side of~\eqref{eqn:yang_renormalized_energy} can be identified, up to a multiplicative constant, with the energy $\left((-\Delta)^{3/2}f,f\right)$.  On the other hand, the first integral on the right hand side of~\eqref{eqn:yang_renormalized_energy} is, up to a multiplicative constant, equal to $\int_{\bR^n}f\,f_{(2)}$ for $f_{(2)}$ defined in terms of $U$ via~\eqref{eqn:poisson_asymptotics} and~\eqref{eqn:F_expansion}.  Thus we may interpret Theorem~\ref{thm:yang_renormalized_energy} as stating that the energy of $(-\Delta)^{3/2}$ is the finite part of the energy of the scattering operator $-\Delta-\frac{n^2-(3/2)^2}{4}$, and the infinite part is determined by $f_{(2)}$ in the expansion~\eqref{eqn:F_expansion}.

The following theorem states that this interpretation persists in the curved setting when considering the weighted GJMS operators $P_{2\gamma}$ with $\gamma\in(1,2)$.  For simplicity we have opted to state our result using only geodesic defining functions; it is straightforward to use the asymptotics expansions given in the proof of Theorem~\ref{thm:12_case} to consider more general defining functions.  Also, we have opted to state our result in terms of the energy of $P_{2\gamma}$.  Using Corollary~\ref{cor:12_inequality}, one can instead write this result in terms of the energy of the weighted Paneitz operator in the interior, and thereby realize Theorem~\ref{thm:12_renormalized_energy} as the curved analogue of Theorem~\ref{thm:yang_renormalized_energy}.

\begin{thm}
\label{thm:12_renormalized_energy}
Let $(X^{n+1},M^n,g_+)$ be a Poincar\'e--Einstein manifold, fix a representative $h$ of the conformal boundary, and let $r$ be the geodesic defining function associated to $h$.  Let $\gamma\in(1,2)$, set $m_1=3-2\gamma$, and consider the smooth metric measure space~\eqref{eqn:12_case_smms} determined by $r$.  Given $f\in C^\infty(M)$, let $U$ be the solution of~\eqref{eqn:12_case}.  Then
\begin{equation}
\label{eqn:12_renormalized_energy}
\begin{split}
\frac{4\gamma(\gamma-1)}{d_\gamma}\int_M f\,P_{2\gamma}f & = \lim_{\varepsilon\to0} \bigg[ \varepsilon^{2-2\gamma}\int_M\left(\lv\nabla f\rv^2 + \frac{n-2\gamma}{2}J_hf^2\right)\dvol_h \\
& \qquad - 2(\gamma-1)\int_{r>\varepsilon}\left(\lv\nabla U\rv^2 + \frac{m_0+n-1}{2}J_{\phi_0}^{m_0}U^2\right)r^{m_0}\dvol_g \bigg] .
\end{split}
\end{equation}
where the second summand on the right hand side is computed in terms of the smooth metric measure space~\eqref{eqn:01_case_smms} determined by $r$.
\end{thm}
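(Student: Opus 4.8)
The plan is to reduce everything to the second-order equation satisfied by $U$. Specializing the proof of Theorem~\ref{thm:12_case} to the geodesic defining function $\rho = r$ (so that $\rho_{(2)} = \Phi = 0$ in~\eqref{eqn:12_case_rho}), the solution $U$ of the fourth-order problem~\eqref{eqn:12_case} also solves $L_{2,\phi_0}^{m_0}U = 0$ for the smooth metric measure space~\eqref{eqn:01_case_smms} determined by $r$, and has the boundary asymptotics
\[ U = f + f_{(2)}r^2 + d_\gamma^{-1}(P_{2\gamma}f)\,r^{2\gamma} + o(r^{2\gamma}) \]
coming from~\eqref{eqn:12_expansion}, with $f_{(2)}$ as in~\eqref{eqn:F_expansion}. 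Granting these two facts, the argument is to integrate the interior energy over $\{r > \varepsilon\}$, integrate by parts using $L_{2,\phi_0}^{m_0}U = 0$, expand the boundary contribution at $\{r = \varepsilon\}$ using the asymptotics of $U$, and send $\varepsilon \to 0$.

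First I would carry out the integration by parts. Since $g = r^2g_+ = dr^2 + h_r$ on the collar, the quadratic form of $L_{2,\phi_0}^{m_0}$ with respect to $r^{m_0}\dvol_g$ is $\int\bigl(|\nabla U|^2 + \tfrac{m_0+n-1}{2}J_{\phi_0}^{m_0}U^2\bigr)r^{m_0}\dvol_g$ (cf.\ \eqref{eqn:weighted_gjms12} and Corollary~\ref{cor:01_inequality}), and the outward unit normal of $\{r > \varepsilon\}$ along $\{r = \varepsilon\}$ is $-\partial_r$. The equation $L_{2,\phi_0}^{m_0}U = 0$ and the divergence theorem for the measure $r^{m_0}\dvol_g$ then give
\[ \int_{r>\varepsilon}\Bigl(|\nabla U|^2 + \tfrac{m_0+n-1}{2}J_{\phi_0}^{m_0}U^2\Bigr)r^{m_0}\dvol_g = -\varepsilon^{m_0}\int_{r=\varepsilon}U\,\frac{\partial U}{\partial r}\,\dvol_{h_\varepsilon}, \]
where $h_\varepsilon := h_r|_{r=\varepsilon}$. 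Using the asymptotics of $U$, the even expansion $h_r = h + h_{(2)}r^2 + O(r^4)$ (so $\dvol_{h_\varepsilon} = (1 + O(\varepsilon^2))\dvol_h$), and the fact that $\gamma < 2$ ensures $r^3 = o(r^{2\gamma-1})$ so that no intermediate power appears, one finds $U\,\partial_r U = 2ff_{(2)}r + 2\gamma d_\gamma^{-1}f(P_{2\gamma}f)r^{2\gamma-1} + O(r^3)$ near $M$; since $m_0 = 1 - 2\gamma$ this yields
\[ \int_{r>\varepsilon}\Bigl(|\nabla U|^2 + \tfrac{m_0+n-1}{2}J_{\phi_0}^{m_0}U^2\Bigr)r^{m_0}\dvol_g = -2\varepsilon^{2-2\gamma}\int_M f f_{(2)}\,\dvol_h - \frac{2\gamma}{d_\gamma}\int_M f\,P_{2\gamma}f\,\dvol_h + O(\varepsilon^{4-2\gamma}), \]
the remainder vanishing as $\varepsilon \to 0$ because $\gamma < 2$.

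It remains to identify the divergent coefficient $\int_M ff_{(2)}\,\dvol_h$. Inserting $u = r^{n-s}(f + f_{(2)}r^2 + \cdots)$ into the Poisson equation~\eqref{eqn:poisson_equation}, writing $g_+ = r^{-2}(dr^2 + h_r)$, and reading off the coefficient of $r^{n-s+2}$ --- using the Fefferman--Graham relation $\tr_h h_{(2)} = -J_h$ and $n - s = \tfrac{n-2\gamma}{2}$ --- gives the standard identity $2(2\gamma-2)f_{(2)} = \Delta_h f - \tfrac{n-2\gamma}{2}J_h f$, so that $-4(\gamma-1)\int_M ff_{(2)}\,\dvol_h = \int_M\bigl(|\nabla f|^2 + \tfrac{n-2\gamma}{2}J_h f^2\bigr)\dvol_h$. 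Multiplying the previous display by $-2(\gamma-1)$ and adding $\varepsilon^{2-2\gamma}\int_M(|\nabla f|^2 + \tfrac{n-2\gamma}{2}J_hf^2)\dvol_h$, the divergent $\varepsilon^{2-2\gamma}$ terms cancel, and letting $\varepsilon \to 0$ produces $\tfrac{4\gamma(\gamma-1)}{d_\gamma}\int_M f\,P_{2\gamma}f\,\dvol_h$, which is~\eqref{eqn:12_renormalized_energy}.

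I expect the last step to be the main obstacle: computing the subleading Poisson coefficient $f_{(2)}$ exactly, including the curvature correction, which must be precisely $-\tfrac{n-2\gamma}{2}J_hf$ for the divergent contributions to cancel, together with checking --- via $\gamma \in (1,2)$ --- that the boundary expansion of $U\,\partial_r U$ contains no power strictly between $r$ and $r^{2\gamma-1}$ and that the $O(\varepsilon^{4-2\gamma})$ remainder genuinely tends to zero. Everything else is the integration by parts and bookkeeping already present in the proof of Theorem~\ref{thm:12_case}; applying Corollary~\ref{cor:12_inequality} to the left-hand side would recast~\eqref{eqn:12_renormalized_energy} in terms of the weighted Paneitz energy and exhibit it as the curved analogue of Theorem~\ref{thm:yang_renormalized_energy}.
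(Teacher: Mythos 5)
Your proof is correct and follows essentially the same route as the paper: use $L_{2,\phi_0}^{m_0}U=0$ to integrate by parts on $\{r>\varepsilon\}$, expand the resulting boundary term via the asymptotics $U=f+f_{(2)}r^2+d_\gamma^{-1}(P_{2\gamma}f)r^{2\gamma}+o(r^{2\gamma})$, and cancel the $\varepsilon^{2-2\gamma}$ divergence using $4(\gamma-1)f_{(2)}=\Delta_hf-\tfrac{n-2\gamma}{2}J_hf$. The only cosmetic difference is that you derive the $f_{(2)}$ identity directly from the Poisson equation, whereas the paper cites~\cite{GrahamZworski2003} and Guillarmou--Qing for it.
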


\begin{proof}

Let $U$ be the solution of~\eqref{eqn:12_case}, so also $L_{2,\phi_0}^{m_0}U=0$.  We compute that
\begin{equation}
\label{eqn:formal_computation}
\begin{split}
&\int_{r>\varepsilon} \left(\lv\nabla U\rv^2 + \frac{m_0+n-1}{2}J_{\phi_0}^{m_0}U^2\right)r^{m_0}\dvol \\
& = \int_{r>\varepsilon} U\,L_{2,\phi_0}^{m_0}U\,r^{m_0}\dvol - \varepsilon^{1-2\gamma}\int_{r=\varepsilon} U\frac{\partial U}{\partial r}\,\dvol_{h_\varepsilon} \\
& = - \varepsilon^{2-2\gamma}\int_{r=\varepsilon} U\left(2f_{(2)} + \frac{2\gamma}{d_\gamma}\varepsilon^{2\gamma-2}P_{2\gamma}f + O(\varepsilon^2)\right)\dvol_{h_\varepsilon} \\
& = - 2\varepsilon^{2-2\gamma}\int_{r=\varepsilon} f\,f_{(2)}\,\dvol_h - \frac{2\gamma}{d_\gamma}\int_{r=\varepsilon} f\,P_{2\gamma}f\,\dvol_h + O(\varepsilon^{4-2\gamma}),
\end{split}
\end{equation}
where the second equality follows from~\eqref{eqn:12_expansion}.  Since
\begin{equation}
\label{eqn:f2}
f_{(2)} = -\frac{1}{4(\gamma-1)}\left(-\Delta_{h} + \frac{n-2\gamma}{2}J_{h}\right)f
\end{equation}
(see~\cite{GrahamZworski2003} or~\cite[Equation~(22)]{GuillarmouQing2010}), integrating by parts on $M$ and taking the limit $\varepsilon\to0$ in~\eqref{eqn:formal_computation} yields~\eqref{eqn:12_renormalized_energy}.
\end{proof}

It is clear from the derivation of~\eqref{eqn:formal_computation} in the above proof that a similar statement for the renormalized energy of the scattering operator exists for all $\gamma$.  More precisely, using the asymptotics used in the proof of Theorem~\ref{thm:general_case}, one can write
\begin{align*}
& \int_{r>\varepsilon}\left(\lv\nabla U\rv^2 + \frac{m_0+n-1}{2}J_{\phi_0}^{m_0}U^2\right)r^{m_0}\dvol \\
& = I_{(2)}\varepsilon^{2-2\gamma} + I_{(4)}\varepsilon^{4-2\gamma} + \dotsb + I_{(2k)}\varepsilon^{2k-2\gamma} + \frac{2\gamma}{d_\gamma}\int_{r=\varepsilon} f\,P_{2\gamma}f\,\dvol_h + O\left(\varepsilon^{2(k+1-\gamma)}\right) ,
\end{align*}
where $k=\lfloor\gamma\rfloor$ and $I_{(2j)}$ are boundary integrals of local invariants on the boundary constructed from $f_{(2\ell)}$ and $v_{(2\ell-2)}$ for $\ell\in\{0,\dotsc,j\}$ and $j\in\{1,\dotsc,k\}$ and $v_{(2\ell)}$ the renormalized volume coefficients~\cite{Graham2000}.
\section{The adapted smooth metric measure space}
\label{sec:adapted}

While our extension theorems from Section~\ref{sec:extension} provide the means to study fractional GJMS operators $P_{2\gamma}$ by working in the interior of smooth metric measure spaces, they do not immediately suggest how to use assumptions on the fractional $Q$-curvature $Q_{2\gamma}$ to control $P_{2\gamma}$, such as in Theorem~\ref{thm:positive}.  In order to see the influence of the fractional $Q$-curvature, we introduce in this section the adapted smooth metric measure space --- which is exactly the conformal compactification obtained using the defining function $\rho^\ast$ constructed in~\cite[Lemma~4.5]{ChangGonzalez2011} --- and study some of its basic properties.

The adapted smooth metric measure space is a (non-smooth) compactification of a Poincar\'e--Einstein manifold associated to a choice of $\gamma\in(0,\frac{n}{2})\setminus\bN$ and a choice of representative $h$ of the conformal boundary which, roughly speaking, has the effect of pushing the fractional curvature $Q_{2\gamma}$ to the boundary.  Two key properties we need are that the corresponding weighted GJMS operators in the interior as used in Theorem~\ref{thm:01_case} and Theorem~\ref{thm:12_case} have vanishing constant terms and that the function $\Phi$ appearing in both the statements of both results is a multiple of $Q_{2\gamma}$.  In the case $\gamma\in(0,1)$, these properties are already enough to prove the nonnegativity of $P_{2\gamma}$ when $Q_{2\gamma}$ is nonnegative.  When $\gamma\in(1,2)$, we also need to know that the adapted metric has nonnegative scalar curvature.  This is a consequence of Proposition~\ref{prop:positive_scalar_curvature} below, which states more generally that if $\gamma>1$ and $h$ has nonnegative scalar curvature, then the adapted metric has nonnegative scalar curvature which is positive in the interior.

\subsection{The simple case $\gamma\in(0,1)$}  In this case, the important properties of the adapted smooth metric measure space are easily proven.  When we discuss the \emph{adapted smooth metric measure space}, we are really discussing, for a given Poincar\'e--Einstein manifold with a choice of representative $h$ of the conformal boundary and a choice of constant $\gamma\in(0,1)$, the smooth metric measure space~\eqref{eqn:01_rhostar_smms} constructed by the following lemma.  Likewise, the \emph{adapted defining function} is the defining function $y$ constructed below and the \emph{adapted metric} is the metric $g$ of~\eqref{eqn:01_rhostar_smms}.  This result is a reformulation of~\cite[Lemma~4.5]{ChangGonzalez2011} or~\cite[Proposition~2.2]{GonzalezQing2010}, though we point out that the additional assumption $\lambda_1(-\Delta_{g_+})>\frac{n^2}{4}-\gamma^2$, which is omitted in their statements, is actually necessary and sufficient for the adapted defining function to be a well-defined positive function in all of $X$.

\begin{lem}
\label{lem:01_rhostar}
Let $(X^{n+1},M^n,g_+)$ be a Poincar\'e--Einstein manifold, fix a representative $h$ of the conformal boundary, and let $r$ denote the geodesic defining function associated to $h$.  Let $\gamma\in(0,1)$, suppose that $\lambda_1(-\Delta_{g_+})>\frac{n^2}{4}-\gamma^2$, and set $m_0=1-2\gamma$.  Then there exists a unique defining function $y$ such that
\begin{equation}
\label{eqn:01_rhostar_asymptotics}
y = r + d_\gamma^{-1}Q_{2\gamma}r^{1+2\gamma} + O(r^3)
\end{equation}
and the smooth metric measure space
\begin{equation}
\label{eqn:01_rhostar_smms}
\left(X^{n+1},g:=y^2g_+,y^{m_0}\dvol_g,m_0-1\right)
\end{equation}
satisfies $J_{\phi_0}^{m_0}=0$.
\end{lem}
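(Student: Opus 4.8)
The plan is to identify the condition $J_{\phi_0}^{m_0}\equiv0$ with the requirement that a suitable power of $y$ solve the scattering Poisson equation, and then to construct $y$ directly from the Graham--Zworski solution. Throughout I would set $s=\frac n2+\gamma$, so that $s(n-s)=\frac{n^2}{4}-\gamma^2=\frac{(n-2\gamma)(n+2\gamma)}{4}$ and $n-s=\frac{n-2\gamma}{2}=\frac{m_0+n-1}{2}$, which is positive since $\gamma<\frac n2$.

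First I would reduce to the Poisson equation exactly as in the proof of Theorem~\ref{thm:01_case}. Since the weighted Laplacian annihilates constants, \eqref{eqn:weighted_gjms12} gives $L_{2,\phi_0}^{m_0}(1)=\frac{m_0+n-1}{2}J_{\phi_0}^{m_0}$, so $J_{\phi_0}^{m_0}\equiv0$ if and only if $L_{2,\phi_0}^{m_0}(1)=0$. Combining the conformal covariance \eqref{eqn:weighted_gjms_covariant} of the weighted conformal Laplacian of \eqref{eqn:01_rhostar_smms} with the factorization $(L_{2,\phi_0}^{m_0})_+=-\Delta_{g_+}-s(n-s)$ from Theorem~\ref{thm:weighted_gjms_factorization} yields
\[
L_{2,\phi_0}^{m_0}(1)=y^{-\frac{m_0+n+3}{2}}\bigl(-\Delta_{g_+}-s(n-s)\bigr)\bigl(y^{n-s}\bigr)
\]
(one may also verify this directly from Lemma~\ref{lem:pe_smms_formulae}). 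Hence a defining function $y$ normalized so that $y^2g_+\rv_{TM}=h$ gives rise to a smooth metric measure space \eqref{eqn:01_rhostar_smms} with $J_{\phi_0}^{m_0}\equiv0$ precisely when $w:=y^{n-s}$ solves the Poisson equation \eqref{eqn:poisson_equation} for this value of $s$.

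Next I would use the scattering theory recalled in Section~\ref{sec:scattering}. The hypothesis $\lambda_1(-\Delta_{g_+})>\frac{n^2}{4}-\gamma^2=s(n-s)$ says precisely that $-\Delta_{g_+}-s(n-s)$ is a positive operator; in particular $s(n-s)\notin\sigma_{\mathrm{pp}}(-\Delta_{g_+})$, so there is a unique solution $w$ of \eqref{eqn:poisson_equation} of the form $w=Fr^{n-s}+Hr^s$ with $F,H\in C^\infty(\oX)$ and $F\rv_M=1$. Since $F\rv_M=1>0$ and $n-s>0$, we have $w>0$ near $M$, so the negative part $w^{-}$ has compact support in $X$; testing the equation against $w^{-}$ and using $\int_X\lv\nabla w^{-}\rv^2\geq\lambda_1\int_X(w^{-})^2$ gives $(\lambda_1-s(n-s))\int_X(w^{-})^2\leq0$, whence $w^{-}\equiv0$, and the strong maximum principle then forces $w>0$ throughout $X$ (it cannot vanish at an interior point without vanishing identically, which would contradict $F\rv_M=1$). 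Thus $y:=w^{1/(n-s)}$ is a genuine, positive defining function for $M$, generally only polyhomogeneous conormal at $M$, and it satisfies $J_{\phi_0}^{m_0}\equiv0$ by reversing the reduction above. Uniqueness follows because any such $y$ gives a solution $w=y^{n-s}$ of \eqref{eqn:poisson_equation} with $F\rv_M=1$, and these are unique. When $\lambda_1\leq s(n-s)$ the corresponding $w$ either does not exist or can fail to be positive on all of $X$, so the spectral hypothesis is also necessary for $y$ to be globally defined.

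Finally I would extract the asymptotics \eqref{eqn:01_rhostar_asymptotics}. Writing $r^s=r^{n-s}r^{2\gamma}$ and using the expansion \eqref{eqn:F_expansion} with $f=1$ (so $F=1+f_{(2)}r^2+\cdots$), together with $H\rv_M=S(s)(1)=d_\gamma^{-1}P_{2\gamma}(1)=d_\gamma^{-1}\tfrac{n-2\gamma}{2}Q_{2\gamma}$ from \eqref{eqn:scattering_definition} and \eqref{eqn:q_scattering}, one obtains
\[
y=r\bigl(F+Hr^{2\gamma}\bigr)^{1/(n-s)}=r+\frac{H\rv_M}{n-s}\,r^{1+2\gamma}+(\text{terms of order higher than }r^{1+2\gamma}),
\]
and $\frac{H\rv_M}{n-s}=d_\gamma^{-1}Q_{2\gamma}$ because $n-s=\tfrac{n-2\gamma}{2}$, which is \eqref{eqn:01_rhostar_asymptotics}. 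I expect the one genuinely substantive step to be the global positivity of $w$: the reduction to the Poisson equation is bookkeeping once Theorem~\ref{thm:weighted_gjms_factorization} is available, and the asymptotics are read straight off the known expansion of the Poisson solution, but ensuring that $y$ is defined and positive on all of $X$ (rather than only near $M$) is exactly where the spectral-gap hypothesis enters, and this is the subtlety absent from \cite{ChangGonzalez2011,GonzalezQing2010}.
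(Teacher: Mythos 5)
Your proof follows essentially the same route as the paper's: reduce $J_{\phi_0}^{m_0}\equiv0$ to the Poisson equation~\eqref{eqn:poisson_equation} via conformal covariance and the factorization of Theorem~\ref{thm:weighted_gjms_factorization}, set $y=v^{2/(n-2\gamma)}$ for the Graham--Zworski solution normalized by $F\rv_M=1$, and read off~\eqref{eqn:01_rhostar_asymptotics} from the expansion~\eqref{eqn:F_expansion} together with $S(s)1=d_\gamma^{-1}\tfrac{n-2\gamma}{2}Q_{2\gamma}$. The one place you deviate is in establishing positivity of the Poisson solution from the spectral gap: the paper invokes the maximum principle result of Fischer-Colbrie--Schoen, whereas you give a self-contained variational argument (the negative part $w^-$ is compactly supported away from $M$, so the Rayleigh quotient bound $\int\lv\nabla w^-\rv^2\geq\lambda_1\int(w^-)^2$ combined with the equation forces $w^-\equiv0$, and the strong maximum principle upgrades $w\geq0$ to $w>0$); this is correct and arguably more transparent, but it is a substitution of a lemma, not a different proof strategy.
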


\begin{proof}

Set $s=\frac{n}{2}+\gamma$ and let $v$ be the unique solution to
\begin{equation}
\label{eqn:rhostar_scattering}
-\Delta_{g_+}v - s(n-s)v = 0
\end{equation}
with $r^{s-n}v\rv_M = 1$.  Recalling that $s(n-s)\not\in\sigma(-\Delta_{g_+})$, it follows from the maximum principle (cf.\ \cite[Theorem~1]{FischerColbrieSchoen1980}) that $v>0$ if and only if $\lambda_1(-\Delta_{g_+})>s(n-s)$.  Since $s(n-s)=\frac{n^2}{4}-\gamma^2$, our assumptions imply that $v>0$.  From~\eqref{eqn:poisson_asymptotics} and~\eqref{eqn:F_expansion} it follows that
\[ v = r^{\frac{n-2\gamma}{2}} + \frac{n-2\gamma}{2d_\gamma}Q_{2\gamma}r^{\frac{n+2\gamma}{2}} + O\left(r^{\frac{n-2\gamma+4}{2}}\right) . \]
Then $y:=v^{\frac{2}{n-2\gamma}}$ has the desired asymptotic expansion~\eqref{eqn:01_rhostar_asymptotics}.  Moreover, as in the proof of Theorem~\ref{thm:01_case}, we may write~\eqref{eqn:rhostar_scattering} in terms of the smooth metric measure space $(X^{n+1},g_+,1^{m_0}\dvol,m_0-1)$ as
\[ \left(L_{2,\phi}^{m_0}\right)_+y^{\frac{m_0+n-1}{2}} = 0 . \]
Using the conformal covariance of the weighted conformal Laplacian, it follows that the smooth metric measure space~\eqref{eqn:01_rhostar_smms} is such that
\[ \frac{m_0+n-1}{2}J_{\phi_0}^{m_0} = L_{2,\phi_0}^{m_0}(1) = y^{-\frac{m_0+n+3}{2}}\left(L_{2,\phi_0}^{m_0}\right)_+\left(y^{\frac{m_0+n-1}{2}}\right) = 0 . \qedhere \]
\end{proof}

\subsection{The case $\gamma\in(1,2)$}  The adapted smooth metric measure space arises in this case by again considering~\eqref{eqn:rhostar_scattering} with $r^{s-n}v\rv_M=1$.  However, for applications we need to know some additional properties of the adapted smooth metric measure space in this case, and so we consider the following analogue of Lemma~\ref{lem:01_rhostar}.  Note that when $n=3$ and $\gamma=3/2$, the adapted defining function was used by Fefferman and Graham~\cite{FeffermanGraham2002} to study the critical $Q$-curvature, and the fact that the adapted metric has vanishing $Q$-curvature was observed and used by Chang, Qing and Yang~\cite{ChangQingYang2006} to study the renormalized volume of Poincar\'e--Einstein manifolds.  Indeed, their observations extend to all odd dimensions $n$ with the choice $\gamma=n/2$ (cf.\ Remark~\ref{rk:higher_defining_fn}).

\begin{lem}
\label{lem:12_rhostar}
Let $(X^{n+1},M^n,g_+)$ be a Poincar\'e--Einstein manifold, fix a representative $h$ of the conformal boundary, and let $r$ denote the geodesic defining function associated to $h$.  Let $\gamma\in(1,2)$ and set $m_1=3-2\gamma$.  If $\lambda_1(-\Delta_{g_+})>\frac{n^2}{4}-\gamma^2$, then there exists a unique defining function $y$ such that
\begin{equation}
\label{eqn:12_rhostar_asymptotics}
y = r - \frac{J_h}{4(\gamma-1)}r^3 + d_\gamma^{-1}Q_{2\gamma}r^{1+2\gamma} + O(r^5)
\end{equation}
and the smooth metric measure space
\begin{equation}
\label{eqn:12_rhostar_smms}
\left(X^{n+1},g:=y^2g_+,y^{m_1}\dvol_g,m_1-1\right)
\end{equation}
satisfies $Q_{\phi_1}^{m_1} = 0 = J_{\phi_0}^{m_0}$, where $J_{\phi_0}^{m_0}$ is the weighted Schouten scalar of~\eqref{eqn:01_rhostar_smms}.
\end{lem}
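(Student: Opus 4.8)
The plan is to follow the proof of Lemma~\ref{lem:01_rhostar} essentially verbatim, the one new observation being that a single solution of the scattering equation simultaneously annihilates both of the weighted curvatures appearing in the conclusion. First I would set $s=\tfrac{n}{2}+\gamma$ and let $v$ be the unique solution of $-\Delta_{g_+}v-s(n-s)v=0$ with $r^{s-n}v\rv_M=1$; this exists and is unique since $s(n-s)=\tfrac{n^2}{4}-\gamma^2\notin\sigma_{\mathrm{pp}}(-\Delta_{g_+})$, and it is strictly positive on $X$ precisely because of the hypothesis $\lambda_1(-\Delta_{g_+})>\tfrac{n^2}{4}-\gamma^2$, by the maximum principle exactly as in Lemma~\ref{lem:01_rhostar}. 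One then sets $y:=v^{\frac{2}{n-2\gamma}}$, which is a genuine defining function on $\oX$ with $v=y^{(n-2\gamma)/2}$; its uniqueness follows from that of $v$.

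The numerical coincidence to exploit is that $\tfrac{m_0+n-1}{2}=\tfrac{m_1+n-3}{2}=\tfrac{n-2\gamma}{2}=n-s$, so that $v=y^{\frac{m_0+n-1}{2}}=y^{\frac{m_1+n-3}{2}}$. By Theorem~\ref{thm:weighted_gjms_factorization} and the choices of $m_0$ and $m_1$, the weighted conformal Laplacian $(L_{2,\phi_0}^{m_0})_+$ of $(X^{n+1},g_+,1^{m_0}\dvol,m_0-1)$ and the weighted Paneitz operator $(L_{4,\phi_1}^{m_1})_+$ of $(X^{n+1},g_+,1^{m_1}\dvol,m_1-1)$ are
\[
\bigl(L_{2,\phi_0}^{m_0}\bigr)_+ = -\Delta_{g_+} - \tfrac{(n-2\gamma)(n+2\gamma)}{4} , \qquad \bigl(L_{4,\phi_1}^{m_1}\bigr)_+ = \Bigl(-\Delta_{g_+} - \tfrac{(n-2\gamma+4)(n+2\gamma-4)}{4}\Bigr)\circ\Bigl(-\Delta_{g_+} - \tfrac{(n-2\gamma)(n+2\gamma)}{4}\Bigr) ,
\]
and since $\tfrac{(n-2\gamma)(n+2\gamma)}{4}=s(n-s)$, the function $v$ lies in the kernel of each. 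Arguing as in the proofs of Theorem~\ref{thm:01_case} and Theorem~\ref{thm:12_case}, the conformal covariance~\eqref{eqn:weighted_gjms_covariant} of these two operators together with~\eqref{eqn:weighted_gjms12} gives, for the metric $g=y^2g_+$,
\[
\tfrac{n-2\gamma}{2}J_{\phi_0}^{m_0} = L_{2,\phi_0}^{m_0}(1) = y^{-\frac{m_0+n+3}{2}}\bigl(L_{2,\phi_0}^{m_0}\bigr)_+v = 0 , \qquad \tfrac{n-2\gamma}{2}Q_{\phi_1}^{m_1} = L_{4,\phi_1}^{m_1}(1) = y^{-\frac{m_1+n+5}{2}}\bigl(L_{4,\phi_1}^{m_1}\bigr)_+v = 0 ,
\]
so that $J_{\phi_0}^{m_0}=0=Q_{\phi_1}^{m_1}$ since $n\neq2\gamma$, as required.

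It remains to read off~\eqref{eqn:12_rhostar_asymptotics}. Writing $v=Fr^{n-s}+Hr^s$ with $F,H\in C^\infty(\oX)$ and $F\rv_M=1$ as in~\eqref{eqn:poisson_asymptotics}, the expansion~\eqref{eqn:F_expansion} gives $F=1+f_{(2)}r^2+O(r^4)$ with $f_{(2)}=-\tfrac{n-2\gamma}{8(\gamma-1)}J_h$ by~\eqref{eqn:f2}, while $H\rv_M=d_\gamma^{-1}P_{2\gamma}(1)=d_\gamma^{-1}\tfrac{n-2\gamma}{2}Q_{2\gamma}$ by~\eqref{eqn:scattering_definition} and~\eqref{eqn:q_scattering}. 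Since $h$ is even to all orders (and $\gamma\notin\bN$), both $F$ and $H$ are even in $r$, so that $r^{s-n}v=F+Hr^{2\gamma}$ equals $1+f_{(2)}r^2+d_\gamma^{-1}\tfrac{n-2\gamma}{2}Q_{2\gamma}r^{2\gamma}$ plus terms of orders $r^4,r^6,\dots$ and $r^{2\gamma+2},r^{2\gamma+4},\dots$. Raising to the power $\tfrac{2}{n-2\gamma}$ and multiplying by $r=(r^{n-s})^{2/(n-2\gamma)}$, and using $\tfrac{2}{n-2\gamma}\cdot\tfrac{n-2\gamma}{2}=1$ and $\tfrac{2}{n-2\gamma}f_{(2)}=-\tfrac{J_h}{4(\gamma-1)}$, the binomial series then yields $y=r-\tfrac{J_h}{4(\gamma-1)}r^3+d_\gamma^{-1}Q_{2\gamma}r^{1+2\gamma}+O(r^5)$.

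I expect the only delicate point to be this last step: one must verify that raising $r^{s-n}v$ to the power $\tfrac{2}{n-2\gamma}$ produces no term of order strictly between $r^{1+2\gamma}$ and $r^5$. The only candidates are the cross terms between the series $r^{n-s}F$ and $r^sH$ and the square of the $r^{2\gamma}$-term, which contribute to $y$ at orders $r^{3+2\gamma}$ and $r^{1+4\gamma}$, and both exponents exceed $5$ precisely because $\gamma>1$ --- the same place the hypothesis $\gamma>1$ is used, through the factor $(\gamma-1)^{-1}$ in $f_{(2)}$. Modulo this bookkeeping, the argument is a direct transcription of the proof of Lemma~\ref{lem:01_rhostar}.
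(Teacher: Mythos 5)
Your argument for the generic case $\gamma\neq n/2$ follows the paper's proof essentially line-for-line: define $y=v^{2/(n-2\gamma)}$ via the scattering solution $v$ normalized by $r^{s-n}v\rv_M=1$, apply the factorization Theorem~\ref{thm:weighted_gjms_factorization} and conformal covariance to kill both $J_{\phi_0}^{m_0}$ and $Q_{\phi_1}^{m_1}$ simultaneously, then read off the asymptotics. That part is correct, and the bookkeeping you flag (cross terms at orders $r^{3+2\gamma}$ and $r^{1+4\gamma}$, both $>5$ since $\gamma>1$) is the right thing to check.

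The genuine gap is the critical case $\gamma=n/2$, which the lemma does \emph{not} exclude: with $\gamma\in(1,2)$, the value $\gamma=n/2=3/2$ occurs precisely when $n=3$, and this $3+1$ case is singled out in the introduction as one of the more interesting ones. In that case $s=n$, so $v=\mP(n)1\equiv1$ and your formula $y=v^{2/(n-2\gamma)}$ is the indeterminate expression $1^{\infty}$; likewise the factor $n-2\gamma$ that you divide by to conclude $J_{\phi_0}^{m_0}=0=Q_{\phi_1}^{m_1}$ vanishes. You actually write ``since $n\neq2\gamma$'' as if this were a hypothesis, but it is not. The paper spends roughly half the proof on exactly this point: it sets $v_s=\mP(s)1$, writes $v_s=e^{(n-s)w_s}$ so that $y_s:=v_s^{1/(n-s)}=e^{w_s}$ is analytic through $s=n$, defines $y:=y_n$, and then invokes analyticity in $s$ (equivalently in $m$) of $J_\phi^m$ and $Q_\phi^m$ to propagate the vanishing from $s\neq n$ to $s=n$. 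You would need to supply a limiting argument of this kind to close the gap; without it the lemma is unproved for $n=3$, $\gamma=3/2$.
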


\begin{proof}

Consider first the case $\gamma\not=\frac{n}{2}$.  Set $s=\frac{n}{2}+\gamma$ and let $v$ be the unique solution to~\eqref{eqn:rhostar_scattering} with $r^{s-n}v\rv_M=1$. From~\eqref{eqn:poisson_asymptotics}, \eqref{eqn:F_expansion}, and~\eqref{eqn:f2} it follows that
\[ v = \left(1 - \frac{n-2\gamma}{8(\gamma-1)}J_hr^2 + O(r^4)\right)r^{\frac{n-2\gamma}{2}} + \left(\frac{n-2\gamma}{2d_\gamma}Q_{2\gamma} + O(r^2)\right)r^{\frac{n+2\gamma}{2}} . \]
Since $\lambda_1(-\Delta_{g_+})>\frac{n^2}{4}-\gamma^2$, it follows that $v>0$.  Set $y=v^{\frac{2}{n-2\gamma}}$, so that $y$ has the asymptotic expansion~\eqref{eqn:12_rhostar_asymptotics}.  Since $y$ is obtained exactly as in Lemma~\ref{lem:01_rhostar}, it follows that~\eqref{eqn:01_rhostar_smms} is such that $J_{\phi_0}^{m_0}=0$.  Using the weighted Paneitz operator in place of the weighted conformal Laplacian and computing as in the proof of Theorem~\ref{thm:12_case}, we also find that
\[ \frac{m_1+n-3}{2}Q_{\phi_1}^{m_1} = L_{4,\phi_1}^{m_1}(1) = y^{-\frac{m_1+n+5}{2}}\left(L_{4,\phi_1}^{m_1}\right)_+\left(y^{\frac{m_1+n-3}{2}}\right) = 0 , \]
which completes the proof for $\gamma\not=\frac{n}{2}$.

Suppose now that $\gamma=\frac{n}{2}$, and hence $n=3$.  Since the Poisson operator $\mP(s)$ is analytic at $s=\frac{n}{2}+\gamma=3$, it follows that the functions $v_s=\mP(s)1$ are analytic at $s=3$.  Moreover, it is clear from~\eqref{eqn:rhostar_scattering} that $v_3\equiv1$.  It follows that the functions $w_s\in C^\infty(X)$ defined by $v_s=e^{(n-s)w_s}$ are analytic at $s=3$ and satisfy $\left(w_s-\log r\right)\rv_M=0$.  Defining $y_s:=v_s^{\frac{1}{n-s}}$ as in the previous paragraph, we see that $y_s=e^{w_s}$ is analytic at $s=n$; in particular, we may define $y:=y_n$.  By analyticity, $y$ satisfies~\eqref{eqn:12_rhostar_asymptotics}, and moreover, the analyticity of $J_\phi^m$ and $Q_\phi^m$ in $m$ --- a fact easily seen from the formulae given in Section~\ref{sec:smms} --- implies, via the previous paragraph, that $J_{\phi_0}^{m_0}=0=Q_{\phi_1}^{m_1}$, as desired.
\end{proof}

\begin{remark}
\label{rk:higher_defining_fn}
Lemma~\ref{lem:12_rhostar} is in fact true for all $\gamma\in(1,\frac{n}{2}]\setminus\bN$, except that there are generally more terms which are odd powers of $r$ in the expansion~\eqref{eqn:12_rhostar_asymptotics} of order less than $1+2\gamma$.  The above proof works verbatim and, moreover, implies the vanishing of higher order weighted $Q$-curvature for the appropriately defined smooth metric measure spaces (cf.\ Theorem~\ref{thm:general_case}).
\end{remark}

As mentioned in the introduction, for $\gamma>1$ the adapted metric has an additional property that we need, namely that if the corresponding conformal representative for the conformal boundary has nonnegative scalar curvature, then the adapted metric also has nonnegative scalar curvature.  This is a generalization of Lee's result~\cite{Lee1995} relating the Yamabe constant of the conformal boundary to properties of the Laplacian of the asymptotically hyperbolic metric (see also~\cite{GuillarmouQing2010} where the relationship between scalar curvatures is mentioned explicitly).

\begin{prop}
\label{prop:positive_scalar_curvature}
Let $(X^{n+1},M^n,g_+)$ be a Poincar\'e--Einstein manifold and suppose there is a representative $h$ of the conformal boundary with nonnegative scalar curvature.  Let $\gamma\in(1,2)$ and let $y$ be the adapted defining function.  Then the scalar curvature $R_{g}$ of the adapted metric $g:=y^2g_+$ is positive in $X$.
\end{prop}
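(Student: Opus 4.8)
The plan is to reduce the statement to a sharp gradient estimate for the adapted defining function $y$ and then to establish that estimate by a maximum principle on the compactification $\oX$. For the reduction, recall from Lemma~\ref{lem:12_rhostar} that the smooth metric measure space~\eqref{eqn:01_rhostar_smms} with $m_0=1-2\gamma$ has vanishing weighted Schouten scalar $J_{\phi_0}^{m_0}=0$; applying Lemma~\ref{lem:pe_smms_formulae} with $\rho=y$ and $m=m_0$ and combining~\eqref{eqn:J_to_grad} with~\eqref{eqn:J_to_weight}, this is exactly
\[ R_g = 2nJ_g = nm_0\,y^{-2}\bigl(\lv\nabla y\rv_g^2-1\bigr), \]
where I have also used $\lv\nabla y\rv_g^2=\lv\nabla\log y\rv_{g_+}^2$. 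Since $\gamma>1$ forces $m_0<0$, proving $R_g>0$ on $X$ is equivalent to proving $\lv\nabla y\rv_g^2<1$ on $X$.

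To set this up, write $\eta=\log y$. By the construction in Lemma~\ref{lem:12_rhostar}, $v:=y^{n-s}$ with $s=\tfrac n2+\gamma$ is the positive solution of $-\Delta_{g_+}v-s(n-s)v=0$; dividing by $v$ turns this into $\Delta_{g_+}\eta+(n-s)\lv\nabla\eta\rv_{g_+}^2=-s$ on $X$. Moreover, the expansion~\eqref{eqn:12_rhostar_asymptotics} together with the fact that $r$ is geodesic (so $\lv\nabla r\rv_{r^2g_+}\equiv1$) gives $\lv\nabla y\rv_g^2=1-\tfrac1{\gamma-1}J_hr^2+O(r^{2\gamma})$ near $M$, so $\lv\nabla y\rv_g^2\to1$ at $M$ and, because $\gamma>1$ and $R_h\ge0$, the leading boundary correction is nonpositive. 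I would then run a maximum principle for $\psi:=\lv\nabla\eta\rv_{g_+}^2$ on $\oX$, where $\psi\equiv1$ on the boundary. Differentiating the equation for $\eta$ and invoking the Bochner formula with $\Ric(g_+)=-ng_+$ gives
\[ \tfrac12\Delta_{g_+}\psi = \lv\nabla^2\eta\rv_{g_+}^2 - (n-s)\lp\nabla\eta,\nabla\psi\rp_{g_+} - n\psi, \]
and at an interior point $x_0$ with $\psi(x_0)>1$ the relations $\nabla\psi(x_0)=0$ --- so that $\nabla\eta$ is a null direction of $\nabla^2\eta$ and hence $\lv\nabla^2\eta\rv_{g_+}^2\ge\tfrac1n(\Delta_{g_+}\eta)^2$ at $x_0$ --- and $\Delta_{g_+}\psi(x_0)\le0$ combine to force the polynomial inequality $n^2\psi(x_0)\ge\bigl(s+(n-s)\psi(x_0)\bigr)^2$.

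This is the main obstacle: the last inequality by itself only yields $\psi\le\bigl(\tfrac s{n-s}\bigr)^2$, which exceeds $1$, so one must genuinely use $R_h\ge0$ to sharpen it to $\psi\le1$. The extra leverage should come from the boundary behavior recorded above (which pins $\psi$ to $1$ from below near $M$) together with the second normalization $Q_{\phi_1}^{m_1}=0$ of the adapted space --- equivalently, with the relation between the signs of $R_g$ and $Q_{2\gamma}$ near $M$. Once $\lv\nabla y\rv_g^2\le1$ is established on $\oX$, the reduction gives $R_g\ge0$ on $\oX$.

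To upgrade this to strict positivity on $X$, I would use that, by Lemma~\ref{lem:12_rhostar}, the adapted space~\eqref{eqn:12_rhostar_smms} also has $Q_{\phi_1}^{m_1}=0$. Lemma~\ref{lem:pe_smms_formulae} gives $P_{\phi_1}^{m_1}=P_g$, $Y_{\phi_1}^{m_1}=\tfrac{m_1}2J_{\phi_1}^{m_1}$ and $J_{\phi_1}^{m_1}=y^{-2}\bigl(1-\lv\nabla y\rv_g^2\bigr)$, the last being a nonnegative constant multiple of $R_g$ by the previous step; substituting these into the formula for $Q_\phi^m$ in~\eqref{eqn:weighted_gjms12} collapses $Q_{\phi_1}^{m_1}=0$ to $\Delta_{\phi_1}J_{\phi_1}^{m_1}=\tfrac{n+1}2\bigl(J_{\phi_1}^{m_1}\bigr)^2-2\lv P_g\rv^2$. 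Since $J_{\phi_1}^{m_1}$ is bounded and nonnegative, this presents it as a supersolution of a linear elliptic operator on $X$, so the strong maximum principle gives either $J_{\phi_1}^{m_1}>0$ throughout $X$, whence $R_g>0$ there as claimed, or $J_{\phi_1}^{m_1}\equiv0$. In the latter case the displayed identity forces $P_g\equiv0$ and $J_g\equiv0$, so $g$ is Einstein with zero scalar curvature, hence (by a concircular-function argument) flat, so $(X,g_+)$ is hyperbolic space; but the conformal infinity of hyperbolic space is the round conformal sphere, which admits no scalar-flat representative, contradicting $R_g\equiv0$ (equivalently $R_h\equiv0$). This completes the argument.
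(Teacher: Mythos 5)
Your reduction is correct: writing $\psi=\lv\nabla y\rv_g^2=\lv\nabla\log y\rv_{g_+}^2$ and using $J_{\phi_0}^{m_0}=0$ together with Lemma~\ref{lem:pe_smms_formulae} does give $R_g=nm_0\,y^{-2}(\psi-1)$ with $m_0<0$, so the claim is equivalent to $\psi<1$ in $X$.  Your Bochner computation is also set up correctly.  But as you already flag, the interior maximum-principle step does not close: at an interior maximum $x_0$ with $\psi(x_0)=\psi_0\geq1$ you get $n^2\psi_0\geq(s+(n-s)\psi_0)^2$, whose solution set is exactly $\psi_0\in[1,s^2/(n-s)^2]$ --- so the inequality is \emph{consistent} with $\psi_0>1$, and the boundary asymptotics (which only show $\psi\leq1+o(1)$ near $M$, not $\psi\leq1$ on all of $\oX$) do not rule this out.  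The phrase ``the extra leverage should come from $\ldots$ together with $Q_{\phi_1}^{m_1}=0$'' names a hope, not an argument: the PDE for $J_{\phi_1}^{m_1}$ coming from $Q_{\phi_1}^{m_1}=0$ has a $+J^2$ term on the right, so at a negative interior minimum of $J$ it again yields no contradiction without already knowing $J\geq0$.  That circularity is precisely the gap.

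The paper avoids it by running a continuity argument in $s\in[\tfrac n2+\gamma,n+1]$ rather than a direct gradient estimate.  The anchor is Lee's theorem (at $s=n+1$): under $R_h\geq0$ the compactification $v_{n+1}^{-2}g_+$ has positive scalar curvature.  Under $R_h\geq0$ one has $\lambda_1(-\Delta_{g_+})=n^2/4$, so the family $s\mapsto y_s$ of adapted defining functions is analytic, hence the set $I$ of $s$ with $R_{g_s}>0$ is open and nonempty.  For closedness, a limit point $s$ has $R_{g_s}\geq0$ \emph{by continuity} (this is how the missing sign hypothesis gets supplied), and then the equation $Q_{\phi_1}^{m_1}=0$ exhibits $J_s\geq0$ as a bounded supersolution of a linear elliptic operator; the strong maximum principle forces $J_s>0$ or $J_s\equiv0$, and the latter contradicts $\lambda_1=n^2/4$ as in Guillarmou--Qing.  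Your final ``strict positivity'' paragraph is essentially this closedness step, but you invoke it with $R_g\geq0$ as an input you have not established.  So your proposal takes a genuinely different route, and it is the middle step --- upgrading ``the boundary data are favorable'' to a global bound $\psi\leq1$ --- that would need a new idea; as written it is not there.
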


\begin{proof}

Set $y_{n+1}=v_{n+1}^{-1}$ for $v_{n+1}=\mP(n+1)1$.  By definition, this is the adapted defining function for the parameter $\frac{n}{2}+1$.  On the other hand, using the assumption $R_h\geq0$, Lee showed~\cite{Lee1995} (see also~\cite[Lemma~2.2]{GuillarmouQing2010}) that the adapted metric $g_{n+1}:=y_{n+1}^2g_+$ has positive scalar curvature in $X$.

We now use the continuity method.  Let $I$ denote the set of all $s\in\left[\frac{n}{2}+\gamma,n+1\right]$ such that the scalar curvature $R_s$ of $g_s$ is positive for $g_s$ the adapted metric associated to $(X^{n+1},M^n,g_+)$ and the parameter $s-\frac{n}{2}$.  We have already seen that $n+1\in I$.  Hence it suffices to show that $I$ is open and closed.

Since $R_h\geq0$, it holds that $\lambda_1(-\Delta_{g_+})=\frac{n^2}{4}$; see~\cite{Lee1995}.  Hence the Poisson operator $\mP(s)$ is analytic for $s\in(\frac{n}{2},\infty)$; see~\cite{GrahamZworski2003}.  This guarantees that the adapted defining functions $y_s$ associated to $s\in\left[\frac{n}{2}+\gamma,n+1\right]$ are analytic in $s$, and hence $g_s:=y_s^2g_+$ form an analytic family of metrics in $X$.  In particular, $I$ is open.

Suppose that $s$ is a limit point of $I$.  Then $R_s\geq0$.  From Lemma~\ref{lem:12_rhostar} we have that $Q_{\phi_1}^{m_1}=0$ and $J_{\phi_0}^{m_0}=0$ for the adapted smooth metric measure spaces~\eqref{eqn:12_rhostar_smms} and~\eqref{eqn:01_rhostar_smms}, respectively, determined by the parameter $s-\frac{n}{2}$.  By Lemma~\ref{lem:pe_smms_formulae}, the latter condition implies that $J_{\phi_1}^{m_1}=\frac{2}{2s-n-1}J_s$, where $J_s=\frac{R_s}{2n}$ is the trace of the Schouten tensor $P_s$ of $g_s$.  On the other hand, the former condition implies that
\begin{equation}
\label{eqn:vanishing_q2gamma}
\frac{1}{2s-n-1}\left(\Delta_{g_s} + m_1y_s^{-1}\frac{\partial}{\partial y_s}\right)J_s = -\lv P_s\rv_{g_s}^2 + \frac{n+1}{(2s-n-1)^2}J_s^2 .
\end{equation}
If $s\not\in I$, then there is a point $x\in X$ such that $J_s(x)=0$, and hence, by the strong maximum principle, $J_s\equiv0$.  By arguing as in~\cite[Lemma~2.2]{GuillarmouQing2010}, we see that this contradicts the fact $\lambda_1(-\Delta_{g_+})=\frac{n^2}{4}$.  Thus $s\in I$; i.e.\ $I$ is closed.
\end{proof}

\begin{remark}
The above proof shows that if $(X^{n+1},M^n,g_+)$ is a Poincar\'e--Einstein manifold with conformal boundary having nonnegative Yamabe constant, then for any $\gamma>1$ the adapted metric has positive scalar curvature in $X$.
\end{remark}

In addition to giving a sign on the scalar curvature of the adapted metric, Proposition~\ref{prop:positive_scalar_curvature} leads to a strong rigidity result for Poincar\'e--Einstein manifolds which admit a scalar flat representative of the conformal boundary.  By way of motivation, recall that if $(M^n,g)$ is scalar flat, then the fourth-order $Q$-curvature is nonpositive and vanishes identically if and only if $(M^n,g)$ is Ricci flat.  For Poincar\'e--Einstein manifolds, the same result is true when replacing the fourth-order $Q$-curvature by $Q_{2\gamma}$ for any $\gamma\in(1,2)$, though the rigid case cannot happen.  As a first step in this direction, we observe that scalar flat representatives of Poincar\'e--Einstein manifolds have nonpositive $Q_{2\gamma}$ for $\gamma\in(1,2)$.

\begin{cor}
\label{cor:signs_of_r_and_q}
Let $(X^{n+1},M^n,g_+)$ be a Poincar\'e--Einstein manifold and suppose there is a scalar-flat representative $h$ of the conformal boundary.  Then for any $\gamma\in(1,2)$, it holds that $Q_{2\gamma}\leq0$.  Moreover, if $Q_{2\gamma}\equiv0$, then $h$ is Ricci flat.
\end{cor}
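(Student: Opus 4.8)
The plan is to pass to the adapted smooth metric measure space of Lemma~\ref{lem:12_rhostar} for the given $\gamma$ and to play off the positivity of the scalar curvature of the adapted metric against the way $Q_{2\gamma}$ enters its boundary asymptotics. Since $h$ is scalar flat we have $R_h=0\ge0$, so by Lee~\cite{Lee1995} (cf.\ the proof of Proposition~\ref{prop:positive_scalar_curvature}) $\lambda_1(-\Delta_{g_+})=\tfrac{n^2}{4}>\tfrac{n^2}{4}-\gamma^2$; hence the adapted defining function $y$ is well defined, and Proposition~\ref{prop:positive_scalar_curvature} gives that the adapted metric $g:=y^2g_+$ has $R_g>0$ everywhere in $X$, i.e.\ its Schouten scalar $J:=J_g$ is positive in $X$. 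Because $J_h=0$, Lemma~\ref{lem:12_rhostar} yields $y=r+d_\gamma^{-1}Q_{2\gamma}\,r^{1+2\gamma}+O(r^5)$, so in the notation of Theorem~\ref{thm:12_case} one has $\rho_{(2)}=0$ and $\Phi=d_\gamma^{-1}Q_{2\gamma}$; substituting this together with $\tr_h h_{(2)}=-J_h=0$ into~\eqref{eqn:12_case_J} gives the boundary expansion $J=2\gamma m_0 d_\gamma^{-1}Q_{2\gamma}\,r^{2\gamma-2}+O(r^2)$, where $m_0=1-2\gamma$.

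For the first assertion, observe that since $\gamma\in(1,2)$ we have $2\gamma-2\in(0,2)$, so the displayed term is the leading term of $J$ at $M$, and its coefficient $2\gamma m_0 d_\gamma^{-1}$ is negative because $m_0<0$, $\gamma>0$, and $d_\gamma>0$ (see~\eqref{eqn:scattering_definition}). If $Q_{2\gamma}$ were positive at some point of $M$, then $J$ would be negative along curves approaching that point once $r$ is small, contradicting $J>0$ in $X$; hence $Q_{2\gamma}\le0$.

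For the rigidity, suppose $Q_{2\gamma}\equiv0$. Then the expansion above reduces to $J=c_2 r^2+o(r^2)$ for a smooth function $c_2$ on $M$, so $J|_M=0$ and, since $J>0$ in $X$, necessarily $c_2\ge0$ on $M$. On the other hand, Lemma~\ref{lem:12_rhostar} gives $Q_{\phi_1}^{m_1}=0=J_{\phi_0}^{m_0}$ for the adapted smooth metric measure space, which is exactly the situation in which~\eqref{eqn:vanishing_q2gamma} holds (with $s=\tfrac{n}{2}+\gamma$, so that $2s-n-1=2\gamma-1$ and $m_1=3-2\gamma$). Since $Q_{2\gamma}\equiv0$ and $J_h=0$ force $y=r+O(r^5)$, near $M$ the metric $g$ differs from $r^2g_+=dr^2+h_r$ only by a conformal factor $1+O(r^4)$, and a Gauss-equation computation then shows that the Schouten tensor of $g$ restricts to $P_h$ along $TM$ over $M$ (its remaining components vanishing there), so $|P_g|_g^2\to|P_h|_h^2$ as $r\to0$. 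Letting $r\to0$ in~\eqref{eqn:vanishing_q2gamma}, the right-hand side tends to $-|P_h|_h^2$ (the $J^2$ term drops out), while from $J=c_2 r^2+o(r^2)$ one computes that the left-hand side tends to $\tfrac{2(1+m_1)}{2\gamma-1}\,c_2=\tfrac{4(2-\gamma)}{2\gamma-1}\,c_2$. Hence $c_2=-\tfrac{2\gamma-1}{4(2-\gamma)}|P_h|_h^2\le0$ on $M$; combined with $c_2\ge0$ this forces $c_2\equiv0$, hence $P_h\equiv0$. Since $h$ is scalar flat, $P_h=\tfrac{1}{n-2}\Ric_h$, so $\Ric_h\equiv0$ and $h$ is Ricci flat.

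I expect the rigidity step to be the main obstacle: in contrast with the fourth-order $Q$-curvature, $Q_{2\gamma}$ has no local formula, so the implication $Q_{2\gamma}\equiv0\Rightarrow\Ric_h=0$ must be recovered by reading off the second-order boundary behaviour of the scalar curvature of the adapted metric and feeding the interior identity~\eqref{eqn:vanishing_q2gamma} into it. Some care is needed with sign conventions — in particular to confirm that the constant multiplying $|P_h|_h^2$ in the resulting formula for $c_2$ is negative — which can be pinned down by testing the identity $J+\rho^{-1}\Delta\rho=\tfrac{n+1}{2}\rho^{-2}(|\nabla\rho|^2-1)$ of Lemma~\ref{lem:pe_smms_formulae} on hyperbolic space.
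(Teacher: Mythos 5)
Your first half is essentially the paper's argument: reading off the leading coefficient of $J_g = J_{y^2 g_+}$ at the boundary from~\eqref{eqn:12_J_asymptotics} (or equivalently from~\eqref{eqn:12_case_J} with $\rho_{(2)}=0$, $\tr_h h_{(2)}=-J_h=0$, $\Phi=d_\gamma^{-1}Q_{2\gamma}$) and playing this against the interior positivity from Proposition~\ref{prop:positive_scalar_curvature}; the coefficient $2\gamma m_0 d_\gamma^{-1}=-2\gamma(2\gamma-1)d_\gamma^{-1}$ agrees with~\eqref{eqn:12_J_asymptotics}, and since $d_\gamma>0$ on $(1,2)$ the sign conclusion $Q_{2\gamma}\le 0$ follows as in the paper.

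For the rigidity step your route is genuinely different from the paper's, and both are correct. The paper computes the $r^2$ coefficient of $J_g$ by a direct higher-order expansion of the scattering solution $v=\mP(\tfrac n2+\gamma)1$ (following~\cite[Prop.~4.2]{GrahamZworski2003}, attributed to Fang Wang), yielding $J_g = \tfrac{2\gamma-1}{4(\gamma-2)}\lv P\rv_h^2\,r^2+O(r^4)$. You instead extract the same coefficient by passing to the boundary in the interior identity~\eqref{eqn:vanishing_q2gamma} (a consequence of $Q_{\phi_1}^{m_1}=0$ for the adapted space): with $J_g=c_2 r^2+o(r^2)$, the weighted Laplacian term limits to $\tfrac{2(1+m_1)}{2\gamma-1}c_2=\tfrac{4(2-\gamma)}{2\gamma-1}c_2$, the $J^2$ term drops, and $\lv P_g\rv_g^2\to\lv P_h\rv_h^2$, forcing $c_2=-\tfrac{2\gamma-1}{4(2-\gamma)}\lv P_h\rv_h^2$; combined with $c_2\ge 0$ from $J_g>0$ this gives $P_h\equiv 0$. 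Your value of $c_2$ agrees with the paper's $\tfrac{2\gamma-1}{4(\gamma-2)}\lv P\rv_h^2$. What your approach buys is that it avoids the explicit high-order scattering expansion, using only structural facts (Lemma~\ref{lem:12_rhostar}, Lemma~\ref{lem:pe_smms_formulae}, and the derivation of~\eqref{eqn:vanishing_q2gamma}) already present in the paper; what it costs is that one must justify (i) that $y/r=1+O(r^4)$ with enough regularity so that $J_g$ admits a genuine second-order Taylor expansion at the boundary with no $r$ or $r^{2\gamma-2}$ terms (this does hold because $J_h=0$ kills $f_{(2)}$ and $Q_{2\gamma}\equiv 0$ kills the $r^{1+2\gamma}$ term, so the first correction to $y/r$ is at order $r^4$), and (ii) that $P_g|_M$ has vanishing normal and mixed components and tangential component $P_h$ (which follows from $h_{(2)}=-P_h$, $J_h=0$, and the fact that the conformal factor $(y/r)^2=1+O(r^4)$ does not disturb the Schouten tensor at $r=0$). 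You flag both of these points but could state them slightly more carefully; they are correct.
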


\begin{proof}

Let $g=y^2g_+$ be the adapted metric.  It follows from Lemma~\ref{lem:pe_smms_formulae} and the fact $J_{\phi_0}^{m_0}=0$ that
\begin{equation}
\begin{split}
\label{eqn:12_J_asymptotics}
J_{g} & = \frac{2\gamma-1}{2}\left(y^{-2} - \lv dy^{-1}\rv_{g_+}^2\right) \\
& = \frac{2\gamma-1}{2(\gamma-1)}J_h - \frac{2\gamma(2\gamma-1)}{d_\gamma}Q_{2\gamma}r^{2\gamma-2} + O(r^2),
\end{split}
\end{equation}
where the second equality uses the expansion~\eqref{eqn:12_rhostar_asymptotics} of $y$ near $M$.  By Proposition~\ref{prop:positive_scalar_curvature} we have that $J_{g}\geq0$.  Since $J_h\equiv0$ and $d_\gamma>0$ for $\gamma\in(1,2)$, it follows that $Q_{2\gamma}\leq0$.

Suppose now that $Q_{2\gamma}\equiv0$.  As pointed out to us by Fang Wang, following the computations outlined in~\cite[Proposition~4.2]{GrahamZworski2003} easily leads to
\[ v := \mP\left(\frac{n}{2}+\gamma\right)1 = r^{\frac{n-2\gamma}{2}} - \frac{n-2\gamma}{32(\gamma-2)}\lv P\rv_h^2r^{\frac{n-2\gamma+8}{2}} + O\left(\rho^{\frac{n-2\gamma+12}{2}}\right) \]
for $P$ the Schouten tensor of $h$.  In particular, it follows again from the conformal transformation formula for the scalar curvature that
\[ J_{g} = \frac{2\gamma-1}{4(\gamma-2)}\lv P\rv_h^2 r^2 + O\left(r^4\right) . \]
Thus, since $\gamma\in(1,2)$ and $J_{g}\geq0$, we see that $P\equiv0$; i.e.\ $h$ is Ricci flat.
\end{proof}

As stated above, the rigid case of Corollary~\ref{cor:signs_of_r_and_q} does not occur.  This is a feature of the compactness requirement in our definition of a Poincar\'e--Einstein manifold.

\begin{cor}
\label{cor:signs_of_r_and_q2}
Let $(X^{n+1},M^n,g_+)$ be a Poincar\'e--Einstein manifold and suppose there is a scalar-flat representative $h$ of the conformal boundary.  Then for all $\gamma\in(1,2)$, there exists a point $p\in M$ such that $Q_{2\gamma}(p)<0$.
\end{cor}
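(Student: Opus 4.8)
The starting point is Corollary~\ref{cor:signs_of_r_and_q}, which already gives $Q_{2\gamma}\leq 0$ and pins down the equality case: if $Q_{2\gamma}\equiv 0$ then $h$ is Ricci flat. So the plan is to assume $Q_{2\gamma}\equiv 0$ and derive a contradiction, which (since $Q_{2\gamma}\leq 0$) forces $Q_{2\gamma}(p)<0$ at some $p\in M$. The contradiction I would aim for is with Proposition~\ref{prop:positive_scalar_curvature}: when $h$ is Ricci flat I will show that the adapted defining function $y$ coincides with the geodesic defining function $r$ on a collar of $M$, so that the adapted metric $g=y^2g_+$ equals the product metric $dr^2+h$ there; being Ricci flat (as $h$ is), this metric has $R_g\equiv 0$ near $M$, contradicting the conclusion $R_g>0$ throughout $X$ of Proposition~\ref{prop:positive_scalar_curvature} (which applies since $R_h=0\geq 0$).

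To set this up, recall that a Ricci flat $h$ is Einstein, so the Fefferman--Graham normal form terminates: near $M$ one has $g_+=r^{-2}(dr^2+h)$ on a genuine collar $\mC=(0,\varepsilon)_r\times M$ (cf.\ \cite{FeffermanGraham2012}). On $\mC$ a direct computation gives $\Delta_{g_+}(r^\alpha)=\alpha(\alpha-n)r^\alpha$, so with $s=\tfrac n2+\gamma$ the function $v_0:=r^{n-s}$ solves $-\Delta_{g_+}v_0-s(n-s)v_0=0$ on $\mC$. Now let $v:=\mP(s)1$; since $R_h\geq 0$ forces $\lambda_1(-\Delta_{g_+})=\tfrac{n^2}{4}>\tfrac{n^2}{4}-\gamma^2=s(n-s)$ (Lee~\cite{Lee1995}), the maximum principle gives $v>0$, and by~\eqref{eqn:poisson_asymptotics}, \eqref{eqn:scattering_definition}, and~\eqref{eqn:q_scattering} we have $v=Fr^{n-s}+Hr^s$ on $\mC$ with $F\rv_M=1$ and $H\rv_M=\tfrac{n-2\gamma}{2d_\gamma}Q_{2\gamma}=0$.

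The heart of the matter is to promote this to the exact identity $v\equiv v_0$ on $\mC$. I would do this by separation of variables: writing $v\rv_{\mC}=\sum_k v_k(r)\phi_k$ in an $L^2(M,h)$-eigenbasis with $-\Delta_h\phi_k=\mu_k\phi_k$ and $\mu_0=0$, each $v_k$ solves the ODE $r^2v_k''-(n-1)rv_k'+(s(n-s)-\mu_kr^2)v_k=0$, which has indicial exponents $n-s$ and $s$ at $r=0$; meanwhile $v_k(r)=F_k(r)r^{n-s}+H_k(r)r^s$ with $F_k,H_k$ smooth and, for $k\geq 1$, $F_k(0)=\langle 1,\phi_k\rangle=0$ and $H_k(0)=\langle\tfrac{n-2\gamma}{2d_\gamma}Q_{2\gamma},\phi_k\rangle=0$, so comparing with the Frobenius solutions forces $v_k\equiv 0$ for all $k\geq 1$. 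Hence $v\rv_{\mC}$ is a function of $r$ alone solving the pure Euler equation $r^2v''-(n-1)rv'+s(n-s)v=0$, whose solutions are $Ar^{n-s}+Br^s$; the prescribed Dirichlet and scattering data give $A=1$, $B=0$, i.e.\ $v=r^{n-s}=v_0$ on $\mC$. (Alternatively, $v-v_0$ solves the Poisson equation and vanishes to infinite order along $M$ by the recursive structure of~\eqref{eqn:F_expansion}, whence $v\equiv v_0$ near $M$ by unique continuation at infinity.) Consequently $y=v^{\,2/(n-2\gamma)}=r$ on $\mC$, so $g=y^2g_+=r^2g_+=dr^2+h$ there; this metric is Ricci flat, hence $R_g\equiv 0$ on $\mC$, contradicting $R_g>0$ in $X$. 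Therefore $Q_{2\gamma}\not\equiv 0$, and with $Q_{2\gamma}\leq 0$ this produces a point $p\in M$ with $Q_{2\gamma}(p)<0$.

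The step that I expect to require genuine care is the passage from ``$v$ agrees with $r^{n-s}$ to infinite order at $M$'' to ``$v$ agrees with $r^{n-s}$ on a full collar'' (in the ODE analysis the only case needing a separate look is the mild resonance at $2\gamma=3$). Everything before it --- reducing to the Ricci flat case via Corollary~\ref{cor:signs_of_r_and_q}, exhibiting the exact collar form of $g_+$, and recognizing $v_0$ as a Poisson solution --- is routine, and everything after --- reading off $y=r$ and applying Proposition~\ref{prop:positive_scalar_curvature} --- is immediate.
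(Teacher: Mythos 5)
There is a genuine gap at the step where you write ``a Ricci flat $h$ is Einstein, so the Fefferman--Graham normal form terminates: near $M$ one has $g_+=r^{-2}(dr^2+h)$ on a genuine collar.'' This is false. For a Poincar\'e--Einstein filling of a given $(M^n,[h])$, the expansion $g_+=r^{-2}(dr^2+g_r)$ has its coefficients $g^{(2)},\dotsc$ determined by $h$ only up through order $r^{n-1}$; the coefficient at order $r^n$ (call it $k$, trace-free and divergence-free) is a \emph{free} parameter of the filling, not determined by $h$. When $h$ is Ricci flat the locally determined coefficients all vanish, but $k$ need not; so the best one can say a priori is $g_r=h+kr^n+o(r^n)$, exactly as in the paper's proof. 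Your claimed collar form $g_r\equiv h$ amounts to asserting $k\equiv 0$, which is the entire content one must establish. Indeed Proposition~\ref{prop:nonuniqueness} of the paper gives an explicit Poincar\'e--Einstein manifold with Ricci flat conformal infinity for which $g_r\neq h$ (the $r^n$ term there is nontrivial), so the collar-product assumption cannot be invoked.

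Because of this, the separation-of-variables argument (and the alternative ``unique continuation at infinity'' route) both fail: $v_0=r^{n-s}$ is a solution of the Poisson equation only if the metric on the collar is the exact product $r^{-2}(dr^2+h)$, which is what you were trying to prove. The paper's proof, by contrast, computes the asymptotic solution $v$ in the presence of the free tensor $k$, finds that $k$ produces a strictly negative contribution to $y^{-2}-\lv dy^{-1}\rv^2$ (hence to $R_{y^2g_+}$) at order $r^{2n-2}$ unless $k\equiv 0$, and invokes Proposition~\ref{prop:positive_scalar_curvature} to force $k\equiv 0$; only then does it use Biquard's unique continuation theorem to upgrade $g_r=h\bmod O(r^\infty)$ to $g_r=h$ on a collar, after which the contradiction follows as you describe. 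These two steps --- killing $k$ via the scalar curvature positivity, and then Biquard's theorem --- are precisely what your proposal is missing.
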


\begin{proof}

Suppose to the contrary that there is a $\gamma\in(1,2)$ such that $Q_{2\gamma}\equiv0$.  From Corollary~\ref{cor:signs_of_r_and_q}, the representative $h$ is Ricci flat.  From~\cite[Theorem~A]{ChruscielDelayLeeSkinner2005} and~\cite[Theorem~4.8]{FeffermanGraham2012} we conclude that, in a collar neighborhood $V=[0,\varepsilon)\times M$ of $M$, we can write $g_+=r^{-2}(dr^2+g_r)$ for
\[ g_r = h + kr^n + o(r^n), \]
where $k$ is a symmetric $(0,2)$-tensor on $M$ such that $\tr_h k=0$ and $\divsymb k=0$ and the higher order terms in $g_r$ are determined by $h$ and $k$.  Indeed, from the formulae~\cite[(2.4) and (2.6)]{GrahamHirachi2004}, we see that
\begin{align*}
0 & = rg^{lm}\partial_r^2g_{lm} - g^{lm}\partial_r g_{lm} - \frac{r}{2}g^{lp}g^{mq}\partial_r g_{lm}\partial_r g_{pq} \\
0 & = r\partial_r^2g_{ij} - (n-1)\partial_rg_{ij} - rg^{lm}\partial_rg_{il}\partial_rg_{jm} + \frac{r}{2}g^{lm}\partial_rg_{lm}\partial_rg_{ij} - g^{lm}(\partial_rg_{lm})g_{ij} .
\end{align*}
By differentiating up to $2n-1$ times and evaluating at $r=0$, we readily compute that in fact
\begin{equation}
\label{eqn:ricciflat_g}
g_r = h + kr^n + \left(\frac{1}{2}(k^2)_0 + \frac{3n-4}{8n(n-1)}\lv k\rv^2 h\right)r^{2n} + o(r^{2n}) ,
\end{equation}
where $(k^2)_0$ is the tracefree part of the composition $(k^2)_{ij} = k_{il}k_j^l$.

Next, we compute the formal solution to
\begin{equation}
\label{eqn:pe_rf}
\begin{cases}
-\Delta_{g_+}v - s(n-s)v = 0, \\
v = Fr^{n-s}, & F\in C^\infty(\oX), \\
F\rv_{r=0} = 1 .
\end{cases}
\end{equation}
Since $k$ is trace-free, we find that
\begin{align*}
\sqrt{\det g_+} & = r^{-n-1}\sqrt{\det h}\left( 1 - \frac{n}{16(n-1)}\lv k\rv^2 r^{2n} + o(r^{2n})\right) \\
g^{ij} & = h^{ij} - r^nk^{ij} + O(r^{2n}) .
\end{align*}
It thus follows that
\[ \Delta_{g_+} = \left(r\partial_r\right)^2 - nr\partial_r - \frac{n^2}{8(n-1)}\lv k\rv^2 r^{2n+1}\partial_r + r^2\Delta_h - r^{n+2}\delta k d + O(r^{2n+2}) . \]
In particular
\begin{align*}
\left(\Delta_{g_+}+s(n-s)\right)r^{n-s} & = -\frac{n^2(n-s)}{8(n-1)}\lv k\rv^2 r^{3n-s} + o(r^{3n-s}) \\
\left(\Delta_{g_+}+s(n-s)\right)\left(fr^{3n-s}\right) & = 2n(3n-2s)fr^{3n-s} + o(r^{3n-s}) 
\end{align*}
for any $f\in C^\infty(M)$, from which it immediately follows that the solution to~\eqref{eqn:pe_rf} is
\begin{equation}
\label{eqn:pr_rf_odd_soln}
v = \left( 1 + \frac{n(n-s)}{16(n-1)(3n-2s)}\lv k\rv^2 r^{2n} + o(r^{2n})\right) r^{n-s} .
\end{equation}

Finally, since $Q_{2\gamma}=0$, it follows that the solution to $-\Delta_+v-s(n-s)v=0$ with $s=\frac{n}{2}+\gamma$ and $r^{s-n}v\to1$ as $r\to0$ is an exact solution of~\eqref{eqn:pe_rf}.  Set $y^{n-s}=v$, so that
\begin{equation}
\label{eqn:pr_rf_odd_soln_y}
y = r + \frac{n}{16(n-1)(3n-2s)}\lv k\rv^2 r^{2n+1} + o(r^{2n+1}) .
\end{equation}
We then compute that
\begin{align*}
y^{-2} - \lv dy^{-1}\rv_{g_+}^2 & = y^{-2} - \left(r\frac{\partial y^{-1}}{\partial r}\right)^2 - r^2\lv\nabla_{g_r} y\rv_{g_r}^2 \\
& = -\frac{n^2}{4(n-1)(3n-2s)}\lv k\rv^2 r^{2n-2} + o(r^{2n-2}) .
\end{align*}
Since $3n-2s = 2(n-\gamma)>0$, this is nonpositive near the boundary and becomes negative somewhere unless $k\equiv0$.  On the other hand, the scalar curvature
\[ R_{y^2g_+} = n(2s-n-1)\left(y^{-2}-\lv dy^{-1}\rv_{g_+}^2\right) . \]
By Proposition~\ref{prop:positive_scalar_curvature}, this is nonnegative, and hence $k\equiv0$.  Thus $g_r=h\mod O(r^\infty)$ in $V$.  But $r^{-2}(dr^2\oplus h)$ is an Einstein metric in $V$, so Biquard's unique continuation theorem~\cite{Biquard2008} implies that $g_r=h$ in $V$.  Hence $v=r^{n-s}$ solves $-\Delta v-s(n-s)v=0$ in $V$, and so, by uniqueness of solutions to the Poisson equation~\cite{GrahamZworski2003}, it holds that $y=r$ in $V$.  A direct computation then shows that $R_{y^2g_+}\equiv0$ in $V$, which contradicts Proposition~\ref{prop:positive_scalar_curvature}.
\end{proof}

One can also interpret Corollary~\ref{cor:signs_of_r_and_q2} as stating that there does not exist a Poincar\'e--Einstein manifold which admits a Ricci flat representative of the conformal boundary with $Q_{2\gamma}\equiv0$ for some $\gamma\in(1,2)$; in fact, the proof works for $\gamma\in(1,n)\setminus\bN$.  The following example, which is inspired by~\cite[Example~9.118(d)]{Besse}, shows that the nonlocal assumption $Q_{2\gamma}\equiv0$ is necessary; i.e.\ there do exist Poincar\'e--Einstein manifolds with Ricci flat representatives of the conformal boundary.

\begin{prop}
\label{prop:nonuniqueness}
Let $(F^{n-1},g_F)$ be a compact Ricci flat manifold.  Define the metric $g_+$ on $\bR^2\times F^{n-1}$ by
\[ g_+ = dt^2 \oplus \left(\frac{2}{n}\cosh^{\frac{2-n}{n}}(\frac{n}{2}t)\sinh(\frac{n}{2}t)\right)^2d\theta^2 \oplus \cosh^{\frac{4}{n}}(\frac{n}{2}t)g_F , \]
where $(t,\theta)$ are polar coordinates on $\bR^2$.  Then $(\bR^2\times F^{n-1},S^1\times F^{n-1},g_+)$ is a Poincar\'e--Einstein manifold.  Moreover, the metric
\begin{equation}
\label{eqn:nonuniqueness_h}
h := \left(\frac{2}{n}\right)^2d\theta^2 \oplus g_F
\end{equation}
is a Ricci flat metric with fractional $Q$-curvature
\begin{equation}
\label{eqn:nonuniqueness_q}
Q_{2\gamma} = 2^{\frac{2(n-2)}{n}\gamma} \frac{\Gamma(\gamma)\Gamma\left(-\frac{\gamma}{n}\right)\Gamma\left(\frac{n+2\gamma}{2n}\right)}{n\Gamma(-\gamma)\Gamma\left(\frac{\gamma}{n}\right)\Gamma\left(\frac{3n-2\gamma}{2n}\right)} .
\end{equation}
for any $\gamma>0$.
\end{prop}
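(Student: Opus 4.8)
I would establish the two assertions separately. First, that $(\bR^2\times F^{n-1},S^1\times F^{n-1},g_+)$ is Poincar\'e--Einstein. I view $g_+$ as the doubly warped product $g_+ = dt^2 + a(t)^2\,d\theta^2 + b(t)^2 g_F$ over $\bR$, with the one-dimensional fiber $S^1$ and the $(n-1)$-dimensional Ricci-flat fiber $(F,g_F)$, where $a(t) = \tfrac2n\cosh^{\frac{2-n}n}(\tfrac n2 t)\sinh(\tfrac n2 t)$ and $b(t) = \cosh^{\frac2n}(\tfrac n2 t)$. Inserting these into the standard Ricci formulas for doubly warped products (cf.\ \cite{ONeill,Besse}), the condition $\Ric(g_+) = -ng_+$ reduces to the three scalar ODE identities $-\tfrac{a''}a - (n-1)\tfrac{b''}b = -\tfrac{a''}a - (n-1)\tfrac{a'b'}{ab} = -\tfrac{b''}b - \tfrac{a'b'}{ab} - (n-2)\tfrac{(b')^2}{b^2} = -n$. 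Writing $T := \tanh(\tfrac n2 t)$ one has $b'/b = T$ and $a'/a = \tfrac{2-n}2 T + \tfrac n{2T}$, and computing $a''/a$ and $b''/b$ one checks that each of the quantities $\tfrac{a''}a + (n-1)\tfrac{b''}b$, $\tfrac{a''}a + (n-1)\tfrac{a'b'}{ab}$, $\tfrac{b''}b + \tfrac{a'b'}{ab} + (n-2)\tfrac{(b')^2}{b^2}$ is, as a function of $T$, identically equal to $n$, the $T^2$-terms cancelling. Smoothness is immediate: near $t=0$ one has $a(t) = t + O(t^3)$ and $b(0) = 1$, so $g_+$ extends to a smooth complete metric on $\bR^2\times F$ (with $\theta\in\bR/2\pi\bZ$). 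For conformal compactness I use the defining function $\rho := \cosh^{-\frac2n}(\tfrac n2 t)$, so that $\rho^n = \sech^2(\tfrac n2 t)$ and a short computation gives $\rho^2 g_+ = (1-\rho^n)^{-1}d\rho^2 + \tfrac4{n^2}(1-\rho^n)\,d\theta^2 + g_F$. This extends smoothly across $\rho = 0$; adjoining $\{\rho=0\}\cong S^1\times F$ gives a compact manifold-with-boundary $\oX$ on which $\rho$ is a defining function, and $(\rho^2 g_+)|_{TM} = \tfrac4{n^2}d\theta^2 + g_F = h$ as in~\eqref{eqn:nonuniqueness_h}. This $h$ is Ricci flat, being a product of a flat circle with $(F,g_F)$.

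For the $Q$-curvature, by~\eqref{eqn:q_scattering} and~\eqref{eqn:scattering_definition} it suffices to evaluate $S(\tfrac n2+\gamma)1$; assume first $\gamma\in(0,\tfrac n2)\setminus\bN$. Since $R_h = 0$, Lee's theorem~\cite{Lee1995} gives $\lambda_1(-\Delta_{g_+}) = \tfrac{n^2}4$, so $s(n-s) = \tfrac{n^2}4 - \gamma^2 < \tfrac{n^2}4$ lies below the spectrum and $\mP(s),S(s)$ are regular at $s = \tfrac n2+\gamma$. I would then produce the rotationally invariant solution of~\eqref{eqn:poisson_equation} directly --- i.e.\ $u$ depending only on $t$ --- and identify it with $\mP(s)1$ by uniqueness. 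For $u = u(t)$, using the identity $\tfrac{a'}a + (n-1)\tfrac{b'}b = \tfrac n2(\tanh(\tfrac n2 t) + \coth(\tfrac n2 t)) = n\coth(nt)$, equation~\eqref{eqn:poisson_equation} becomes $u'' + n\coth(nt)\,u' + s(n-s)u = 0$, which the substitution $\xi = \cosh(nt)$ converts into Legendre's equation $((1-\xi^2)u')' + \ell(\ell+1)u = 0$ with $\ell(\ell+1) = -\tfrac{s(n-s)}{n^2}$, i.e.\ $\ell = \tfrac\gamma n - \tfrac12$. The solution regular at the axis $t=0$ (where $\xi = 1$) is $u = c\,P_\ell(\cosh nt)$, which is even and real-analytic in $t$ and hence smooth on $\bR^2\times F$.

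To extract $S(s)1$ I would use $\cosh(nt) = (2-\rho^n)/\rho^n$ together with the classical expansion of $P_\ell(\xi)$ as $\xi\to\infty$ --- the sum of $\tfrac{\Gamma(\ell+1/2)}{\sqrt\pi\,\Gamma(\ell+1)}(2\xi)^\ell$ and $\tfrac{\Gamma(-\ell-1/2)}{\sqrt\pi\,\Gamma(-\ell)}(2\xi)^{-\ell-1}$, each times a hypergeometric series in $\xi^{-2}$ that becomes a power series in $\rho^n$ equal to $1$ at $\rho = 0$. Since $\ell + \tfrac12 = \tfrac\gamma n$ and $n\ell = \gamma - \tfrac n2 = s - n$, this gives $P_\ell(\cosh nt) = A\rho^{n-s}(1+O(\rho^n)) + B\rho^s(1+O(\rho^n))$ with $A = 4^{\gamma/n - 1/2}\,\Gamma(\tfrac\gamma n)/(\sqrt\pi\,\Gamma(\tfrac\gamma n + \tfrac12))$ and $B = 4^{-\gamma/n - 1/2}\,\Gamma(-\tfrac\gamma n)/(\sqrt\pi\,\Gamma(\tfrac12 - \tfrac\gamma n))$. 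Hence $\mP(s)1 = A^{-1}P_\ell(\cosh nt)$, so $F,H$ in~\eqref{eqn:poisson_asymptotics} are constant and $S(s)1 = H|_M = B/A = 4^{-2\gamma/n}\,\Gamma(-\tfrac\gamma n)\Gamma(\tfrac\gamma n + \tfrac12)\big/\big(\Gamma(\tfrac\gamma n)\Gamma(\tfrac12-\tfrac\gamma n)\big)$. Substituting this and $d_\gamma = 2^{2\gamma}\Gamma(\gamma)/\Gamma(-\gamma)$ into $Q_{2\gamma} = \tfrac2{n-2\gamma}d_\gamma S(s)1$, and simplifying via $\Gamma(\tfrac32 - \tfrac\gamma n) = (\tfrac12 - \tfrac\gamma n)\Gamma(\tfrac12-\tfrac\gamma n)$, $\tfrac2{n-2\gamma} = \tfrac1{n(1/2-\gamma/n)}$, and $2^{2\gamma}4^{-2\gamma/n} = 2^{2(n-2)\gamma/n}$, produces exactly~\eqref{eqn:nonuniqueness_q}. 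The general case $\gamma > 0$ then follows by analytic continuation in $\gamma$, the right-hand side of~\eqref{eqn:nonuniqueness_q} being meromorphic with no pole there.

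The main obstacle is the verification in Part 1 that the explicit $a,b$ satisfy the three Einstein identities; this is mechanical once one sees the $T^2$-cancellations, but it is the one genuinely lengthy computation. The only other place needing care is the bookkeeping of the constants relating $\rho$, $\xi = \cosh(nt)$, and the indicial powers $\rho^{n-s},\rho^s$ in Part 2, since the factors $4^{\pm\gamma/n}$ appearing there are precisely what produce the power of $2$ in~\eqref{eqn:nonuniqueness_q}.
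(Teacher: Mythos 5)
Your proof is correct. You do take a genuinely different route from the paper. The paper works entirely in the compactified picture: it writes the weighted conformal Laplacian $L_{2,\phi_0}^{m_0}$ of $(r^2g_+,r^{m_0}\dvol,m_0-1)$ as an ODE in the geodesic defining function $r$, then reduces it to a hypergeometric ODE by the substitution $16x=r^{2n}$ (so $x=1$ is the axis $t=0$), and reads off the scattering coefficient from the explicit ${}_2F_1$ solution. You instead stay in the hyperbolic bulk, write the Poisson equation as an ODE in $t$, reduce it to Legendre's equation via $\xi=\cosh(nt)$, and extract the scattering coefficient from the classical connection formula for $P_\ell$ at $\xi=\infty$. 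These are two parametrizations of the same Gauss hypergeometric solution (indeed $\xi = (1+x)/(2\sqrt{x})$); the Legendre route has the virtue that the connection coefficients are tabulated, while the paper's route makes manifest the link to the weighted conformal Laplacian that organizes the rest of the paper. Your verification of the Einstein equation via the three ODE identities is also more explicit than the paper's terse ``It is straightforward to compute,'' and your conformal compactification via $\rho=\cosh^{-2/n}(\tfrac{n}{2}t)$ differs from the paper's geodesic $r=2^{2/n}e^{-t}$ (one has $\rho=r(1+r^n/4)^{-2/n}$). One small slip: you assert that ``$F,H$ in~\eqref{eqn:poisson_asymptotics} are constant,'' but since $\rho$ is not the geodesic defining function, $F$ and $H$ (as defined in~\eqref{eqn:poisson_asymptotics} with respect to $r$) are nonconstant power series in $r^n$. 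This does not affect the result: because $\rho/r = 1 + O(r^n)$ and, for generic $\gamma$, $2\gamma\not\in n\bZ$, the indicial exponents $n-s+kn$ and $s+kn$ do not collide, so the leading coefficients $F\rv_M$ and $H\rv_M$ agree with the coefficients of $\rho^{n-s}$ and $\rho^s$ in your expansion, and $S(s)1 = B/A$ as you claim (which indeed equals the coefficient $2^{-4\gamma/n}\Gamma(-\gamma/n)\Gamma((n+2\gamma)/(2n))/(\Gamma(\gamma/n)\Gamma((n-2\gamma)/(2n)))$ appearing in the paper's hypergeometric expansion). It would be worth making this justification explicit.
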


\begin{proof}

It is straightforward to compute that $\Ric(g_+)=-ng_+$.  Setting $r=2^{2/n}e^{-t}$, we see that
\[ g_+ = r^{-2}\left[ dr^2\oplus\left(\frac{2}{n}\left(1+\frac{r^n}{4}\right)^{\frac{2-n}{n}}\left(1-\frac{r^n}{4}\right)\right)^2d\theta^2 \oplus\left(1+\frac{r^n}{4}\right)^{\frac{4}{n}}g_F \right] , \]
from which it immediately follows that $(\bR^2\times F^{n-1},S^1\times F^{n-1},g_+)$ is Poincar\'e--Einstein and that $h$ is a representative of the conformal boundary.  Clearly $h$ is Ricci flat.

To compute the fractional $Q$-curvature of $h$, note that for $m_0=1-2\gamma$, the weighted conformal Laplacian $L_{2,\phi_0}^{m_0}$ of $(\bR^2\times F^{n-1},r^2g_1,r^{m_0}\dvol,m_0-1)$ acts on functions $U=U(r)$ as
\[ L_{2,\phi_0}^{m_0}U = -\partial_r^2U - \frac{16m_0-(m_0+2n)r^{2n}}{r(16-r^{2n})}\partial_r U + \frac{n(m_0+n-1)r^{2n-2}}{16-r^{2n}}U . \]
By making the change of variables $16x=r^{2n}$, we see that $L_{2,\phi_0}^{m_0}U=0$ if and only if
\[ x(1-x)\partial_x^2U + \left(\frac{2n+m_0-1}{2n} - \frac{m_0+4n-1}{2n}x\right)\partial_xU - \frac{m_0+n-1}{4n}U = 0 . \]
This is a standard hypergeometric ODE, and it is readily computed (cf.\ \cite[Chapter~15]{AbramowitzStegun}) that the unique solution which is regular at $x=1$ and satisfies $U(0)=1$ is
\begin{equation}
\label{eqn:soln}
\begin{split}
U(r) & = {}_2F{}_1\left(\frac{n-2\gamma}{2n},\frac{1}{2};\frac{n-\gamma}{n};\frac{r^{2n}}{16}\right) \\
& \quad + 2^{-\frac{4\gamma}{n}}\frac{\Gamma\left(-\frac{\gamma}{n}\right)\Gamma\left(\frac{n+2\gamma}{2n}\right)}{\Gamma\left(\frac{\gamma}{n}\right)\Gamma\left(\frac{n-2\gamma}{2n}\right)}r^{2\gamma} {}_2F_1\left(\frac{n+2\gamma}{2n},\frac{1}{2};\frac{n+\gamma}{n};\frac{r^{2n}}{16}\right) .
\end{split}
\end{equation}
As observed in the proof of Theorem~\ref{thm:01_case}, the function $v(r)=r^{n-s}U(r)$ satisfies~\eqref{eqn:poisson_equation} with $s=\frac{n}{2}+\gamma$ and $F\rv_{r=0}=1$.  The formula~\eqref{eqn:nonuniqueness_q} then follows immediately from~\eqref{eqn:soln} and the definition of $Q_{2\gamma}$.
\end{proof}
\section{Positivity results for the fractional GJMS operators}
\label{sec:positivity}

In this section we consider two types of positivity results for the fractional GJMS operators $P_{2\gamma}$ with $\gamma\in(0,2)$ under assumptions on their zeroth order terms $Q_{2\gamma}$ and, in the case $\gamma\in(1,2)$, the scalar curvature.  While the conclusion are conformally invariant, our assumptions on $Q_{2\gamma}$ and $R$ depend on the choice of representative of the conformal boundary.

One type of result we prove is the positivity of the first eigenvalue of $P_{2\gamma}$.  When $\gamma\in(0,1)$, this result is due to Gonz\'alez and Qing~\cite{GonzalezQing2010}.  When $\gamma\in(1,2)$ this result is Theorem~\ref{thm:positive}.  In this case, our result is similar in spirit to the corresponding result of Theorem~\ref{thm:positive} for the Paneitz operator proven by Gursky~\cite{Gursky1999} (when $n=4$), by Xu and P.\ Yang~\cite{XuYang2001} (when $n\geq6$) and by Gursky and Malchiodi~\cite{GurskyMalchiodi2014} (when $n\geq5$).  The basic idea underlying our proofs is to exhibit the energy $(P_{2\gamma}f,f)$ as the sum of $\int Q_{2\gamma}f^2$ and the energy of the corresponding weighted GJMS operator in the interior from Theorem~\ref{thm:01_case} or Theorem~\ref{thm:12_case}.  When $\gamma>1$, this involves studying the energy of the weighted Paneitz operator, and our method for proving its nonnegativity is analogous to the method used in~\cite{Gursky1999,GurskyMalchiodi2014,XuYang2001}.

The other type of result we prove is a strong maximum principle for $P_{2\gamma}$.  Upon combining our results below with a trivial observation for the conformal Laplacian and a recent result of Gursky and Malchiodi~\cite[Theorem~A]{GurskyMalchiodi2014}, we have that for any $\gamma\in(0,2]$, if $P_{2\gamma}(1)$ is \emph{semi-positive} --- i.e.\ if $P_{2\gamma}(1)\geq0$ and is not identically zero --- then for any $f\in C^\infty(M)$ such that $P_{2\gamma}f\geq0$, either $f>0$ or $f\equiv0$.

In the case of the conformal Laplacian $P_2=-\Delta+\frac{n-2}{4(n-1)}R_h$ on $(M^n,h)$ this is a simple consequence of the strong maximum principle.  For all other values $\gamma\in(0,2]\setminus\{1\}$, this result requires more work.  When $\gamma\in(0,1)$, one must combine the strong maximum principle in the interior with a Hopf Lemma on the boundary for the degenerate elliptic operator $L_{2,\phi_0}^{m_0}$ to derive the conclusion.  When $\gamma\in(1,2)$, the interior operator is fourth order (cf.\ Theorem~\ref{thm:12_case}), and so we lack even a maximum principle on the interior.  Here we overcome the difficulty using the idea of Gursky and Malchiodi~\cite{GurskyMalchiodi2014}: roughly speaking, they use conformal covariance and relations between the scalar curvature and the (fourth-order) $Q$-curvature to reduce the condition $P_4u\geq0$ to a nonnegativity condition involving a strongly elliptic second order operator.  The same idea will work to handle the cases $\gamma\in(1,2)$, though again we will also need to appeal to a Hopf Lemma on the boundary.

The Hopf Lemma we need does not seem to appear in the literature, though the following mild generalization of the Hopf Lemma proven by Gonz\'alez and Qing~\cite[Theorem~3.5]{GonzalezQing2010} suffices for our needs.  Our result is only more general in that it allows for degenerate elliptic operators with nonvanishing constant terms and it does not require one to consider the adapted metric.  We have opted to state our result for a large class of asymptotically hyperbolic metrics, but have made no attempt to find the most general statement.

\begin{prop}
\label{prop:hopf}
Let $(X^{n+1},g,\rho^m\dvol,m-1)$ be a compact smooth metric measure space with $m\in(-1,1)$ and $M:=\partial X\not=\emptyset$ such that, in a collar neighborhood $\widetilde{M}=[0,\varepsilon)\times M$ of $M$,
\[ \rho = r\left(1+\Phi r^{1-m} + \rho_{(1)} r + o(r)\right), \quad\text{and}\quad g = \left(\frac{\rho}{r}\right)^2\left(dr^2 + h + rh_{(1)} + o(r)\right) \]
for $r=d(\cdot,M)$ the distance in $X$ to the boundary $M$ and $\rho_{(1)},\Phi\in C^\infty(M)$ and both $h_{(1)}$ and the terms of order $o(r)$ symmetric $(0,2)$-tensors on $M$.  Suppose that $U\in C^\infty(X)\cap C^0(\oX)$ is a nonnegative function such that
\begin{equation}
\label{eqn:hopf_interior}
-\Delta_\phi U + \psi U \geq 0 \quad\text{in $X$}
\end{equation}
for $\psi\in C^\infty(X)\cap C^0(\oX)$.  If there is a point $q_0\in M$ and a constant $s_0>0$ sufficiently small such that
\begin{enumerate}
\item $U(q_0)=0$ and
\item there is a point $p\in M$ such that $q_0\in\Gamma_{s_0}^0\setminus\overline{\Gamma_{s_0/2}^0}$ and $U>0$ on $\partial\Gamma_{s_0/2}^0$ for
\[ \Gamma_s^0 = \left\{ x \in M \colon d(x,p) < s \right\} , \]
\end{enumerate}
then
\begin{equation}
\label{eqn:hopf_conclusion}
\lim_{\rho\to0} \rho^m\frac{\partial U}{\partial\rho}\left(q_0,\rho\right) > 0 .
\end{equation}
\end{prop}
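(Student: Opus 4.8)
The plan is to argue as in the classical Hopf Lemma, with an explicit barrier adapted to the degenerate operator $-\Delta_\phi+\psi$ and to the weighted conormal derivative $\rho^m\partial_\rho$; recall that here $\phi=-m\log\rho$, so that $\Delta_\phi=\Delta_g+m\rho^{-1}\langle\nabla\rho,\nabla\,\cdot\,\rangle_g$. \emph{Positivity and the domain.} Since $-\Delta_\phi U\geq-\psi U\geq-\psi^+U$ and $U\geq0$, the strong maximum principle for the operator $-\Delta_\phi+\psi^+$ (locally uniformly elliptic in $X$) forces $U\equiv0$ or $U>0$ in $X$; hypothesis~(2) excludes the former, so $U>0$ in $X$, and hence $U$ is bounded below by a positive constant on every compact subset of $X$ and, by hypothesis~(2) and continuity, on a full neighborhood in $\oX$ of the compact set $\partial\Gamma_{s_0/2}^0$. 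After shrinking $s_0$ so that $\tau:=d(\cdot,p)$ is smooth on $\overline{\Gamma_{s_0}^0}\setminus\{p\}$, we work in the ``annular cylinder''
\[ D:=\{(x,r)\in\widetilde M:\ s_0/2<\tau(x)<s_0,\ 0<r<\delta\},\qquad A:=\Gamma_{s_0}^0\setminus\overline{\Gamma_{s_0/2}^0}, \]
for a small $\delta\in(0,\varepsilon)$ chosen at the end; note $q_0\in A$ and that $\partial D$ meets $M$ exactly in $\overline A$.

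\emph{The barrier.} We seek a subsolution of the form
\[ W=\eta(\tau)\,r^{1-m}+\textstyle\sum_j C_j(x)\,r^{\beta_j},\qquad \eta(t):=e^{-\alpha t^2}-e^{-\alpha s_0^2}, \]
with $\alpha$ a large constant and finitely many exponents $\beta_j>1-m$. The power $r^{1-m}$ is forced: it is annihilated by the model radial operator $\partial_r^2+\tfrac{m}{r}\partial_r$, which yields both $\lim_{\rho\to0}\rho^m\partial_\rho W(q_0)=(1-m)\,\eta(\tau(q_0))>0$ and the cancellation of the most singular part of $\Delta_\phi W$. The tangential profile $\eta\circ\tau$ is the classical Hopf barrier: for $\alpha$ large,
\[ \Delta_h(\eta\circ\tau)=\bigl(4\alpha^2\tau^2+O(\alpha)\bigr)e^{-\alpha\tau^2}\ \geq\ \alpha^2 s_0^2\,e^{-\alpha\tau^2}>0\quad\text{on }A, \]
so $\eta\circ\tau$ is strongly subharmonic and positive on $A$, vanishes on $\partial\Gamma_{s_0}^0$, and attains its maximum over $\overline A$ on the inner sphere $\partial\Gamma_{s_0/2}^0$ --- which is why the hypothesis is phrased in terms of $\Gamma_{s_0/2}^0$. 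Expanding $\Delta_\phi(\eta(\tau)r^{1-m})$ in the collar using the given asymptotics of $\rho$ and $g$, the contributions of order strictly more singular than $r^{1-m}$ are explicit: the leading one is $(1-m)^2(n-1+m)\,\Phi\,\eta\,r^{-2m}$, the next is an $O(r^{-m})$ term with coefficient a fixed multiple of $\eta$ determined by $\rho_{(1)}$, $h_{(1)}$ and $\Phi$, and (when $m>0$) an $O(r^{1-2m})$ term generated by the first correction. Each such term is removed by adding a correction $C_j\,r^{\beta_j}$ with $C_j$ a fixed smooth function times $\eta$ and $\beta_j$ two orders higher, the cancellation being nondegenerate as $\beta_j>1-m>0$; since each correction reintroduces errors only at strictly less singular orders and only finitely many orders more singular than $r^{1-m}$ occur, the procedure terminates, leaving $\Delta_\phi W$ equal to $\Delta_h(\eta\circ\tau)\,r^{1-m}$, plus terms of order $r^{1-m}$ whose coefficients are $\alpha$-independent and bounded by a fixed multiple of $\eta$, plus a remainder of order $o(r^{1-m})$. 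Taking $\alpha$ large makes the Hopf term dominate the $\alpha$-independent $O(r^{1-m})$ errors and the potential term $\psi W=O(\eta\,r^{1-m})$, and then taking $\delta$ small makes the $o(r^{1-m})$ remainder negligible for $0<r<\delta$; hence $-\Delta_\phi W+\psi W\leq0$ in $D$.

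\emph{Comparison and conclusion.} Since $\eta(s_0)=0$ and each $C_j$ (being a multiple of $\eta$) vanishes on $\{\tau=s_0\}$ and at $r=0$, we have $W\equiv0$, hence $W\leq U$, on the faces $\{\tau=s_0\}$ and $\{r=0\}$ of $\partial D$; on the remaining faces $\{\tau=s_0/2\}$ and $\{r=\delta\}$, $U$ is bounded below by a positive constant by the first step while $W$ is bounded, so $\epsilon W\leq U$ on all of $\partial D$ for some $\epsilon>0$. Shrinking $\delta$ once more so that the first Dirichlet eigenvalue of $-\Delta_\phi$ on $D$ --- finite and positive since $\rho^m$ is a Muckenhoupt $A_2$ weight for $m\in(-1,1)$, whence a weighted (Hardy--)Poincar\'e inequality holds on the thin slab $D$ --- exceeds $\|\psi\|_\infty$, the weak maximum principle for $-\Delta_\phi+\psi$ on $D$ applies to $U-\epsilon W$ (which satisfies $-\Delta_\phi(U-\epsilon W)+\psi(U-\epsilon W)\geq0$ in $D$ and is $\geq0$ on $\partial D$), giving $U\geq\epsilon W$ in $D$. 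As $U-\epsilon W\geq0$ vanishes at $q_0\in M$, comparing $\rho^m\partial_\rho$ of the two functions along the inward normal at $q_0$ (using that $(U-\epsilon W)(q_0,\rho)\sim\tfrac{1}{1-m}\bigl(\lim\rho^m\partial_\rho(U-\epsilon W)\bigr)\rho^{1-m}\geq0$) yields
\[ \lim_{\rho\to0}\rho^m\frac{\partial U}{\partial\rho}(q_0,\rho)\ \geq\ \epsilon\lim_{\rho\to0}\rho^m\frac{\partial W}{\partial\rho}(q_0,\rho)\ =\ \epsilon\,(1-m)\,\eta(\tau(q_0))\ >\ 0. \]

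\emph{Main obstacle.} The crux is the barrier construction: one must check that every error term more singular than the critical order $r^{1-m}$ --- which is present precisely because $\rho$ is not a geodesic defining function and $g$ is not a product metric --- can be cancelled by finitely many explicit corrections, and, most importantly, that those corrections do not reintroduce $\alpha$-dependent errors at order $r^{1-m}$ that would overwhelm the favorable term $\Delta_h(\eta\circ\tau)\,r^{1-m}$. The required bookkeeping is elementary but must be organized by the order count indicated above so that it visibly terminates; the other ingredients (the strong maximum principle, the weak maximum principle for the degenerate operator on a thin collar, and the one-sided derivative comparison) are routine adaptations of the classical theory.
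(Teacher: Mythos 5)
Your proposal takes essentially the same route as the paper (which itself sketches and cites the Gonz\'alez--Qing argument): construct a barrier whose leading behavior is $\rho^{1-m}$ times a classical Hopf tangential factor, add finitely many lower-order corrections to absorb the error terms generated by the non-geodesic defining function and non-product metric (the paper uses a single correction at exponent $2-m$ or $2-2m$ according to whether $m<0$ or $m\geq0$, where your finite sum over $\beta_j$ lands on the same exponents), compare $U$ against a small multiple of the barrier by the maximum principle on a thin collar annulus, and read off the weighted normal derivative. The cosmetic differences --- $e^{-\alpha\tau^2}$ versus $e^{-Bs}$, working with $r$ versus $\rho$, and invoking a small-domain first-eigenvalue estimate to control the sign-changing potential rather than relying on $W$ itself being a subsolution of the full operator --- do not change the substance of the argument.
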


\begin{proof}

The proof by Gonz\'alez and Qing~\cite{GonzalezQing2010} of their version of the Hopf Lemma establishes the result with only minor modifications.  We include a sketch of the proof for the convenience of the reader, and refer to~\cite[p.\ 1550]{GonzalezQing2010} for further details of the computations.

Denote by $L$ the operator $L=-\Delta_\phi U + \psi$.  It is clear that the strong maximum principle applies in the interior, and hence $U>0$ in $X$.  It is convenient to separate the rest of the proof into two cases.

\emph{Case 1: $m<0$ or $\Phi=0$}.  Consider the test function
\[ W=\rho^m\left(\rho+A\rho^2\right)\left(e^{-Bs}-e^{-Bs_0}\right) \]
for $A,B$ large constants to be chosen later and $s=d_M(\cdot,p)$ the distance in $M$ to the point $p$.  A straightforward computation (cf.\ \cite{GonzalezQing2010}) shows that
\begin{align*}
\rho^m\Delta_\phi W & = \left((2-m)A + c\rho_{(1)} + O(\rho)\right)\left(e^{-Bs}-e^{-Bs_0}\right) \\
& \quad + \left(\rho + O(\rho^2)\right)\left(-\frac{nB}{s}+B^2+o(1)\right)e^{-Bs},
\end{align*}
where $c$ is a constant depending only on $n$ and the final $o(1)$ denotes terms which are small in $\rho$ and $s$.  It follows that, for $A,B$ sufficiently large, $L(W)\leq0$.  It follows from this, the definition of $W$, and the assumptions on $U$ that there is a constant $\varepsilon>0$ such that $L(U-\varepsilon W)\geq 0$ and $U-\varepsilon W\geq0$ on $\partial\left(\Gamma_{s_0}^0\setminus\overline{\Gamma_{s_0/2}^0}\times(0,s_0)\right)$.  Thus $U-\varepsilon W>0$ in $X$, from which we conclude that
\[ \lim_{\rho\to0} \rho^m\frac{\partial}{\partial \rho}\left(U-\varepsilon W\right)(q_0,\rho) \geq 0 . \]
A simple computation shows that
\[ \lim_{\rho\to0} \rho^m\frac{\partial W}{\partial \rho}(q_0,\rho) = (1-m)\left(e^{-Bd(p_0,q_0)}-e^{-Bs_0}\right) > 0, \]
from which the conclusion~\eqref{eqn:hopf_conclusion} immediately follows.

\emph{Case 2: $m\geq0$}.  Consider instead the test function
\[ W = \rho^{-m}\left(\rho+A\rho^{2-m}\right)\left(e^{-Bs}-e^{-Bs_0}\right) . \]
Computing again as in~\cite{GonzalezQing2010} shows that
\begin{align*}
\rho^m\Delta_\phi W & = \left((1-m)(2-m)A\rho^{-m} + c\Phi \rho^{-m} + o(\rho^{-m})\right)\left(e^{-Bs}-e^{-Bs_0}\right) \\
& \quad + \left(\rho + o(\rho)\right)\left(-\frac{nB}{s}+B^2+o(1)\right)e^{-Bs}.
\end{align*}
As in the previous case, we may choose constants $A$ and $B$ sufficiently large and constants $s_0,\varepsilon>0$ sufficiently small so that $L(U-\varepsilon W)\geq0$ and $U-\varepsilon W\geq0$ on $\partial\left(\Gamma_{s_0}^0\setminus\overline{\Gamma_{s_0/2}^0}\times(0,s_0\right)$.  From this point the conclusion follows exactly as in the previous paragraph.
\end{proof}

\subsection{The simple case $\gamma\in(0,1)$}\label{subsec:positivity/01}  In order to motivate our approach, let us first rederive using smooth metric measure spaces a result of Gonz\'alez and Qing~\cite{GonzalezQing2010} on sufficient conditions for the positivity of the first eigenvalue of $P_{2\gamma}$.  To that end, note that an immediate corollary of Theorem~\ref{thm:01_case} and Lemma~\ref{lem:01_rhostar} is the following extension formula for $P_{2\gamma}$.

\begin{lem}
\label{lem:01_rhostar_extension}
Let $(X^{n+1},M^n,g_+)$ be a Poincar\'e--Einstein manifold and fix a representative $h$ of the conformal boundary.  Let $\gamma\in(0,1)$ be such that $\lambda_1(-\Delta_{g_+})>\frac{n^2}{4}-\gamma^2$.  Set $m_0=1-2\gamma$ and let $(\oX^{n+1},g,y^{m_0}\dvol,m_0-1)$ be the adapted smooth metric measure space~\eqref{eqn:01_rhostar_smms}.  Then for each $f\in C^\infty(M)$, the solution $U$ to the boundary value problem
\begin{equation}
\label{eqn:01_extension_y}
\begin{cases}
-\Delta_{\phi_0} U = 0, &\quad\text{in $X^{n+1}$}, \\
U=f, &\quad\text{on $M$},
\end{cases}
\end{equation}
is such that
\begin{equation}
\label{eqn:01_extension_defn_y}
P_{2\gamma}f = \frac{n-2\gamma}{2}Q_{2\gamma}f + \frac{d_\gamma}{2\gamma}\lim_{y\to0}y^{m_0}\frac{\partial U}{\partial y} .
\end{equation}
\end{lem}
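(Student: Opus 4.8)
The plan is to obtain this lemma as a direct corollary of Theorem~\ref{thm:01_case} and Lemma~\ref{lem:01_rhostar}, specializing the general extension theorem to the adapted defining function. First I would invoke Lemma~\ref{lem:01_rhostar}, whose hypotheses $\gamma\in(0,1)$ and $\lambda_1(-\Delta_{g_+})>\frac{n^2}{4}-\gamma^2$ are precisely those assumed here, to produce the adapted defining function $y$ with asymptotics $y = r + d_\gamma^{-1}Q_{2\gamma}r^{1+2\gamma} + O(r^3)$ such that the adapted smooth metric measure space~\eqref{eqn:01_rhostar_smms} satisfies $J_{\phi_0}^{m_0}=0$. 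Since $\gamma<1$ we have $1+2\gamma<3$, so this expansion is of the form~\eqref{eqn:01_case_rho} with $\Phi = d_\gamma^{-1}Q_{2\gamma}$ and $o(r^{1+2\gamma})$ error; in particular Theorem~\ref{thm:01_case} applies with $\rho = y$.

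Next I would unpack the two conclusions of Theorem~\ref{thm:01_case} in this case. The condition $J_{\phi_0}^{m_0}=0$ together with the local formula~\eqref{eqn:weighted_gjms12} shows $L_{2,\phi_0}^{m_0} = -\Delta_{\phi_0}$ for the adapted smooth metric measure space, so the boundary value problem~\eqref{eqn:01_case} coincides with~\eqref{eqn:01_extension_y} and the solution $U$ (unique by Theorem~\ref{thm:01_case}) is the same. Substituting $\Phi = d_\gamma^{-1}Q_{2\gamma}$ into the identity~\eqref{eqn:01_extension_defn} turns the term $\frac{n-2\gamma}{2}d_\gamma\Phi f$ into $\frac{n-2\gamma}{2}Q_{2\gamma}f$, and rearranging yields exactly~\eqref{eqn:01_extension_defn_y}.

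I do not expect any genuine obstacle: the content is entirely contained in the two cited results, and the only point worth a line of care is checking that the $O(r^3)$ error in the expansion of $y$ is subsumed by the $o(r^{1+2\gamma})$ error permitted in~\eqref{eqn:01_case_rho}, which is where $\gamma<1$ enters. I would also remark that the spectral hypothesis $\lambda_1(-\Delta_{g_+})>\frac{n^2}{4}-\gamma^2$ is exactly what Lemma~\ref{lem:01_rhostar} needs for $y$ to be a positive defining function on all of $X$, so that the interior equation $-\Delta_{\phi_0}U = 0$ and its solution are globally well defined.
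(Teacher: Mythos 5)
Your proposal is correct and is exactly the route the paper intends: the paper presents this lemma as an ``immediate corollary of Theorem~\ref{thm:01_case} and Lemma~\ref{lem:01_rhostar},'' and you have supplied precisely the details — using $J_{\phi_0}^{m_0}=0$ to reduce $L_{2,\phi_0}^{m_0}$ to $-\Delta_{\phi_0}$, substituting $\Phi = d_\gamma^{-1}Q_{2\gamma}$ into \eqref{eqn:01_extension_defn}, and checking that $O(r^3)=o(r^{1+2\gamma})$ because $\gamma<1$.
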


The conclusion~\eqref{eqn:01_extension_defn_y} immediately leads to an energy identity relating $P_{2\gamma}$ and the weighted conformal Laplacian of the interior which, by the properties of the adapted smooth metric measure space, yields the following result from~\cite{GonzalezQing2010}.  Note that the assumption $\lambda_1(-\Delta_{g_+})>\frac{n^2}{4}-\gamma^2$ is required because of our use of the adapted smooth metric measure space; it does not seem to be known whether this assumption is necessary.

\begin{thm}[Gonz\'alez--Qing~\cite{GonzalezQing2010}]
\label{thm:gonzalez_qing}
Let $(X^{n+1},M^n,g_+)$ be a Poincar\'e--Einstein manifold and let $\gamma\in(0,1)$.  Suppose that there exists a representative $h$ of the conformal boundary such that $Q_{2\gamma}\geq0$.  Suppose additionally that $\lambda_1(-\Delta_{g_+})>\frac{n^2}{4}-\gamma^2$.  Then $P_{2\gamma}\geq0$.  Moreover, $\ker P_{2\gamma}\not=\{0\}$ if and only if $Q_{2\gamma}\equiv0$, in which case $\ker P_{2\gamma}=\bR$ is the space of constant functions.
\end{thm}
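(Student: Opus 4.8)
The plan is to follow the strategy described at the start of Section~\ref{sec:positivity}: use the extension formula of Lemma~\ref{lem:01_rhostar_extension} to write the energy $(P_{2\gamma}f,f)$ as the sum of $\tfrac{n-2\gamma}{2}\int_M Q_{2\gamma}f^2\,\dvol_h$ and the Dirichlet energy of the harmonic extension of $f$ in the adapted smooth metric measure space, and then read off both the positivity of $P_{2\gamma}$ and the description of its kernel from the signs of these two summands.

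First I would use the hypothesis $\lambda_1(-\Delta_{g_+})>\tfrac{n^2}{4}-\gamma^2$ to produce, via Lemma~\ref{lem:01_rhostar}, the adapted smooth metric measure space $(\oX^{n+1},g=y^2g_+,y^{m_0}\dvol,m_0-1)$ with $m_0=1-2\gamma$, so that $J_{\phi_0}^{m_0}=0$ and, in the notation of Theorem~\ref{thm:01_case}, the function $\Phi$ equals $d_\gamma^{-1}Q_{2\gamma}$. Given $f\in C^\infty(M)$, let $U$ be the solution of~\eqref{eqn:01_extension_y}. Its leading asymptotics at $M$, namely $U=f+\bigl(d_\gamma^{-1}P_{2\gamma}f-\tfrac{n-2\gamma}{2}\Phi f\bigr)y^{2\gamma}+o(y^{2\gamma})$ with $2\gamma\in(0,2)$, guarantee that $y^{m_0}\lv\nabla U\rv^2$ is integrable up to $M$, so the integration by parts underlying Corollary~\ref{cor:01_inequality} is legitimate; specializing that identity to $J_{\phi_0}^{m_0}=0$ and $d_\gamma\Phi=Q_{2\gamma}$ gives
\[ \int_M f\,P_{2\gamma}f\,\dvol_h = \frac{n-2\gamma}{2}\int_M Q_{2\gamma}f^2\,\dvol_h - \frac{d_\gamma}{2\gamma}\int_X \lv\nabla U\rv^2\,y^{m_0}\dvol_g . \]
Since $\gamma\in(0,1)$ forces $d_\gamma<0$ by~\eqref{eqn:scattering_definition}, one has $-d_\gamma/(2\gamma)>0$, while $2\gamma<n$; hence both summands on the right are nonnegative whenever $Q_{2\gamma}\ge0$, which proves $P_{2\gamma}\ge0$ and shows that $(P_{2\gamma}f,f)=0$ forces $\int_X\lv\nabla U\rv^2 y^{m_0}\dvol_g=0$ and $\int_M Q_{2\gamma}f^2\,\dvol_h=0$.

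For the characterization of the kernel, suppose $f\in\ker P_{2\gamma}$, so $(P_{2\gamma}f,f)=0$. Because $y^{m_0}>0$ on the connected manifold $X$, vanishing of the Dirichlet integral forces $U$ to be constant, say $U\equiv c$, whence $f=U\rv_M\equiv c$; thus $\ker P_{2\gamma}\subseteq\bR$. By linearity and the definition~\eqref{eqn:q_scattering} of $Q_{2\gamma}$ one has $P_{2\gamma}(c)=\tfrac{n-2\gamma}{2}c\,Q_{2\gamma}$, so a constant $c$ lies in $\ker P_{2\gamma}$ exactly when $c=0$ or $Q_{2\gamma}\equiv0$. Therefore, if $Q_{2\gamma}\not\equiv0$ then $\ker P_{2\gamma}=\{0\}$, and if $Q_{2\gamma}\equiv0$ then $P_{2\gamma}(1)=0$ gives $\bR\subseteq\ker P_{2\gamma}$ and hence $\ker P_{2\gamma}=\bR$, the space of constants.

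The one step needing genuine care is the weighted integration by parts behind the displayed energy identity: one must verify that $\int_X\lv\nabla U\rv^2 y^{m_0}\dvol_g$ converges and that the boundary flux it produces is exactly $\tfrac{d_\gamma}{2\gamma}\lim_{y\to0}y^{m_0}\partial_y U$. Both follow from the precise expansion of $U$ at $M$ supplied by Theorem~\ref{thm:01_case} for the weight $m_0=1-2\gamma\in(-1,1)$, together with the defining property $J_{\phi_0}^{m_0}=0$ of the adapted metric, which kills the zeroth-order term in the interior; everything else is bookkeeping with the sign of $d_\gamma$ and the connectedness of $X$.
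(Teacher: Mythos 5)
Your proof is correct and follows essentially the same route as the paper: pass to the adapted smooth metric measure space from Lemma~\ref{lem:01_rhostar}, specialize the energy identity of Corollary~\ref{cor:01_inequality} (equivalently Lemma~\ref{lem:01_rhostar_extension}) using $J_{\phi_0}^{m_0}=0$ and $d_\gamma\Phi=Q_{2\gamma}$, and read off nonnegativity and the kernel from the signs of the two summands and the sign $d_\gamma<0$. Your kernel discussion is in fact slightly more careful than the paper's one-line remark, but it is not a different argument.
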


\begin{proof}

Let $(X^{n+1},g,y^{m_0}\dvol,m_0-1)$ be the adapted smooth metric measure space.  Given $f\in C^\infty(M)$ not identically zero, let $U$ be the solution to~\eqref{eqn:01_extension_y}.  It follows from~\eqref{eqn:01_extension_defn_y} and integration by parts that
\[ \int_M P_{2\gamma}f\,f\dvol_h = \frac{n-2\gamma}{2}\int_M Q_{2\gamma}f^2\dvol_h - \frac{d_\gamma}{2\gamma}\int_X \lv\nabla U\rv^2\,y^{m_0}\dvol_{g} . \]
Since $\gamma\in(0,1)$, we see from~\eqref{eqn:scattering_definition} that $d_\gamma<0$.  Hence $\int P_{2\gamma}f\,f\geq0$, and moreover equality holds if and only if $U$ (and hence $f$) is constant and $Q_{2\gamma}\equiv0$, as desired.
\end{proof}

Using Proposition~\ref{prop:hopf} (the Hopf Lemma in~\cite{GonzalezQing2010} would also suffice), we can also prove the following strong maximum principle for the fractional GJMS operators $P_{2\gamma}$ with $\gamma\in(0,1)$ and $Q_{2\gamma}$ semi-positive.  Since we use the adapted smooth metric measure space, we again need to make a spectral assumption on $-\Delta_{g_+}$.

\begin{thm}
\label{thm:01_maximum}
Let $(X^{n+1},M^n,g_+)$ be a Poincar\'e--Einstein manifold.  Let $\gamma\in(0,1)$ be such that $\lambda_1(-\Delta_{g_+})>\frac{n^2}{4}-\gamma^2$ and suppose that there is a representative $h$ of the conformal boundary with $Q_{2\gamma}$ semi-positive.  Then for any $f\in C^\infty(M)$ such that $P_{2\gamma}f\geq0$, either $f>0$ or $f\equiv0$.
\end{thm}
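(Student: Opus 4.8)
The plan is to combine the extension identity of Lemma~\ref{lem:01_rhostar_extension} with the Hopf Lemma of Proposition~\ref{prop:hopf} on the adapted smooth metric measure space, in the spirit of Gonz\'alez and Qing~\cite{GonzalezQing2010}. Throughout I would use the standing assumption $\gamma<\frac n2$, so that $\frac{n-2\gamma}{2}>0$, together with the fact that $d_\gamma<0$ for $\gamma\in(0,1)$ coming from~\eqref{eqn:scattering_definition}. The spectral hypothesis $\lambda_1(-\Delta_{g_+})>\frac{n^2}{4}-\gamma^2$ guarantees, via Lemma~\ref{lem:01_rhostar}, that the adapted defining function $y=r+d_\gamma^{-1}Q_{2\gamma}r^{1+2\gamma}+O(r^3)$ exists and that the adapted smooth metric measure space $(\oX^{n+1},g,y^{m_0}\dvol,m_0-1)$ has $J_{\phi_0}^{m_0}=0$, where $m_0=1-2\gamma\in(-1,1)$. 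Given $f\in C^\infty(M)$ with $f\not\equiv0$ and $P_{2\gamma}f\geq0$, I would let $U$ be the solution of~\eqref{eqn:01_extension_y}; since $J_{\phi_0}^{m_0}=0$ this is exactly $-\Delta_{\phi_0}U=0$ in $X$, and by~\eqref{eqn:01_expansion} one has $U=f+O(y^{2\gamma})$ near $M$, so $U\in C^\infty(X)\cap C^0(\oX)$ and the identity~\eqref{eqn:01_extension_defn_y} applies. Arguing by contradiction, I would suppose $\min_M f\leq0$ and fix a minimizer $q_0\in M$.

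First I would dispose of the case that $f$ is constant, say $f\equiv c$. Since $f\not\equiv0$ and $\min_M f\leq0$ we have $c<0$; the extension is then $U\equiv c$, so $\lim_{y\to0}y^{m_0}\frac{\partial U}{\partial y}=0$ and~\eqref{eqn:01_extension_defn_y} reduces to $P_{2\gamma}f=\frac{n-2\gamma}{2}\,c\,Q_{2\gamma}$. As $Q_{2\gamma}$ is semi-positive and $\frac{n-2\gamma}{2}c<0$, this function is $\leq0$ and strictly negative wherever $Q_{2\gamma}>0$, contradicting $P_{2\gamma}f\geq0$.

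Now suppose $f$ is non-constant. Since $-\Delta_{\phi_0}$ is a second-order elliptic operator without zeroth-order term, the maximum principle gives $U\geq f(q_0)$ on $\oX$; hence $V:=U-f(q_0)$ is a nonnegative $\phi_0$-harmonic function on $X$ with $V(q_0)=0$, and since $U|_M=f$ is non-constant, $V\not\equiv0$, so the strong maximum principle forces $V>0$ in $X$. I would then check the hypotheses of Proposition~\ref{prop:hopf} with $\rho=y$, $m=m_0$, and $\psi\equiv0$: the adapted defining function has the form $y=r\bigl(1+\Phi r^{1-m_0}+o(r)\bigr)$ with $\Phi=d_\gamma^{-1}Q_{2\gamma}$, and since $r$ is the geodesic defining function we may write $g=y^2g_+=(y/r)^2\bigl(dr^2+h+o(r)\bigr)$, so the structural assumptions hold; the interior inequality $-\Delta_{\phi_0}V\geq0$ is immediate. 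To arrange the geodesic-ball condition~(2) I would choose $p$ in the nonempty open set $\{f>f(q_0)\}\subset M$, set $\rho_0:=d_M(p,\{f=f(q_0)\})>0$, and take $s_0$ slightly larger than $\rho_0$; then some minimizer of $f$ — which I relabel $q_0$ — lies in $\Gamma_{s_0}^0\setminus\overline{\Gamma_{s_0/2}^0}$, while $V=f-f(q_0)>0$ on the sphere $\partial\Gamma_{s_0/2}^0$, which is contained in $\{f>f(q_0)\}$. Proposition~\ref{prop:hopf} then yields $\lim_{y\to0}y^{m_0}\frac{\partial U}{\partial y}(q_0)=\lim_{y\to0}y^{m_0}\frac{\partial V}{\partial y}(q_0)>0$. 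Substituting into~\eqref{eqn:01_extension_defn_y} at $q_0$ and using $f(q_0)\leq0$, $Q_{2\gamma}(q_0)\geq0$, $\frac{n-2\gamma}{2}>0$ and $\frac{d_\gamma}{2\gamma}<0$, we obtain $P_{2\gamma}f(q_0)<0$, contradicting $P_{2\gamma}f\geq0$. Hence $\min_M f>0$, i.e.\ $f>0$.

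I expect the main technical point to be the clean application of Proposition~\ref{prop:hopf}: one must verify that the adapted metric $g$ and defining function $y$ genuinely satisfy its structural hypotheses (which relies on $r$ being geodesic and on the asymptotics from Lemma~\ref{lem:01_rhostar}), and one must build the geodesic ball in $M$ so that conditions~(1) and~(2) hold at a minimizer of $f$; the rest is the strong maximum principle for $-\Delta_{\phi_0}$ in the interior together with the sign bookkeeping in~\eqref{eqn:01_extension_defn_y}, where it is essential that $d_\gamma<0$.
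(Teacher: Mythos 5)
Your argument is correct and follows the same route as the paper: pass to the adapted smooth metric measure space via Lemma~\ref{lem:01_rhostar}, solve $-\Delta_{\phi_0}U=0$ with $U|_M=f$, dispose of the constant case, and combine the extension formula~\eqref{eqn:01_extension_defn_y} with the strong maximum principle in $X$ and the Hopf Lemma of Proposition~\ref{prop:hopf} at a boundary minimizer, using the sign $d_\gamma<0$ and the nonnegativity of $Q_{2\gamma}$ and $\frac{n-2\gamma}{2}$.

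One genuine improvement in your write-up: you explicitly introduce the shift $V=U-f(q_0)\geq0$ before invoking the Hopf Lemma. The paper instead derives $\min U\geq 0$ directly from the two-sided inequality $0\leq \frac{n-2\gamma}{2}Q_{2\gamma}(p)\min U + \frac{d_\gamma}{2\gamma}\lim y^{m_0}\partial_y U(p) \leq \frac{n-2\gamma}{2}Q_{2\gamma}(p)\min U$ and then rules out $\min U=0$ by Hopf; read literally, the step ``thus $\min U\geq0$'' is not airtight if $Q_{2\gamma}(p)=0$ at the minimizer $p$ (which is allowed when $Q_{2\gamma}$ is only semi-positive). Your version, which applies Hopf unconditionally to the nonnegative function $V$ and then reads off a strict negative sign in $P_{2\gamma}f(q_0)$, sidesteps this. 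Two small points worth tightening in your prose: (i) Proposition~\ref{prop:hopf} requires $s_0$ ``sufficiently small,'' so you should take $p\in\{f>\min f\}$ close to the set $\{f=\min f\}$ so that $\rho_0$, and hence $s_0$, can be made small; (ii) you use uniqueness of the solution of~\eqref{eqn:01_extension_y} to identify the constant extension, which is justified by the equivalence with the Poisson problem in Theorem~\ref{thm:01_case}, but it is worth saying so.
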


\begin{proof}

Suppose $f\not\equiv0$ and consider the adapted smooth metric measure space $(X^{n+1},g,y^{m_0}\dvol,m_0-1)$.  Let $U\in C^\infty(X)$ be the solution to the boundary value problem~\eqref{eqn:01_extension_y}.  By the strong maximum principle, either $U$ is constant or the minimum of $U$ occurs on $M$.  If $U$ is constant, then $U\equiv f$, and hence~\eqref{eqn:01_extension_defn_y} implies that $f>0$.  On the other hand, if $U$ is not constant, it follows from~\eqref{eqn:01_extension_defn_y} that at a point $p\in M$ which minimizes $U$,
\begin{equation}
\label{eqn:01_maximum_min}
0 \leq \frac{n-2\gamma}{2}Q_{2\gamma}\min U + \frac{d_\gamma}{2\gamma}\lim_{y\to0}y^{m_0}\frac{\partial U}{\partial y}(p,y) \leq \frac{n-2\gamma}{2}Q_{2\gamma}\min U
\end{equation}
(recall from~\eqref{eqn:scattering_definition} that $d_\gamma<0$ for $\gamma\in(0,1)$).  Thus $\min U\geq0$.  Moreover, if $\min U=0$, then Proposition~\ref{prop:hopf} implies that the second inequality in~\eqref{eqn:01_maximum_min} is strict, a contradiction.  Hence $\min f=\min U>0$, as desired.
\end{proof}

\subsection{The case $\gamma\in(1,2)$.}  We now turn to the proofs of Theorem~\ref{thm:positive} and Theorem~\ref{thm:maximum}, which are the analogues for $\gamma\in(1,2)$ of Theorem~\ref{thm:gonzalez_qing} and Theorem~\ref{thm:01_maximum}, respectively.  In both cases, the ideas of the proof are similar in spirit to their counterparts from Subsection~\ref{subsec:positivity/01}, though they are technically more difficult due to the fact that when $\gamma\in(1,2)$, the fractional GJMS operator $P_{2\gamma}$ is the boundary operator of a fourth order operator, the weighted Paneitz operator.  A key fact used in overcoming these difficulties is the following result establishing a relationship between the scalar curvature and the fractional $Q$-curvature $Q_{2\gamma}$ with $\gamma\in(1,2)$ of a representative for the conformal boundary (cf.\ \cite[Lemma~2.1]{GurskyMalchiodi2014}).  It is here that we require our Hopf Lemma.

\begin{lem}
\label{lem:12_maximum}
Let $(X^{n+1},M^n,g_+)$ be a Poincar\'e--Einstein manifold.  Let $\gamma\in(1,2)$ and suppose that there is a representative $h$ for the conformal boundary with $R_h$ and $Q_{2\gamma}$ nonnegative, at least one of which is semi-positive.  Then $R_h>0$.
\end{lem}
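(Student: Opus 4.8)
The plan is to analyze the Schouten scalar $J_g$ of the \emph{adapted metric} $g=y^2g_+$ attached to $\gamma$ and $h$ by Lemma~\ref{lem:12_rhostar}, and to combine the interior positivity furnished by Proposition~\ref{prop:positive_scalar_curvature} with the Hopf Lemma of Proposition~\ref{prop:hopf}. (Since $R_h\geq0$, Lee's theorem~\cite{Lee1995} gives $\lambda_1(-\Delta_{g_+})=\frac{n^2}{4}>\frac{n^2}{4}-\gamma^2$, so the adapted smooth metric measure space is defined.) As a preliminary reduction I would first observe that $R_h\not\equiv0$: if $R_h\equiv0$, then Corollary~\ref{cor:signs_of_r_and_q} gives $Q_{2\gamma}\leq0$, and together with the hypothesis $Q_{2\gamma}\geq0$ this forces $Q_{2\gamma}\equiv0$, so that neither $R_h$ nor $Q_{2\gamma}$ is semi-positive, contrary to assumption. (We may also assume $M$ is connected, treating components separately otherwise.)

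Next I would record the two properties of $J_g$ that drive the argument. First, Proposition~\ref{prop:positive_scalar_curvature} gives $J_g>0$ throughout $X$, while by the expansion~\eqref{eqn:12_J_asymptotics} the function $J_g$ extends continuously to $\oX$ with $J_g|_M=\frac{2\gamma-1}{2(\gamma-1)}J_h\geq0$ (the term in $r^{2\gamma-2}$ vanishes as $r\to0$ since $\gamma>1$); hence $J_g\in C^\infty(X)\cap C^0(\oX)$ is nonnegative on $\oX$. Second, the defining property $Q_{\phi_1}^{m_1}=0$ of the adapted smooth metric measure space yields, exactly as in the proof of Proposition~\ref{prop:positive_scalar_curvature} (see~\eqref{eqn:vanishing_q2gamma}), the identity $\Delta_{\phi_1}J_g=(2\gamma-1)\bigl(-\lv P_g\rv_g^2+\tfrac{n+1}{(2\gamma-1)^2}J_g^2\bigr)$, which rearranges to
\[
-\Delta_{\phi_1}J_g+\frac{n+1}{2\gamma-1}\,J_g\cdot J_g=(2\gamma-1)\lv P_g\rv_g^2\geq0,
\]
so $U:=J_g$ satisfies $-\Delta_{\phi_1}U+\psi U\geq0$ in $X$ with $\psi:=\frac{n+1}{2\gamma-1}J_g\in C^\infty(X)\cap C^0(\oX)$.

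Now suppose for contradiction that $R_h$ is not everywhere positive. By the reduction above, $\Omega:=\{R_h>0\}$ is a nonempty proper open subset of $M$, so there is a point $q_0\in\partial\Omega$ admitting an interior tangent ball $B\subset\Omega$ (the standard configuration used to pass from Hopf's Lemma to the strong maximum principle); taking $p$ to be the center of $B$ and $s_0$ between $\mathrm{rad}(B)$ and $2\,\mathrm{rad}(B)$, the hypotheses of Proposition~\ref{prop:hopf} are met for $U=J_g$: the collar forms of $y$ and $g$ are of the required type (as follows from~\eqref{eqn:12_rhostar_asymptotics}) with $m_1=3-2\gamma\in(-1,1)$, one has $U\geq0$ on $\oX$ and $U(q_0)=\frac{2\gamma-1}{2(\gamma-1)}J_h(q_0)=0$, and condition~(2) holds since $J_g=\frac{2\gamma-1}{2(\gamma-1)}J_h>0$ on $\partial\Gamma_{s_0/2}^0\subset B$. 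Thus $\lim_{y\to0}y^{m_1}\frac{\partial J_g}{\partial y}(q_0,\cdot)>0$. On the other hand, differentiating~\eqref{eqn:12_J_asymptotics} and using $y=r+O(r^3)$ gives
\[
\lim_{y\to0}y^{m_1}\frac{\partial J_g}{\partial y}(q_0,\cdot)=-\,\frac{2\gamma(2\gamma-1)(2\gamma-2)}{d_\gamma}\,Q_{2\gamma}(q_0)\leq0,
\]
because for $\gamma\in(1,2)$ the numbers $2\gamma$, $2\gamma-1$, $2\gamma-2$ and $d_\gamma$ are all positive and $Q_{2\gamma}\geq0$. This contradiction forces $\Omega=M$, i.e.\ $R_h>0$.

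The step I expect to be the main obstacle is the boundary bookkeeping: confirming that the adapted defining function and adapted metric fit the collar normal form required by Proposition~\ref{prop:hopf} (here it helps that that proposition was stated for exactly this class of degenerate problems), and extracting the weighted normal derivative $\lim_{y\to0}y^{m_1}\partial_yJ_g$ from the expansion~\eqref{eqn:12_J_asymptotics} with the correct constant and sign so that it is governed precisely by $Q_{2\gamma}$. The interior differential inequality and the interior positivity of $J_g$ are essentially inherited from the factorization behind the adapted smooth metric measure space and from Proposition~\ref{prop:positive_scalar_curvature}, so the only genuinely new analytic ingredient is the Hopf Lemma itself.
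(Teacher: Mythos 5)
Your proof is correct and follows essentially the same strategy as the paper's: apply Proposition~\ref{prop:positive_scalar_curvature} to get $J_g>0$ in $X$ for the adapted metric, use the vanishing of $Q_{\phi_1}^{m_1}$ (via~\eqref{eqn:vanishing_q2gamma}) to obtain a differential inequality of the form~\eqref{eqn:hopf_interior} for $U=J_g$, and invoke the Hopf Lemma (Proposition~\ref{prop:hopf}) at a boundary zero of $R_h$ to contradict the sign of the weighted Neumann data $\lim_{y\to0}y^{m_1}\partial_y J_g=-\frac{4\gamma(2\gamma-1)(\gamma-1)}{d_\gamma}Q_{2\gamma}\leq0$. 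Your only minor departures are expository refinements: you exclude $R_h\equiv0$ by citing Corollary~\ref{cor:signs_of_r_and_q} rather than arguing directly from the Neumann asymptotics, and you spell out the interior tangent-ball argument needed to locate a point $q_0\in\partial\{R_h>0\}$ satisfying hypothesis~(2) of Proposition~\ref{prop:hopf}, a step the paper leaves implicit.
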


\begin{proof}

Let $y$ be the adapted defining function.  In particular, the vanishing of the weighted $Q$-curvature of $(X^{n+1},y^2g_+,y^{m_1}\dvol,m_1-1)$ for $m_1=3-2\gamma$ and the asymptotics~\eqref{eqn:12_J_asymptotics} of $J=\frac{1}{2n}R_{\rho^2g_+}$ imply that (cf.\ \eqref{eqn:qphi_zero})
\begin{align}
\label{eqn:12_maximum_r_interior} 0 & \leq -\frac{1}{2\gamma-1}\Delta_\phi J + \frac{(n+2-2\gamma)(n+2\gamma)}{(1-2\gamma)^2(n+1)}J^2,& \quad\text{in $X$}, \\
\label{eqn:12_maximum_r_boundary} J & = \frac{2\gamma-1}{4(n-1)(\gamma-1)}R_h,& \quad\text{on $M$}, \\
\label{eqn:12_maximum_r_neumann} \lim_{y\to0}y^{m_1}\frac{\partial J}{\partial y} & = -\frac{4\gamma(2\gamma-1)(\gamma-1)}{d_\gamma}Q_{2\gamma} .
\end{align}
By Proposition~\ref{prop:positive_scalar_curvature}, we know that $J>0$ in $X$.  Recall from~\eqref{eqn:scattering_definition} that $d_\gamma>0$ for $\gamma\in(1,2)$.  If $R_h\equiv0$, combining~\eqref{eqn:12_maximum_r_neumann} and the assumption that $Q_{2\gamma}$ is semi-positive shows that there are points $p\in X$ such that $J(p)<0$, a contraction.  It follows that if there is a point $q\in M$ such that $R_h(q)=0$, then we can apply Proposition~\ref{prop:hopf} to $J$ and conclude that
\[ \lim_{y\to0}y^{m_1}\frac{\partial J}{\partial y}(q,y) > 0, \]
which again contradicts~\eqref{eqn:12_maximum_r_neumann}.  Hence $R_h>0$ on $M$, as desired.
\end{proof}

We first move towards the proof of Theorem~\ref{thm:positive}, which establishes a necessary condition for the first eigenvalue of $P_{2\gamma}$ to be positive.  One key ingredient is the following lemma, which is an immediate consequence of Theorem~\ref{thm:12_case} and Lemma~\ref{lem:12_rhostar}.

\begin{lem}
\label{lem:12_rhostar_extension}
Let $(X^{n+1},M^n,g_+)$ be a Poincar\'e--Einstein manifold and fix a representative $h$ of the conformal boundary.  Let $\gamma\in(1,2)$ be such that $\lambda_1(-\Delta_{g_+})>\frac{n^2}{4}-\gamma^2$.  Set $m_1=3-2\gamma$ and let $(X^{n+1},g,y^{m_1}\dvol,m_1-1)$ be the adapted smooth metric measure space~\eqref{eqn:12_rhostar_smms}.  Then for each $f\in C^\infty(M)$, the solution $U$ to the boundary value problem
\begin{equation}
\label{eqn:12_extension_y}
\begin{cases}
L_{4,\phi_1}^{m_1}U = 0, &\quad\text{in $X^{n+1}$}, \\
U=f, &\quad\text{on $M$}, \\
\lim_{y\to0}y^{m_0}\frac{\partial}{\partial y}U = 0
\end{cases}
\end{equation}
is such that
\begin{equation}
\label{eqn:12_extension_defn_y}
P_{2\gamma}f = \frac{n-2\gamma}{2}Q_{2\gamma}f + \frac{d_\gamma}{8\gamma(\gamma-1)}\lim_{y\to0} y^{m_1}\frac{\partial}{\partial y}\Delta_{\phi_1}U .
\end{equation}
\end{lem}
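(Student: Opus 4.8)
The plan is to obtain this lemma as a direct specialization of Theorem~\ref{thm:12_case} to the adapted defining function $y$ produced by Lemma~\ref{lem:12_rhostar}. First I would observe that, under the hypothesis $\lambda_1(-\Delta_{g_+})>\frac{n^2}{4}-\gamma^2$ of the present lemma, Lemma~\ref{lem:12_rhostar} guarantees that the adapted defining function $y$ exists, is positive on all of $X$, and has the asymptotic expansion~\eqref{eqn:12_rhostar_asymptotics}. This expansion is exactly of the form~\eqref{eqn:12_case_rho} required by Theorem~\ref{thm:12_case}, with $\rho_{(2)}=-\frac{J_h}{4(\gamma-1)}\in C^\infty(M)$ and $\Phi=d_\gamma^{-1}Q_{2\gamma}\in C^\infty(M)$. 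Hence the smooth metric measure space~\eqref{eqn:12_rhostar_smms} is precisely the one appearing in Theorem~\ref{thm:12_case} for the choice $\rho=y$.

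Next I would note that, by the defining property of the adapted smooth metric measure space recorded in Lemma~\ref{lem:12_rhostar}, the weighted Schouten scalar $J_{\phi_0}^{m_0}$ of~\eqref{eqn:01_rhostar_smms} and the weighted $Q$-curvature $Q_{\phi_1}^{m_1}$ of~\eqref{eqn:12_rhostar_smms} both vanish; in particular the boundary value problem~\eqref{eqn:12_extension_y}, whose first equation is $L_{4,\phi_1}^{m_1}U=0$ and whose Neumann-type condition agrees with that of~\eqref{eqn:12_case}, is literally the problem~\eqref{eqn:12_case} for $\rho=y$. Its unique solution $U$ is therefore the function to which the conclusion of Theorem~\ref{thm:12_case} applies, so~\eqref{eqn:12_extension_defn} gives
\[ P_{2\gamma}f - \frac{n-2\gamma}{2}d_\gamma\Phi f = \frac{d_\gamma}{8\gamma(\gamma-1)}\lim_{y\to0} y^{m_1}\frac{\partial}{\partial y}\Delta_{\phi_1}U . \]
Substituting $\Phi=d_\gamma^{-1}Q_{2\gamma}$ collapses the correction term to $\frac{n-2\gamma}{2}Q_{2\gamma}f$, which is exactly~\eqref{eqn:12_extension_defn_y}.

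There is essentially no real obstacle here: all of the content is already contained in Theorem~\ref{thm:12_case} and Lemma~\ref{lem:12_rhostar}, and the proof is a two-line bookkeeping argument. The only points meriting a sentence of care are (i) checking that the asymptotic form~\eqref{eqn:12_rhostar_asymptotics} of $y$ matches~\eqref{eqn:12_case_rho} so that Theorem~\ref{thm:12_case} is applicable verbatim, and (ii) correctly reading off $\Phi=d_\gamma^{-1}Q_{2\gamma}$ from that expansion so that the term $\frac{n-2\gamma}{2}d_\gamma\Phi f$ simplifies as claimed.
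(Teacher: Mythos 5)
Your proposal is correct and is essentially identical to the paper's own argument: the paper likewise obtains Lemma~\ref{lem:12_rhostar_extension} as an ``immediate consequence of Theorem~\ref{thm:12_case} and Lemma~\ref{lem:12_rhostar},'' and the only computation needed is reading off $\Phi=d_\gamma^{-1}Q_{2\gamma}$ from~\eqref{eqn:12_rhostar_asymptotics} and substituting into~\eqref{eqn:12_extension_defn}. One small caveat: you assert that the Neumann-type condition in~\eqref{eqn:12_extension_y} ``agrees with'' that of~\eqref{eqn:12_case}, but as printed the former uses $y^{m_0}$ while the latter uses $\rho^{m_1}$ --- this appears to be a typographical slip in the lemma statement (the introductory version~\eqref{eqn:intro_12_case_boundary} also uses $m_1$), and for the purpose of your proof you should either read it as $m_1$ or note briefly that, in light of the indicial-root analysis implicit in~\eqref{eqn:12_expansion}, the $m_0$-weighted condition also selects the same solution of $L_{4,\phi_1}^{m_1}U=0$.
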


As suggested by the proof of Theorem~\ref{thm:gonzalez_qing}, the remaining ingredient in the proof of Theorem~\ref{thm:positive} is the positivity of the energy of the weighted Paneitz operator of the adapted smooth metric measure space.  This is guaranteed by the following result, which is the weighted analogue of the positivity results for the Paneitz operator established in~\cite{Gursky1999,GurskyMalchiodi2014,XuYang2001}.

For convenience, we denote $m:=3-2\gamma$ in the rest of this section.

\begin{prop}
\label{prop:gursky}
Let $(X^{n+1},M^n,g_+)$ be a Poincar\'e--Einstein manifold and let $\gamma\in(1,2)$ if $n\geq4$ and $\gamma\in(1,3/2]$ if $n=3$.  Suppose that $h$ is a representative of the conformal boundary with positive scalar curvature and nonnegative fractional $Q$-curvature $Q_{2\gamma}$.  Set $m=3-2\gamma$ and let $(X^{n+1},g,y^m\dvol,m-1)$ be the adapted smooth metric measure space.  Let $f\in C^\infty(M)$ and let $U$ be the solution to~\eqref{eqn:12_extension_y}.  Then
\begin{equation}
\label{eqn:gursky}
\int_X \left[ \left(\Delta_\phi U\right)^2 - \left(4P - (m+n-1)J_\phi^mg\right)(\nabla U,\nabla U) \right]y^m\dvol \geq 0,
\end{equation}
where all quantities are computed with respect to the adapted smooth metric measure space.  Moreover, equality holds in~\eqref{eqn:gursky} if and only if $U$ is constant.
\end{prop}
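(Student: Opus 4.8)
The plan is to adapt the positivity argument for the Paneitz operator of Gursky~\cite{Gursky1999}, Xu--P.\ Yang~\cite{XuYang2001} and Gursky--Malchiodi~\cite{GurskyMalchiodi2014} to the weighted setting, exploiting the special structure of the adapted smooth metric measure space. First I would record its curvature data. By Lemma~\ref{lem:12_rhostar}, it satisfies $Q_\phi^m=0$ and $J_{\phi_0}^{m_0}=0$ (with $m_0=m-2$), and combining the second identity with Lemma~\ref{lem:pe_smms_formulae} gives $P_\phi^m=P_g$, $J_\phi^m=\tfrac{2}{2\gamma-1}J_g$ and $Y_\phi^m=\tfrac{m}{2}J_\phi^m$, where $J_g$ and $P_g$ denote the Schouten scalar and tensor of the adapted metric $g$; by Proposition~\ref{prop:positive_scalar_curvature} (which applies since $R_h\ge0$) one has $J_g>0$ throughout $X$. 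Substituting $Y_\phi^m=\tfrac{m}{2}J_\phi^m$ into the definition of $Q_\phi^m$ and using $Q_\phi^m=0$, the weight parameter cancels and one is left with $2\lv P_g\rv^2=-\Delta_\phi J_\phi^m+\tfrac{n+1}{2}(J_\phi^m)^2$, which is exactly~\eqref{eqn:vanishing_q2gamma} rewritten for the adapted smooth metric measure space. I would also use the fact, established in the proof of Theorem~\ref{thm:12_case}, that the solution $U$ of~\eqref{eqn:12_extension_y} additionally satisfies $L_{2,\phi_0}^{m_0}U=0$, hence $\Delta_{\phi_0}U=0$ because $J_{\phi_0}^{m_0}=0$; as $\nabla\phi_0$ and $\nabla\phi$ are both proportional to $\nabla\log y$, this identifies $\langle\nabla\phi,\nabla U\rangle$ with a fixed multiple of $\Delta U$.

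Next I would rewrite the left hand side of~\eqref{eqn:gursky}. Applying the weighted Bochner identity to $U$ and integrating against $y^m\dvol$ over $X$---the boundary integrals over $\{y=\varepsilon\}$ tend to zero as $\varepsilon\to0$ thanks to the boundary conditions in~\eqref{eqn:12_extension_y} and the asymptotic expansion of $U$ from the proof of Theorem~\ref{thm:12_case}---converts $\int(\Delta_\phi U)^2 y^m$ into $\int\bigl(\lv\nabla^2 U\rv^2+\tfrac1m\langle\nabla\phi,\nabla U\rangle^2\bigr)y^m+\int\Ric_\phi^m(\nabla U,\nabla U)y^m$. Writing $\Ric_\phi^m=(m+n-1)P_g+J_\phi^m g$ and collecting terms, the left hand side of~\eqref{eqn:gursky} becomes
\begin{equation*}
\int_X\left[\lv\nabla^2 U\rv^2+\tfrac1m\langle\nabla\phi,\nabla U\rangle^2+(m+n-5)P_g(\nabla U,\nabla U)+(m+n)J_\phi^m\lv\nabla U\rv^2\right]y^m\dvol .
\end{equation*}
The first two terms together are the formal warped-product expression for the squared Hessian norm of the horizontal lift of $U$, and the last term is nonnegative since $J_\phi^m>0$ and $m+n>0$.

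It then remains to show that the contribution of the third term $(m+n-5)\int P_g(\nabla U,\nabla U)y^m$, together with whatever portion of the first two terms is needed, is nonnegative. Following~\cite{Gursky1999,XuYang2001,GurskyMalchiodi2014}, I would complete the square in the $(\Delta_\phi U, J_\phi^m U)$ variables---adding and subtracting a suitable multiple of $\int(\Delta_\phi U-aJ_\phi^m U)^2 y^m$ for an appropriate constant $a$ and integrating the $\Delta_\phi J_\phi^m$ from the identity of Step~1 by parts against $\lv\nabla U\rv^2$ and $U^2$---and invoke the algebraic Cauchy--Schwarz inequality $\lv P_g\rv^2\ge\tfrac1{n+1}J_g^2$; when the weight $m=3-2\gamma$ is nonpositive I would first use $\Delta_{\phi_0}U=0$ to trade $\tfrac1m\langle\nabla\phi,\nabla U\rangle^2$ for a multiple of $(\Delta U)^2$ and control it against $\lv\nabla^2 U\rv^2$ and the curvature terms. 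The bookkeeping closes precisely in the stated range of $(\gamma,n)$, which corresponds to the formal warped-product dimension $m+n+1$ being at least $4$. For the equality statement the converse is immediate, and if the displayed integral vanishes then every nonnegative piece of the final decomposition vanishes; in particular the strictly positive residual multiple of $\int J_\phi^m\lv\nabla U\rv^2 y^m$ forces $\nabla U\equiv0$, so $U$ is constant on the connected manifold $X$.

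The technical heart is this last step. As in the Riemannian theory, checking that the coefficients produced by the weighted Bochner identity assemble, after using $Q_\phi^m=0$, into a nonnegative quadratic form in $(\nabla^2 U,\nabla U,U)$ is delicate and is where the dimension restriction genuinely enters; the additional difficulty relative to the Paneitz case is that the weight $m$ may be negative, so the weighted correction $\tfrac1m\langle\nabla\phi,\nabla U\rangle^2$ cannot simply be discarded but must be reabsorbed using the auxiliary equation $\Delta_{\phi_0}U=0$.
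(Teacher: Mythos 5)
Your first two steps reproduce what the paper does: you correctly record the curvature data of the adapted smooth metric measure space (including $P_\phi^m = P$, $J_\phi^m = \tfrac{2}{2-m}J > 0$, and the rewriting of $Q_\phi^m = 0$ as an elliptic equation for $J$) and you correctly convert $\int(\Delta_\phi U)^2\,y^m$ via the weighted Bochner formula to arrive at the intermediate identity
\[
\int_X\Bigl[\lv\nabla^2 U\rv^2+\tfrac1m\langle\nabla\phi,\nabla U\rangle^2+(m+n-5)P(\nabla U,\nabla U)+(m+n)J_\phi^m\lv\nabla U\rv^2\Bigr]y^m\dvol,
\]
matching the paper's use of the Bochner identity before (6.22). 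The identification $\Delta_\phi U = 2y^{-1}\langle\nabla U,\nabla y\rangle$ via $L_{2,\phi_0}^{m_0}U=0$ is also the right tool and is exactly (6.21).

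Where the argument has a genuine gap is the treatment of the boundary when you integrate $\Delta_\phi J_\phi^m$ by parts against $\lv\nabla U\rv^2/J$. You assert that the boundary integrals over $\{y=\varepsilon\}$ vanish; this is true for the weighted Bochner identity itself (because of the asymptotics of $U$ from the proof of Theorem~\ref{thm:12_case}), but it is \emph{not} true for the integration of $\Delta_\phi J$. That integration produces, using~\eqref{eqn:12_J_asymptotics}, a nonzero surviving boundary term
\[
\frac{8\gamma(\gamma-1)^2}{d_\gamma}\int_M \frac{Q_{2\gamma}}{J_h}\lv\nabla f\rv^2\,\dvol ,
\]
and the hypothesis $Q_{2\gamma}\ge 0$ is used \emph{precisely} to discard this term with the correct sign (since $d_\gamma>0$ for $\gamma\in(1,2)$). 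In your sketch the assumption $Q_{2\gamma}\ge 0$ is never invoked, so the argument as written either drops a boundary contribution or does not close; you should make explicit that the only step where $Q_{2\gamma}\ge0$ enters is in the sign of this boundary term (the positivity of $R_h$, by contrast, is what gives $J>0$ in the interior via Proposition~\ref{prop:positive_scalar_curvature}).

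There is also a difference in the final algebraic step worth flagging, although it is a route difference rather than an error. You propose completing the square in $(\Delta_\phi U, J_\phi^m U)$, which would introduce a $U^2$ block into the quadratic form; the paper instead works entirely with $(\nabla^2U,\nabla U)$. After substituting (6.22) into the left-hand side of~\eqref{eqn:gursky}, the paper obtains
\[
\int_X\Bigl[\tfrac{4}{m+n-1}\lv\nabla^2 U\rv^2+\tfrac{2m+n-5}{m+n-1}(\Delta_\phi U)^2+\tfrac{2(m+n-1)^2+8}{(2-m)(m+n-1)}J\lv\nabla U\rv^2\Bigr]y^m\dvol,
\]
which is immediately nonnegative when $2m+n-5\ge 0$; in the complementary case it splits off the tracefree Schouten tensor $P_0$, estimates $-2P_0(\nabla U,\nabla U)\ge -\tfrac{a}{J}\lv P_0\rv^2\lv\nabla U\rv^2-\tfrac{nJ}{(n+1)a}\lv\nabla U\rv^2$, converts $\lv P_0\rv^2$ to $\Delta_\phi J$ via $Q_\phi^m=0$, integrates by parts (here the $Q_{2\gamma}$ boundary term appears), and makes the explicit choice $a=-\tfrac{2(2-m)}{2m+n-5}$. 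Since no $U^2$ terms are ever introduced, there is no zeroth-order coefficient to verify, and the constraint $m+n-3\ge0$ (i.e. $\gamma\le n/2$), together with $m\in(-1,1)$, is checked directly to show the final coefficient of $\int J\lv\nabla U\rv^2y^m$ is positive. Your square-completion route might be made to work, but you have not carried it through, and it requires an additional argument for the sign of the $U^2$ coefficient that the paper's decomposition avoids entirely. I would suggest either following the paper's $P_0$ decomposition or, if pursuing the completion of squares, spelling out the coefficients and confirming the boundary contribution from $Q_{2\gamma}$ appears with the correct sign.
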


\begin{proof}

From Lemma~\ref{lem:pe_smms_formulae} and Proposition~\ref{prop:positive_scalar_curvature} we know that $J_\phi^m=\frac{2}{2-m}J>0$, while the vanishing of $Q_\phi^m$ implies that
\begin{equation}
\label{eqn:qphi_zero}
\frac{1}{2-m}\Delta_\phi J = -\lv P_0\rv^2 + \frac{(m+n-1)(n-m+3)}{(2-m)^2(n+1)}J^2
\end{equation}
for $P_0=P-\frac{J}{n+1}g$ the tracefree part of the Schouten tensor of $g$.  From the proof of Theorem~\ref{thm:12_case} we know that $L_{2,\phi_0}^{m_0}U=0$ and hence, by Lemma~\ref{lem:12_rhostar},
\begin{equation}
\label{eqn:U_eqn}
\Delta_\phi U = 2y^{-1}\lp\nabla U,\nabla y\rp .
\end{equation}

We shall prove~\eqref{eqn:gursky} by using the weighted Bochner formula and adapting~\cite[Lemma~3.1]{Gursky1999} to the weighted setting.  To that end, recall that the weighted Bochner formula states that
\[ \frac{1}{2}\Delta_\phi\lv\nabla w\rv^2 = \lv\nabla^2w\rv^2 + \lp\nabla w,\nabla\Delta_\phi w\rp + \Ric_\phi^m(\nabla w,\nabla w) + mv^{-2}\lp\nabla w,\nabla v\rp^2 \]
for any smooth metric measure space $(X^{n+1},g,v^m\dvol,\mu)$ and any $w\in C^3(X)$.  Applying this to the adapted smooth metric measure space with $w=U$ and using Lemma~\ref{lem:pe_smms_formulae} and~\eqref{eqn:U_eqn} yields
\begin{align*}
\frac{1}{2}\Delta_\phi\lv\nabla U\rv^2 & = \lv\nabla^2U\rv^2 + \lp\nabla U,\nabla\Delta_\phi U\rp + \frac{m}{4}\left(\Delta_\phi U\right)^2 \\
& \quad + (m+n-1)P(\nabla U,\nabla U) + \frac{2}{2-m}J\lv\nabla U\rv^2 .
\end{align*}
It follows from the asymptotics~\eqref{eqn:12_expansion} that there are no boundary terms when integrating by parts with respect to $y^m\dvol$, and hence
\begin{equation}
\label{eqn:bochner}
0 = \int_X \left[ \lv\nabla^2 U\rv^2 - \frac{4-m}{4}\left(\Delta_\phi U\right)^2 + (m+n-1)P(\nabla U,\nabla U) + \frac{2}{2-m}J\lv\nabla U\rv^2\right] y^m\dvol .
\end{equation}
In particular, this implies that
\begin{align*}
& \int_X \left[ \left(\Delta_\phi U\right)^2 - \left(4P - (m+n-1)J_\phi^mg\right)(\nabla U,\nabla U) \right]y^m\dvol \\
& = \int_X \left[ \frac{4}{m+n-1}\lv\nabla^2 U\rv^2 + \frac{2m+n-5}{m+n-1}\left(\Delta_\phi U\right)^2 + \frac{2(m+n-1)^2+8}{(2-m)(m+n-1)}J\lv\nabla U\rv^2\right]y^m\dvol,
\end{align*}
from which the conclusion readily follows when $2m+n-5\geq0$.

Suppose now that $2m+n-5<0$.  Using~\eqref{eqn:qphi_zero}, it follows that for any $a>0$,
\begin{equation}
\label{eqn:P0_pointwise}
\begin{split}
-2P_0(\nabla U,\nabla U) & \geq -2\sqrt{\frac{n}{n+1}}\lv P_0\rv\,\lv\nabla U\rv^2 \\
& \geq -\frac{a}{J}\lv P_0\rv^2\,\lv\nabla U\rv^2 - \frac{nJ}{(n+1)a}\lv\nabla U\rv^2 \\
& = \frac{a}{(2-m)J}\lv\nabla U\rv^2\Delta_\phi J - \left(\frac{(m+n-1)(n-m+3)a}{(2-m)^2(n+1)} + \frac{n}{(n+1)a}\right)J\lv\nabla U\rv^2 .
\end{split}
\end{equation}
Using~\eqref{eqn:12_J_asymptotics} and the Cauchy--Schwarz inequality, we see that
\begin{align*}
\int_X \left(\frac{1}{J}\lv\nabla U\rv^2\Delta_\phi J\right)y^m\dvol & = \int_X\left[-\frac{1}{J}\lp\nabla\lv\nabla U\rv^2,\nabla J\rp + \frac{\lv\nabla J\rv^2}{J^2}\lv\nabla U\rv^2\right]y^m\dvol \\
& \qquad - \int_M \lim_{y\to0}\left(\frac{\lv\nabla U\rv^2}{J}y^m\frac{\partial J}{\partial y}\right)\,\dvol \\
& \geq -\int_X\lv\nabla^2U\rv^2\,y^m\dvol + \frac{8\gamma(\gamma-1)^2}{d_\gamma}\int_M \frac{Q_{2\gamma}}{J_h}\lv\nabla f\rv^2\,\dvol .
\end{align*}
Inserting~\eqref{eqn:bochner} into the above display and using the assumption $Q_{2\gamma}\geq0$ yields
\begin{multline}
\label{eqn:cs_estimate}
\int_X \left(\frac{1}{J}{\lv\nabla U\rv^2}\Delta_\phi J\right)y^m\dvol \\ \geq \int_X\left[\frac{m-4}{4}\left(\Delta_\phi U\right)^2 + (m+n-1)P_0(\nabla U,\nabla U) + \frac{4n-(n-3)m-m^2}{(2-m)(n+1)}J\lv\nabla U\rv^2\right]y^m\dvol .
\end{multline}
Combining~\eqref{eqn:P0_pointwise} and~\eqref{eqn:cs_estimate} together with the choice $a=-\frac{2(2-m)}{2m+n-5}$ yields
\begin{align*}
& \int_X \left[ \left(\Delta_\phi U\right)^2 - \left(4P - (m+n-1)J_\phi^mg\right)(\nabla U,\nabla U) \right]y^m\dvol \\
& = \int_X \left[ \left(\Delta_\phi U\right)^2 - 4P_0(\nabla U,\nabla U) + \left(\frac{2(m+n-1)}{2-m}-\frac{4}{n+1}\right)J\lv\nabla U\rv^2\right]y^m\dvol \\
& \geq \frac{4(n+1)^2-32-n(n-5)^2-6m(n+1)(m+n-5)}{(4-m)(2-m)(n+1)}\int_X J\lv\nabla U\rv^2y^m\dvol .
\end{align*}
Recall now that $m\in(-1,1)$ satisfies $2m+n-5<0$ and $m+n-3\geq0$ --- the latter inequality is equivalent to $\gamma\leq\frac{n}{2}$.  It is straightforward to check that the coefficient of the last integral is positive for $m$ satisfying these constraints, from which the conclusion immediately follows.
\end{proof}

We now have the ingredients to proof Theorem~\ref{thm:positive}, which we restate here for convenience.

\begin{thm}
Let $(X^{n+1},M^n,g_+)$ be a Poincar\'e--Einstein manifold, let $\gamma\in(1,2)$ if $n\geq 4$ and let $\gamma\in(1,3/2]$ if $n=3$.  Suppose that there is a representative $h$ of the conformal boundary with nonnegative scalar curvature $R_h$ and nonnegative fractional $Q$-curvature $Q_{2\gamma}$.  Then $P_{2\gamma}\geq0$.  Moreover, $\ker P_{2\gamma}\not=\{0\}$ if and only if $Q_{2\gamma}\equiv0$ or $n=2\gamma=3$, in which case $\ker P_{2\gamma}=\bR$ consists of the constant functions.
\end{thm}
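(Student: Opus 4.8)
The plan is to convert the problem into the positivity of a weighted Paneitz energy. Concretely, using the adapted smooth metric measure space I will write $\frac{8\gamma(\gamma-1)}{d_\gamma}\int_M f\,P_{2\gamma}f\,\dvol_h$ as a nonnegative multiple of $\int_M Q_{2\gamma}f^2\,\dvol_h$ plus the interior energy controlled by Proposition~\ref{prop:gursky}, and then read off positivity and the kernel. The first reduction is to the case $R_h>0$: the hypotheses $R_h\geq0$ and $Q_{2\gamma}\geq0$ are incompatible with $R_h\equiv0$, since by Corollary~\ref{cor:signs_of_r_and_q2} a scalar-flat representative has $Q_{2\gamma}<0$ somewhere. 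Hence $R_h$ is semi-positive, so Lemma~\ref{lem:12_maximum} applies and gives $R_h>0$ on $M$. In particular $\lambda_1(-\Delta_{g_+})=\frac{n^2}{4}>\frac{n^2}{4}-\gamma^2$ by~\cite{Lee1995}, so the adapted defining function $y$ and the adapted smooth metric measure space $(X^{n+1},g:=y^2g_+,y^{m_1}\dvol,m_1-1)$ of Lemma~\ref{lem:12_rhostar} exist, with $m_1=3-2\gamma$.

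Next, fix $f\in C^\infty(M)$ and let $U$ solve~\eqref{eqn:12_extension_y}. For the adapted defining function the expansion~\eqref{eqn:12_rhostar_asymptotics} shows $\Phi=d_\gamma^{-1}Q_{2\gamma}$ in the notation of Theorem~\ref{thm:12_case}, and Lemma~\ref{lem:12_rhostar} gives $Q_{\phi_1}^{m_1}=0$ for the adapted space. Thus Corollary~\ref{cor:12_inequality} specializes to
\begin{multline*}
\frac{8\gamma(\gamma-1)}{d_\gamma}\left[\int_M f\,P_{2\gamma}f\,\dvol_h-\frac{n-2\gamma}{2}\int_M Q_{2\gamma}f^2\,\dvol_h\right] \\ = \int_X\left[(\Delta_{\phi_1}U)^2-\left(4P-(m_1+n-1)J_{\phi_1}^{m_1}g\right)(\nabla U,\nabla U)\right]y^{m_1}\dvol_g .
\end{multline*}
Since $R_h>0$ and $Q_{2\gamma}\geq0$, Proposition~\ref{prop:gursky} shows the right-hand side is nonnegative and vanishes if and only if $U$ is constant. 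As $\gamma\in(1,2)$ gives $d_\gamma>0$ and $8\gamma(\gamma-1)>0$, and $\gamma\leq\frac{n}{2}$ gives $\frac{n-2\gamma}{2}\geq0$ while $Q_{2\gamma}\geq0$, it follows that $\int_M f\,P_{2\gamma}f\,\dvol_h\geq0$ for all $f$; since $P_{2\gamma}$ is formally self-adjoint, $P_{2\gamma}\geq0$.

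For the kernel, suppose $P_{2\gamma}f=0$ with $f\not\equiv0$. Then both summands on the right of the displayed identity vanish; the vanishing of the interior integral and the equality case of Proposition~\ref{prop:gursky} force $U$, hence $f$, to be a nonzero constant. If $n\neq2\gamma$ then $\frac{n-2\gamma}{2}>0$, so $\int_M Q_{2\gamma}f^2\,\dvol_h=0$ with $Q_{2\gamma}\geq0$ forces $Q_{2\gamma}\equiv0$; conversely, when $Q_{2\gamma}\equiv0$ one has $P_{2\gamma}(1)=\frac{n-2\gamma}{2}Q_{2\gamma}=0$, so $\bR\subseteq\ker P_{2\gamma}$, and the identity shows every kernel element is constant, whence $\ker P_{2\gamma}=\bR$. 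If $n=2\gamma$, necessarily $n=3$ and $\gamma=3/2$; the term $\frac{n-2\gamma}{2}Q_{2\gamma}f$ drops from Lemma~\ref{lem:12_rhostar_extension}, and $U\equiv1$ solves~\eqref{eqn:12_extension_y} for $f=1$ because $L_{4,\phi_1}^{m_1}(1)=\frac{m_1+n-3}{2}Q_{\phi_1}^{m_1}=0$, so $P_{2\gamma}(1)=0$ and the identity again yields $\ker P_{2\gamma}=\bR$. This is exactly the stated dichotomy.

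Since Lemma~\ref{lem:12_maximum}, Lemma~\ref{lem:12_rhostar_extension} and Proposition~\ref{prop:gursky} are already established, the argument above is essentially assembly; the only delicate points are the use of Corollary~\ref{cor:signs_of_r_and_q2} to exclude a scalar-flat boundary — which is what makes the hypothesis of Proposition~\ref{prop:gursky} ($R_h>0$) available — and the bookkeeping in the critical case $n=2\gamma=3$, where the constants always lie in $\ker P_{2\gamma}$ independently of $Q_{2\gamma}$.
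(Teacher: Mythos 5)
Your proposal is correct and follows the paper's proof essentially verbatim: both use Corollary~\ref{cor:signs_of_r_and_q2} and Lemma~\ref{lem:12_maximum} to upgrade the hypotheses to $R_h>0$, then specialize the energy identity of Lemma~\ref{lem:12_rhostar_extension} (equivalently Corollary~\ref{cor:12_inequality} with $\Phi=d_\gamma^{-1}Q_{2\gamma}$ and $Q_{\phi_1}^{m_1}=0$) to the adapted smooth metric measure space and invoke Proposition~\ref{prop:gursky}. The only differences are cosmetic: you establish $R_h>0$ (and hence the existence of the adapted defining function via $\lambda_1(-\Delta_{g_+})=\tfrac{n^2}{4}$) before writing the identity rather than after, and you spell out the kernel dichotomy between $Q_{2\gamma}\equiv0$ and $n=2\gamma=3$ in more detail than the paper does.
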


\begin{proof}

Let $y$ be the adapted defining function.  Given $f\in C^\infty(M)$, it follows from Lemma~\ref{lem:12_rhostar_extension} that
\begin{multline}
\label{eqn:rhostar_energy_identity}
\int_M P_{2\gamma}f\,f\,\dvol_h = \frac{n-2\gamma}{2}\int_M Q_{2\gamma}f^2\,\dvol_h \\ + \frac{d_\gamma}{8\gamma(\gamma-1)}\int_X\left[\left(\Delta_\phi U\right)^2 - \left(4P-(m+n-1)J_\phi^mg\right)(\nabla U,\nabla U)\right]y^m\dvol_{g}
\end{multline}
for $U$ the solution to~\eqref{eqn:12_extension_y}.  From Corollary~\ref{cor:signs_of_r_and_q2}, we know that $R_h$ is semi-positive, and hence, by Lemma~\ref{lem:12_maximum}, in fact $R_h>0$.  We may then apply Proposition~\ref{prop:gursky} to conclude that $P_{2\gamma}\geq0$.  Moreover, for $f\not\equiv0$, Proposition~\ref{prop:gursky} implies $\int_M f\,P_{2\gamma}f=0$ if and only if $f$ is constant and either $Q_{2\gamma}=0$ or $n=2\gamma=3$, as desired.
\end{proof}

We now turn to the proof of Theorem~\ref{thm:maximum}, which gives necessary conditions for the fractional GJMS operators to satisfy a strong maximum principle.  The main technical ingredient in the proof is Lemma~\ref{lem:12_maximum}, after which point one can argue as in~\cite[Theorem~A]{GurskyMalchiodi2014}.  For the convenience of the reader, we restate Theorem~\ref{thm:maximum} here and provide a sketch of the proof.

\begin{thm}
\label{thm:12_maximum}
Let $(X^{n+1},M^n,g_+)$ be a Poincar\'e--Einstein manifold and suppose that there is a representative $h$ for the conformal boundary with scalar curvature $R_h\geq0$ and semi-positive fractional $Q$-curvature $Q_{2\gamma,h}$ for some $1<\gamma<\min\{2,n/2\}$ fixed.  Then for any $f\in C^\infty(M)$ such that $P_{2\gamma}f\geq0$, either $f>0$ or $f\equiv0$.  Moreover, if $f>0$, then the representative $h_f:=f^{\frac{4}{n-2\gamma}}h$ of the conformal boundary has positive scalar curvature and nonnegative fractional $Q$-curvature $Q_{2\gamma,h_f}$.
\end{thm}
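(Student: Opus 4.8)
The plan is to mount the argument of Gursky and Malchiodi~\cite[Theorem~A]{GurskyMalchiodi2014} on the adapted smooth metric measure space, using the positivity result Proposition~\ref{prop:gursky}, the relationship between scalar curvature and $Q_{2\gamma}$ from Proposition~\ref{prop:positive_scalar_curvature} and Lemma~\ref{lem:12_maximum}, and the Hopf Lemma Proposition~\ref{prop:hopf}; in spirit this is the fourth-order analogue of the proof of Theorem~\ref{thm:01_maximum}.

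First I would make the standard reductions. Since $Q_{2\gamma}$ is semi-positive, Lemma~\ref{lem:12_maximum} upgrades the hypothesis $R_h\geq0$ to $R_h>0$, and since $R_h\geq0$ Lee's theorem gives $\lambda_1(-\Delta_{g_+})=\tfrac{n^2}{4}>\tfrac{n^2}{4}-\gamma^2$, so the adapted smooth metric measure space $(X^{n+1},g=y^2g_+,y^{m_1}\dvol,m_1-1)$ of Lemma~\ref{lem:12_rhostar} is defined, with $Q_{\phi_1}^{m_1}=0=J_{\phi_0}^{m_0}$ and $y$ as in~\eqref{eqn:12_rhostar_asymptotics}. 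Fix $f\in C^\infty(M)$ with $P_{2\gamma}f\geq0$ and $f\not\equiv0$ and let $U$ solve~\eqref{eqn:12_extension_y}; as in the proofs of Theorem~\ref{thm:12_case} and Proposition~\ref{prop:gursky}, $U$ also satisfies $L_{2,\phi_0}^{m_0}U=0$, hence $\Delta_{\phi_0}U=0$ in $X$ (because $J_{\phi_0}^{m_0}=0$), and Lemma~\ref{lem:12_rhostar_extension} gives
\[
P_{2\gamma}f \;=\; \tfrac{n-2\gamma}{2}Q_{2\gamma}f \;+\; \tfrac{d_\gamma}{8\gamma(\gamma-1)}\lim_{y\to0}y^{m_1}\tfrac{\partial}{\partial y}\Delta_{\phi_1}U ,
\]
the last coefficient being positive since $d_\gamma>0$ for $\gamma\in(1,2)$.

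Next, the strong maximum principle for the non-degenerate interior operator $\Delta_{\phi_0}$ yields a dichotomy. If $U\equiv c$ is constant then $f\equiv c$, $\Delta_{\phi_1}U\equiv0$, and the displayed identity reduces to $P_{2\gamma}f=\tfrac{n-2\gamma}{2}cQ_{2\gamma}$; since $n>2\gamma$ and $Q_{2\gamma}$ is semi-positive, $P_{2\gamma}f\geq0$ forces $c>0$, so $f>0$. Otherwise $U$ is nonconstant and attains its minimum at some $p\in M$, so $\min_M f=f(p)=U(p)$, and here I would carry out the Gursky--Malchiodi reduction in the weighted setting: using $Q_{\phi_1}^{m_1}=0$, $J_{\phi_1}^{m_1}=\tfrac{2}{2-m_1}J>0$ (Lemma~\ref{lem:pe_smms_formulae} and Proposition~\ref{prop:positive_scalar_curvature}) and the identity $\Delta_{\phi_1}U=2y^{-1}\langle\nabla U,\nabla y\rangle$ from~\eqref{eqn:U_eqn}, one rewrites the condition $P_{2\gamma}f\geq0$ at $p$ as a sign condition on a second-order operator with weight $m_1=3-2\gamma\in(-1,1)$ applied to a nonnegative function $w$ on $X$ that vanishes at $p$ (precisely the mechanism by which, in~\cite[Theorem~A]{GurskyMalchiodi2014}, the fourth-order condition becomes a second-order one once $R>0$ and $Q\geq0$). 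Applying Proposition~\ref{prop:hopf} to $w$, exactly as it was applied to the Schouten scalar in Lemma~\ref{lem:12_maximum}, forces $\lim_{y\to0}y^{m_1}\partial_y w(p,y)>0$; combined with the displayed identity this gives first $\min_M f\geq0$ and then, ruling out equality by the strictness in the Hopf conclusion, $\min_M f>0$, so $f>0$.

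Finally, for the ``moreover'' statement: if $f>0$ then, as $\gamma<n/2$, $h_f=f^{4/(n-2\gamma)}h$ is a genuine representative of the conformal boundary and conformal covariance of $P_{2\gamma}$ gives $Q_{2\gamma,h_f}=\tfrac{2}{n-2\gamma}f^{-\frac{n+2\gamma}{n-2\gamma}}P_{2\gamma}f\geq0$; this is not identically zero, for otherwise $P_{2\gamma}f\equiv0$ and the energy identity~\eqref{eqn:rhostar_energy_identity} together with Proposition~\ref{prop:gursky} would force $Q_{2\gamma}\equiv0$ (using $f>0$) and $U$ constant, contradicting the semi-positivity of $Q_{2\gamma}$. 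The positivity $R_{h_f}>0$ is again a by-product of the Gursky--Malchiodi reduction (which produces positivity of the relevant second-order operator applied to a power of $f$, hence the sign of $R_{h_f}$ via the conformal transformation law for scalar curvature), or equivalently: once $R_{h_f}\geq0$ is in hand, Corollary~\ref{cor:signs_of_r_and_q2} excludes $R_{h_f}\equiv0$ and Lemma~\ref{lem:12_maximum} applied to $h_f$ yields $R_{h_f}>0$. I expect the third paragraph to be the main obstacle: identifying the correct auxiliary function $w$ so that its weighted Laplacian carries the admissible parameter $m_1\in(-1,1)$ rather than the inadmissible $m_0=1-2\gamma<-1$, and verifying the interior-sphere hypotheses of Proposition~\ref{prop:hopf} at $p$ --- for which, if the minimum set of $f$ is large, one also needs a short unique-continuation argument for $\Delta_{\phi_0}U=0$ with the Neumann boundary condition.
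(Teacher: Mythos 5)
Your setup (paragraphs one and two) is fine, and you correctly see that $f>0$ follows in the case where the extension $U$ is constant. But the third paragraph—which you yourself flag as the main obstacle—is where your route and the paper's route diverge, and the gap you identify there is a genuine obstruction. The operator annihilating $U$ in the interior is $L_{2,\phi_0}^{m_0}$ with weight $m_0 = 1-2\gamma \in (-3,-1)$; Proposition~\ref{prop:hopf} requires the weight parameter to lie in $(-1,1)$, so you cannot apply the Hopf lemma to $U$ itself. You propose to remedy this by finding an auxiliary function $w$ in $X$ for which the admissible weight $m_1\in(-1,1)$ appears, but you never identify $w$, and there is no obvious candidate: the Gursky--Malchiodi ``reduction'' you invoke is not a maneuver inside a filling manifold but a continuity argument carried out entirely on the boundary $M$.

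That boundary continuity argument is in fact the paper's proof. Writing $f_\lambda = 1-\lambda+\lambda f$, $h_\lambda = f_\lambda^{4/(n-2\gamma)}h$, and $\lambda_0 = (1-\min f)^{-1}$, conformal covariance of $P_{2\gamma}$ shows $Q_{2\gamma,h_\lambda}$ is semi-positive for $\lambda\in[0,\lambda_0)$, and then Lemma~\ref{lem:12_maximum} (which does use the Hopf lemma, but only on the scalar-curvature level, where the admissible weight $m_1$ appears) plus a continuity argument give $R_{h_\lambda}>0$ on that interval. Sending $\lambda\to\lambda_0$ in the \emph{boundary} conformal transformation formula for $R_{h_\lambda}$ produces the second-order inequality $\Delta_h f_{\lambda_0}\le \frac{n-2\gamma}{4(n-1)}R_h f_{\lambda_0}$ on $M$, and the classical strong maximum principle on the closed manifold $M$ finishes the argument. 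No extension of $f$ into $X$, no strong maximum principle in $X$, and no Hopf lemma applied to $U$ or any auxiliary $w$ are used. I suggest you replace your third paragraph with this continuity argument: it avoids the inadmissible weight entirely, and it is where the conclusion $R_{h_f}>0$ also comes from (your fourth paragraph reaches it indirectly but can be simplified by the same route).
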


\begin{proof}

If $\min f>0$ we are done, so suppose $\min f\leq 0$.  For $\lambda\in[0,1]$, define $f_\lambda:=1-\lambda+\lambda f$, so that $f_0\equiv1$ and $f_1=f$.  Set $\lambda_0=\frac{1}{1-\min f}$, so that $\min f_{\lambda_0}=0$ and $f_\lambda>0$ for $\lambda\in[0,\lambda_0)$.  Let $\lambda\in[0,\lambda_0)$ and set $h_\lambda=f_\lambda^{\frac{4}{n-2\gamma}}h$.  It follows from the definition of the fractional $Q$-curvatures and the conformal covariance of the fractional GJMS operators that
\[ \frac{n-2\gamma}{2}Q_{2\gamma,h_\lambda} = f_\lambda^{-\frac{n+2\gamma}{n-2\gamma}}\left(\frac{(n-2\gamma)(1-\lambda)}{2}Q_{2\gamma,h} + \lambda P_{2\gamma}f\right), \]
and hence $Q_{2\gamma,h_\lambda}$ is semi-positive.  By Lemma~\ref{lem:12_maximum}, we have that $R_{h}>0$.  Applying Lemma~\ref{lem:12_maximum} again, there cannot be a $\lambda_1\in[0,\lambda_0)$ such that $\min R_{h_{\lambda_1}}=0$, and hence a simple continuity argument shows that $R_{h_\lambda}>0$.  On the other hand, we directly compute that
\[ R_{h_\lambda} = f_\lambda^{-\frac{n-2\gamma+4}{n-2\gamma}}\left(-\frac{4(n-1)}{n-2\gamma}\Delta f - \frac{8(n-1)(\gamma-1)}{(n-2\gamma)^2}f^{-1}\lv\nabla u\rv^2 + R_hf\right) . \]
Combining these observations and taking the limit $\lambda\to\lambda_0$ yields
\begin{equation}
\label{eqn:12_maximum_obs}
\Delta f_{\lambda_0} \leq \frac{n-2\gamma}{4(n-1)}R_hf_{\lambda_0} .
\end{equation}
From the strong maximum principle we conclude that $f_{\lambda_0}\equiv0$.  If $\lambda_0=1$ then $f\equiv0$, and so we are done.  Otherwise $f=-\frac{1-\lambda_0}{\lambda_0}$, and hence
\[ 0 \leq P_{2\gamma}f = -\frac{(n-2\gamma)(1-\lambda_0)}{2\lambda_0}Q_{2\gamma,h}, \]
a contradiction.

Finally, the above argument showing that $R_{h_\lambda}>0$ for $\lambda\in(0,\lambda_0)$ implies that the scalar curvature of $h_f$ is nonnegative, while the assumption $P_{2\gamma}f\geq0$ implies that $Q_{2\gamma,h_f}$ is nonnegative.  From the argument of Lemma~\ref{lem:12_maximum} we see that either $R_{h_f}>0$ or $R_{h_f}\equiv0$.  The latter case contradicts that $R_{h}>0$, as $R_{h_f}\equiv0$ implies that the Yamabe constant of the conformal boundary is equal to zero, while $R_{h}>0$ implies that it is positive.
\end{proof}

Following the argument of~\cite[Proposition~2.4]{GurskyMalchiodi2014} but using Lemma~\ref{lem:12_maximum} and Theorem~\ref{thm:12_maximum} in place of~\cite[Lemma~2.1]{GurskyMalchiodi2014} and~\cite[Theorem~2.2]{GurskyMalchiodi2014}, respectively, we easily derive the positivity of the Green's function for the fractional GJMS operators $P_{2\gamma}$ under the assumptions of Theorem~\ref{thm:12_maximum}.

\begin{cor}
\label{cor:12_positive_g}
Let $(X^{n+1},M^n,g_+)$ be a Poincar\'e--Einstein manifold.  Let $\gamma\in(1,2)$ and suppose that there is a representative $h$ for the conformal boundary with $R\geq0$ and semi-positive $Q_{2\gamma}$.  Given $p\in M$, let $G_p$ denote the Green's function for $P_{2\gamma}$ with pole at $p$.  Then $G_p>0$ on $M\setminus\{p\}$.
\end{cor}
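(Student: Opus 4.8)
The plan is to reproduce the proof of Proposition~2.4 of~\cite{GurskyMalchiodi2014}, with Lemma~\ref{lem:12_maximum} and Theorem~\ref{thm:12_maximum} playing the roles that the scalar curvature estimate and the strong maximum principle for the Paneitz operator play there. First I would note that $G_p$ is genuinely defined: since $1<\gamma<\min\{2,n/2\}$, the hypotheses of Theorem~\ref{thm:positive} are satisfied, and semi-positivity of $Q_{2\gamma}$ excludes the case $Q_{2\gamma}\equiv0$ while $2\gamma<n$ excludes the critical case $n=2\gamma=3$; hence $P_{2\gamma}\ge0$ with $\ker P_{2\gamma}=\{0\}$. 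As $P_{2\gamma}$ is an elliptic pseudodifferential operator of order $2\gamma<n$ with positive principal symbol on the closed manifold $M$, it is therefore invertible, and the Schwartz kernel $G(x,q)$ of its inverse is symmetric, smooth off the diagonal, and satisfies $G(x,q)=c_{n,\gamma}\,d(x,q)^{-(n-2\gamma)}\bigl(1+o(1)\bigr)$ near the diagonal with $c_{n,\gamma}>0$; in particular $G_p:=G(\cdot,p)$ is positive in a punctured neighborhood of $p$ and tends to $+\infty$ at $p$.

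Next I would establish $G_p\ge0$ on $M\setminus\{p\}$. For $\phi\in C^\infty(M)$ with $\phi\ge0$ and $\phi\not\equiv0$, the function $u(x):=\int_M G(x,q)\,\phi(q)\,\dvol_h(q)$ is smooth and solves $P_{2\gamma}u=\phi\ge0$, so Theorem~\ref{thm:12_maximum} gives $u>0$ on $M$ (it cannot vanish identically, as $\phi\not\equiv0$). Letting $\phi$ run through an approximate identity concentrating at a fixed $q\ne p$ and using continuity of $G(\cdot,q)$ away from $q$, we obtain $G(x,q)\ge0$ for all $x\ne q$, and hence $G_p\ge0$ on $M\setminus\{p\}$.

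To upgrade to strict positivity I would argue exactly as in~\cite{GurskyMalchiodi2014}. Suppose, for contradiction, that $G_p(q_0)=0$ for some $q_0\ne p$. For small $\varepsilon>0$ the function $G_p+\varepsilon$ is positive on $M\setminus\{p\}$, so $h_\varepsilon:=(G_p+\varepsilon)^{4/(n-2\gamma)}h$ is a bona fide metric there, and $P_{2\gamma}(G_p+\varepsilon)=\delta_p+\varepsilon\tfrac{n-2\gamma}{2}Q_{2\gamma}$ is a nonnegative distribution whose smooth part $\varepsilon\tfrac{n-2\gamma}{2}Q_{2\gamma}$ is semi-positive. Applying the conformal covariance of $P_{2\gamma}$ to $h_\varepsilon$, Lemma~\ref{lem:12_maximum}, and the continuity (``sweeping'') argument together with the strong maximum principle from the proof of Theorem~\ref{thm:12_maximum}---with $\delta_p$ first replaced by a mollification supported near $p$ so that the pole of $G_p$ plays no role away from $p$, and then passing to the limit---one reaches a contradiction with the fact that $G_p>0$ near $p$ as $\varepsilon\to0$. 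This is precisely the scheme of Proposition~2.4 of~\cite{GurskyMalchiodi2014}, with all second-order inputs there replaced by the ones developed in Section~\ref{sec:positivity}, and it yields $G_p>0$ on $M\setminus\{p\}$.

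The main obstacle is the last step. Because $P_{2\gamma}$ is nonlocal---equivalently, its interior extension from Theorem~\ref{thm:12_case} is fourth order---there is no direct strong maximum principle or Hopf lemma on $M$ to rule out an interior zero of $G_p$, which is exactly why the argument must be routed through Theorem~\ref{thm:12_maximum} and Lemma~\ref{lem:12_maximum}; the only genuine technical points are keeping track of the singularity at $p$ (handled by mollification and a limiting argument) and justifying the conformal change on the set where $G_p>0$, both of which are carried out verbatim as in~\cite{GurskyMalchiodi2014}.
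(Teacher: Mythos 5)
Your proposal is correct and follows essentially the same route as the paper: the paper's proof is precisely ``follow Gursky--Malchiodi's Proposition~2.4 with Lemma~\ref{lem:12_maximum} and Theorem~\ref{thm:12_maximum} substituting for their Lemma~2.1 and Theorem~2.2,'' which is exactly the scheme you lay out, and your preliminary observations (invertibility of $P_{2\gamma}$ from Theorem~\ref{thm:positive}, the approximation argument giving $G_p\geq0$, and the sweeping/conformal-change step giving strict positivity) are the natural way to fill in that citation.
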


We expect that, through a synthesis of the ideas in~\cite{GonzalezQing2010,GurskyMalchiodi2014} together with our approach to studying the fractional GJMS operators $P_{2\gamma}$ with $\gamma\in(1,2)$ as boundary operators associated to weighted Paneitz operators, one can use these results to construct representatives of the conformal boundary with constant positive $Q_{2\gamma}$ under the assumptions of Theorem~\ref{thm:12_maximum}.

\appendix
\section{A factorization for the weighted GJMS operators}
\label{app:product}

In this appendix we prove the following factorization theorem for the GJMS operators of Riemannian products $(X^{n+1}\times F^m,g_+\oplus g_F)$ of Einstein manifolds $(X^{n+1},g_+)$ and $(F^m,g_F)$ with $\Ric(g_+)=-ng_+$ and $\Ric(g_F)=(m-1)g_F$ when acting on functions which are independent of $F$.  After treating $m$ as a formal variable, this yields Theorem~\ref{thm:weighted_gjms_factorization}.  Our proof is by an adaptation of the proof given by Fefferman and Graham of~\cite[Proposition~7.9]{FeffermanGraham2012} in the case $m=0$, which gives the factorization of the GJMS operators on an Einstein manifold.

\begin{thm}
\label{thm:product}
Let $(X^{n+1},g_+)$ and $(F^m,g_F)$ be two Einstein manifolds satisfying $\Ric(g_+)=-ng_+$ and $\Ric(g_F)=(m-1)g_F$ and let $k\in\bN$.  Denote by $\pi\colon X\times F\to X$ the canonical projection.  Then for any $f\in C^\infty(M)$, it holds that
\begin{equation}
\label{eqn:product}
L_{2k}\left(\pi^\ast f\right) = \left[\prod_{j=1}^k \left(-\Delta - \frac{(n-m+2k-4j+3)(m+n-2k+4j-3)}{4}\right)\right]\left(\pi^\ast f\right)
\end{equation}
where $L_{2k}$ is the $k$-th order GJMS operator of the metric $g_+\oplus h$.
\end{thm}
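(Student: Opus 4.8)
The strategy is to follow the argument Fefferman and Graham give for the Einstein case \cite[Proposition~7.9]{FeffermanGraham2012}, replacing their explicit Poincar\'e metric of an Einstein manifold by an explicit Poincar\'e metric of the \emph{product} $(X^{n+1}\times F^m,g_+\oplus g_F)$. Since the GJMS operator $L_{2k}$ is a natural differential operator, polynomial in the jets of the metric, it suffices to verify \eqref{eqn:product} for a dense family of Einstein pairs of the given type; we may thus assume that $(X^{n+1},g_+)$ is closed and exploit the complete system of eigenfunctions of $\Delta_{g_+}$ (or, equally well, reduce to the homogeneous model $\mathbb{H}^{n+1}\times S^m$, where everything below is elementary). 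If $\Delta_{g_+}f=-\mu f$, then $\pi^\ast f$ is an eigenfunction of $\Delta_{g_+\oplus g_F}$ with the same eigenvalue $-\mu$, so the right-hand side of \eqref{eqn:product} acts on $\pi^\ast f$ as the scalar $\prod_{j=1}^k\bigl(\mu-c_j\bigr)$ with $c_j=\tfrac14(n-m+2k-4j+3)(m+n-2k+4j-3)$. The claim therefore reduces to showing that $L_{2k}$ acts on $\pi^\ast f$ as this same scalar, and by the Graham--Zworski realization of GJMS operators \cite{GrahamZworski2003} this is a computation of the scattering operator $d_k\,S\bigl(\tfrac{d}{2}+k\bigr)$ (interpreted via its residue at the trivial pole), where $d:=n+1+m$ and $S$ is associated to the Poincar\'e metric of $(X\times F,g_+\oplus g_F)$.

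First I would construct that Poincar\'e metric. Because the product metric is a block sum of two Einstein metrics, the natural ansatz is the block-warped form
\[
\hat g_+ \;=\; r^{-2}\Bigl(dr^2 + a(r)^2\,g_+ + b(r)^2\,g_F\Bigr),\qquad a(0)=b(0)=1,
\]
on $(X\times F)\times(0,\varepsilon)$. The Einstein condition $\Ric(\hat g_+)=-(n+m+1)\hat g_+$ has block-diagonal Ricci, and using $\Ric(g_+)=-ng_+$ and $\Ric(g_F)=(m-1)g_F$ it collapses to a coupled system of second-order ODEs for $a$ and $b$ (the $dr^2$-, $g_+$- and $g_F$-components), which can be integrated in closed form; the opposite signs of the two Einstein constants make the solutions combinations of hyperbolic and trigonometric functions of $r$, and one records the resulting explicit $a(r)$, $b(r)$. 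Uniqueness of the Poincar\'e metric (\cite{FeffermanGraham2012}) guarantees this is \emph{the} relevant fill-in.

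Next I would separate variables in the Poisson equation \eqref{eqn:poisson_equation} for $\hat g_+$: writing $u=\varphi(r)\,\pi^\ast f$, the equation $-\Delta_{\hat g_+}u-s(d-s)u=0$ becomes a single second-order ODE for $\varphi$ whose coefficients are built from $a,b$ together with the term $\mu\,a(r)^{-2}\varphi$ coming from $\Delta_{g_+}f=-\mu f$. After the change of variable adapted to $a(r)$ this is a hypergeometric equation; its Frobenius exponents at $r=0$ are $d-s$ and $s$, the interior-regular solution is the one picked out by the scattering construction (cf.\ \cite{GrahamZworski2003,MazzeoMelrose1987}), and the ratio of its leading coefficients gives $S(s)$ on $\pi^\ast f$ as an explicit quotient of Gamma functions in $s$, $\mu$, $n$ and $m$. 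Setting $s=\tfrac{d}{2}+k$ and taking the appropriate residue, the Gamma quotient collapses, by the functional equation, to a polynomial of degree $k$ in $\mu$, and a direct check of the resulting linear factors identifies it with $\prod_{j=1}^k(\mu-c_j)$, with the $c_j$ exactly as above; this proves \eqref{eqn:product}, and hence Theorem~\ref{thm:product}. Finally, since the right-hand side of \eqref{eqn:product} is a rational (indeed polynomial) expression in $m$ and the weighted GJMS operator $L_{2k,\phi}^m$ of $(X^{n+1},g_+,1^m\dvol,m-1)$ is by definition obtained from $L_{2k}$ of the warped product by treating $m$ as a formal parameter, analytic continuation in $m$ yields the factorization \eqref{eqn:weighted_gjms_factorization} of Theorem~\ref{thm:weighted_gjms_factorization}. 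The main obstacle is the middle part: solving the coupled Einstein ODEs for $a,b$ in closed form and then carrying out the Gamma-function bookkeeping for the hypergeometric connection coefficients at the special value $s=\tfrac{d}{2}+k$ cleanly enough to read off that the polynomial in $\mu$ factors with precisely the stated constants; keeping track of the interior regularity condition and the residue at the trivial pole is the other delicate point.
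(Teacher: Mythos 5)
Your proposal takes a genuinely different route from the paper. The paper works directly with the explicit \emph{ambient} metric of the product, which by Gover and Leitner~\cite{GoverLeitner2009} takes the simple form $\tilde g = 2\rho\,dt^2 + 2t\,dt\,d\rho + t^2 g_\rho$ with $g_\rho = (1-\tfrac{\rho}{2})^2 g_+ + (1+\tfrac{\rho}{2})^2 g_F$, and then computes $L_{2k}(\pi^\ast f)$ straight from the GJMS definition as the obstruction to formally extending $\pi^\ast f$ to an ambient-harmonic function of homogeneity $w = -\tfrac{n+m+1}{2}+k$. Separating out the $\rho$-dependence gives a three-term recurrence for the Taylor coefficients $f^{(\ell)}$, which one closes by an explicit induction on $\ell$ and reads off the factorization directly as $q_k$. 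No spectral decomposition, no reduction to a dense family, and no scattering theory enter.

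Your route --- construct the Poincar\'e metric rather than the ambient metric, pass to eigenfunctions of $\Delta_{g_+}$, separate variables, recognize a hypergeometric ODE, and extract the polynomial from the residue of $S(s)$ at the trivial pole $s=\tfrac{d}{2}+k$ --- is also viable, and it is essentially the Graham--Zworski realization of the same object. But a few cautions. First, the guess that the warping functions $a(r), b(r)$ are ``combinations of hyperbolic and trigonometric functions'' is wrong: the $\rho \leftrightarrow -r^2/2$ translation of the Gover--Leitner ambient metric shows the Poincar\'e metric of the product is $r^{-2}\bigl(dr^2 + (1+\tfrac{r^2}{4})^2 g_+ + (1-\tfrac{r^2}{4})^2 g_F\bigr)$, so $a$ and $b$ are the \emph{polynomials} $1\pm r^2/4$. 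This is harmless for the outline but suggests the ODE step has not actually been attempted. Second, the reduction ``it suffices to verify the identity on a dense family / on the homogeneous model'' is glossed: the identity is a universal polynomial identity in the jets of a product metric, and one must argue that the Weyl part of $g_+$ cannot appear in $L_{2k}(\pi^\ast f)$; in your scheme this falls out of the fact that $S(s)$ commutes with $\Delta_{g_+}$ (because the separated ODE sees $g_+$ only through the eigenvalue $\mu$), but this should be said rather than asserted, and some care is needed to pass from closed $X$ to the general statement. Third, working with a genuine scattering operator requires a global asymptotically hyperbolic fill-in, which is awkward for noncompact conformal infinities such as $\mathbb{H}^{n+1}\times S^m$; what is actually available in general is the \emph{formal} Poincar\'e metric, at which point the residue-of-$S(s)$ formulation is really the ambient-metric obstruction in disguise, and the paper's direct recursion is shorter. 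In short: your approach is a legitimate alternative and would yield the same polynomial, but the crucial middle steps (identifying $a,b$, solving the hypergeometric ODE, and justifying the Zariski-density/eigenfunction reduction) are left to the reader, whereas the paper's ambient-metric recurrence avoids all of them at the cost of one induction.
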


\begin{proof}

It was shown by Gover and Leitner~\cite{GoverLeitner2009} that the ambient metric of $(X^{n+1}\times F^m,g_+\oplus g_F)$ can be written in the form
\[ \tilde g = 2\rho\,dt^2 + 2t\,dt\,d\rho + t^2g_\rho \]
for
\begin{equation}
\label{eqn:grho}
g_\rho = \left(1-\frac{\rho}{2}\right)^2g_+ + \left(1+\frac{\rho}{2}\right)^2g_F .
\end{equation}
The GJMS operator $P_{2k}$~\cite{GJMS1992} is defined to be the obstruction to formally extending a given function $u_0\in C^\infty(X\times F)$ to a homogeneous function $U$ of degree $w=-\frac{n+m+1}{2}+k$ which is harmonic with respect to the ambient metric.  The extension $U=t^wu$ of $u_0$, where $u=u(x,\rho)$ and $x\in X\times F$, will be harmonic if and only if
\[ 0 = -2\rho u^{\prime\prime} + \left(2(k-1) - \rho\tr_{g_\rho}g_\rho^\prime\right)u^\prime + \Delta_\rho f - \frac{m+n+1-2k}{4}\tr_{g_\rho}g_\rho^\prime u , \]
where ``prime'' denotes differentiation with respect to $\rho$.  In the case $u_0=\pi^\ast f_0$, it follows that $u$ depends only on $X$ and $\rho$; i.e.\ we may write $u=\pi^\ast f$.  Moreover, using the specific form~\eqref{eqn:grho} of the metrics $g_\rho$ in the above display yields
\begin{align*}
0 & = -2\left(1-\frac{\rho}{2}\right)^2\left(1+\frac{\rho}{2}\right)\rho f^{\prime\prime} \\
& \quad + \left(2(k-1)\left(1-\frac{\rho}{2}\right)^2\left(1+\frac{\rho}{2}\right) + (n+1)\rho\left(1-\frac{\rho}{2}\right)\left(1+\frac{\rho}{2}\right) - m\rho\left(1-\frac{\rho}{2}\right)^2\right)f^\prime \\
& \quad + \left(\left(1+\frac{\rho}{2}\right)\Delta + \frac{(n+1)(m+n+1-2k)}{4}\left(1-\frac{\rho}{2}\right)\left(1+\frac{\rho}{2}\right) - \frac{m(m+n+1-2k)}{4}\left(1-\frac{\rho}{2}\right)^2\right)f .
\end{align*}
Differentiating $\ell$ times in $\rho$ and evaluating at $\rho=0$ yields
\begin{align*}
0 & = 2(k-\ell-1)f^{(\ell+1)} + \left(\Delta + \ell(\ell-k+n+1-m) + \frac{(n+1-m)(m+n+1-2k)}{4}\right)f^{(\ell)} \\
& \quad + \frac{\ell}{2}\left(\Delta + (\ell-1)(\ell-k-1+2m) + \frac{m(m+n+1-2k)}{2}\right)f^{(\ell-1)} \\
& \quad + \frac{\ell(\ell-1)}{4}\left((\ell-2)(k-\ell+1-m-n) - \frac{(m+n+1)(m+n+1-2k)}{4}\right)f^{\ell-2} .
\end{align*}
Denote by $(a)_k=a(a+t)\dotsm(a+k-1)=\frac{\Gamma(a+k)}{\Gamma(a)}$.  Multiplying the above equation by $\frac{1}{2}(k-\ell)_\ell$ gives the equivalent formula
\begin{align*}
0 & = (k-\ell-1)_{\ell+1}f^{(\ell+1)} \\
& \quad + \left(\frac{1}{2}\Delta - \frac{\ell(k-\ell)}{2} + \frac{(n+1-m)(m+n+1-2k+4\ell)}{8}\right)(k-\ell)_\ell f^{(\ell)} \\
& \quad + \frac{\ell(k-\ell)}{2}\left(\frac{1}{2}\Delta - \frac{(\ell-1)(k-\ell+1)}{2} + \frac{m(m+n+1-2k+4\ell-4)}{4}\right)(k-\ell+1)_{\ell-1}f^{(\ell-1)} \\
& \quad + \frac{(\ell-1)_2(k-\ell)_2}{8}\left((\ell-2)(k-\ell+2) - \frac{(m+n+1)(m+n+1-2k+4\ell-8)}{4}\right)(k-\ell+2)_{\ell-2}f^{(\ell-2)} .
\end{align*}
In other words, we see that
\begin{equation}
\label{eqn:formula}
(k-\ell)_\ell f^{(\ell)} = q_\ell f
\end{equation}
for $q_\ell$ defined by $q_{-2}=q_{-1}=0$, $q_0=1$, and
\begin{equation}
\label{eqn:q_recursive}
\begin{split}
q_{\ell+1} & = \left(-\frac{1}{2}\Delta + \frac{\ell(k-\ell)}{2} - \frac{(n+1-m)(m+n+1-2k+4\ell)}{8}\right)q_{\ell} \\
& \quad + \frac{\ell(k-\ell)}{2}\left(-\frac{1}{2}\Delta + \frac{(\ell-1)(k-\ell+1)}{2} - \frac{m(m+n+1-2k+4\ell-4)}{4}\right)q_{\ell-1} \\
& \quad + \frac{(\ell-1)_2(k-\ell)_2}{4}\left(-\frac{(\ell-2)(k-\ell+2)}{2} + \frac{(m+n+1)(m+n+1-2k+4\ell-8)}{8}\right)q_{\ell-2} .
\end{split}
\end{equation}
Via a straightforward but tedious induction argument, we find that
\begin{equation}
\label{eqn:q_formula}
\begin{split}
q_\ell & = \sum_{j=0}^\ell\bigg[\binom{\ell}{j}2^{j-\ell}(k-\ell)_{\ell-j}\left(\frac{m+n+1-2k+4j}{2}\right)_{\ell-j} \\
& \qquad \times \prod_{i=1}^{j}\left(-\frac{1}{2}\Delta - \frac{(n-m+2k-4i+3)(m+n-2k+4i-3)}{8}\right)\bigg] .
\end{split}
\end{equation}
On the other hand, \eqref{eqn:formula} implies that the obstruction to finding the harmonic extension $U$ of $u$ is
\[ q_k = \prod_{i=1}^{k}\left(-\frac{1}{2}\Delta - \frac{(n-m+2k-4i+3)(m+n-2k+4i-3)}{8}\right) . \]
Since $2^kq_k$ has leading order term $(-\Delta)^k$, this finishes the proof.
\end{proof}

\bibliographystyle{abbrv}
\bibliography{../bib}
\end{document}